\newcolumntype{L}{>{$}l<{$}}
\setlist[itemize]{leftmargin=*}
\setlist[enumerate]{leftmargin=*}
\def\lpr{\left\{}
\def\rpr{\right\}}
\newtheorem{claim}{Claim}
\newtheorem{theorem}{Theorem}
\newtheorem{definition}{Definition}
\newtheorem{lemma}{Lemma}
\DeclareMathOperator*{\argmax}{argmax}
\newcommand{\mA}{\mathcal{A}}
\newcommand{\mX}{\mathcal{X}}
\newcommand{\mN}{\mathcal{N}}
\newcommand{\upi}{\underline{\pi}}
\newcommand{\uPi}{\underline{\Pi}}
\renewcommand{\hat}{\widehat}
\renewcommand{\bar}{\overline}
\newcommand{\ux}{\underline{x}}
\renewcommand{\star}{\ast}
\newcommand{\uF}{\underline{F}}
\newcommand{\mP}{\mathbb{P}}
\newcommand{\mE}{\mathbb{E}}	
\newcommand{\eq}[1]{\begin{align}#1\end{align}}
\newcommand{\seq}[1]{\begin{subequations}#1\end{subequations}}
\newcommand{\lb}[1]{\left\{ \begin{array}{ll} #1 \end{array} \right.}
\newcommand{\E}{\mathbb{E}}
 \newcommand{\nn}{\nonumber}
\newcommand{\cX}{\mathcal{X}}
\newcommand{\cA}{\mathcal{A}}
\newcommand{\cB}{\mathcal{B}}
\newcommand{\cC}{\mathcal{C}}
\newcommand{\cT}{\mathcal{T}}
\newcommand{\cH}{\mathcal{H}}
\newcommand{\cN}{\mathcal{N}}
\newcommand{\tgamma}{\tilde{\gamma}}
\newcommand{\defeq}{\buildrel\triangle\over =}
\newcommand{\pushright}[1]{\ifmeasuring@ #1 \else\omit\hfill$\displaystyle#1$\fi\ignorespaces}
\newcommand{\pushleft}[1]{\ifmeasuring@ #1 \else\omit$\displaystyle#1$\hfill\fi\ignorespaces}
\newcommand{\hs}[1]{\hspace{#1}}
\newcommand{\lp}{\left.}
\newcommand{\rp}{\right.}
\begin{document}
	%
	\title{A systematic process for evaluating structured perfect Bayesian equilibria in dynamic games with asymmetric information }
	%
	%
	%
	\author{Deepanshu~Vasal, Abhinav~Sinha and Achilleas~Anastasopoulos
	\thanks{The authors are with the Department
	of Electrical Engineering and Computer Science, University of Michigan, Ann
	Arbor, MI, 48105 USA e-mail: {\{dvasal, absi, anastas\}@umich.edu}.}
	\thanks{This work is supported in part by NSF grants CIF-1111061 and ECCS-1608361.}
	\thanks{This paper has originally appeared on arXiv.org on August 26, 2015 as working paper 1508.06269 and revised on September 14, 2016 as 1609.04221.}
	}
	
	
	%
	\maketitle
	\vspace{-1cm}
\begin{abstract}
We consider both finite-horizon and infinite-horizon versions of a dynamic game with $N$ selfish players who observe their types privately and take actions that are publicly observed. Players' types evolve as conditionally independent Markov processes, conditioned on their current actions. Their actions and types jointly determine their instantaneous rewards. In dynamic games with asymmetric information a widely used concept of equilibrium is perfect Bayesian equilibrium (PBE), which consists of a strategy and belief pair that simultaneously satisfy sequential rationality and belief consistency. In general, there does not exist a universal algorithm that decouples the interdependence of strategies and beliefs over time in calculating PBE.
In this paper, for the finite-horizon game with independent types we develop a two-step backward-forward recursive algorithm that sequentially decomposes the problem (w.r.t. time) to obtain a subset of PBEs, which we refer to as \textit{structured Bayesian perfect equilibria} (SPBE). In such equilibria, a player's strategy depends on its history only through a common public belief and its current private type. The backward recursive part of this algorithm defines an equilibrium generating function. Each period in the backward recursion involves solving a fixed-point equation on the space of probability simplexes for every possible belief on types. Using this
function, equilibrium strategies and beliefs are generated through a forward recursion.
We then extend this methodology to the infinite-horizon model, where we propose a time-invariant single-shot fixed-point equation, which in conjunction with a forward recursive step, generates the SPBE.
Sufficient conditions for the existence of SPBE are provided. 
With our proposed method, we find equilibria that exhibit \emph{signaling} behavior. 
This is illustrated with the help of a concrete public goods example. 

\end{abstract}

\begin{IEEEkeywords}
	Dynamic games, asymmetric information, perfect Bayesian equilibrium, sequential decomposition, dynamic programming, signaling.
\end{IEEEkeywords}

\section{Introduction}
Several practical applications involve dynamic interaction of strategic decision-makers with private and public observations.
Such applications include repeated online advertisement auctions, wireless resource sharing,  and energy markets.
In repeated online advertisement auctions, advertisers place bids for locations on a website to sell a product. These bids are calculated based on the value of that product, which is privately observed by the advertiser and past actions of other advertisers, which are observed publicly. Each advertiser's goal is to maximize its reward, which for an auction depends on the actions taken by others. In wireless resource sharing, players are allocated channels that interfere with each other. Each player privately observes its channel gain and takes an action, which can be the choice of modulation or coding scheme and also the transmission power. The reward it receives depends on the rate the player gets, which is a function of each player's channel gain and other players' actions (through the signal-to-interference ratio).
Finally, in an energy market, different suppliers bid their estimated power outputs to an independent system operator (ISO) that formulates the market mechanism to determine the prices assessed to the different suppliers. Each supplier wants to maximize its return, which depends on its cost of production of energy, which is its private information, and the market-determined prices which depend on all the bids.

Dynamical systems with strategic players are modeled as dynamic stochastic games, introduced by Shapley in~\cite{Sh53}. Discrete-time dynamic games with Markovian structure have been studied extensively to model many practical applications, in engineering as well as economics literature~\cite{BaOl98, FiVr12}. In dynamic games with perfect and symmetric information, subgame perfect equilibrium (SPE) is an appropriate equilibrium concept and there exists a backward recursive algorithm to find all the SPEs of these games (refer to~\cite{OsRu94, FuTi91book, samuelson2006} for a more elaborate discussion). Maskin and Tirole in \cite{MaTi01} introduced the concept of Markov perfect equilibrium (MPE) for dynamic games with symmetric information, where equilibrium strategies are dependent on some payoff relevant Markovian state of the system, rather than on the entire history. This is a refinement of the SPE. Some prominent examples of the application of MPE include~\cite{ErPa95, BeVa96, AcRo01}. Ericson and Pakes in~\cite{ErPa95} model industry dynamics for firms' entry, exit and investment participation, through a dynamic game with symmetric information, compute its MPE, and prove ergodicity of the equilibrium process. Bergemann and V{\" a}lim{\"a}ki in~\cite{BeVa96} study a learning process in a dynamic oligopoly with strategic sellers and a single buyer, allowing for price competition among sellers. They study MPE of the game and its convergence behavior. Acemo\u{g}lu and Robinson in~\cite{AcRo01} develop a theory of political transitions in a country by modeling it as a repeated game between the elites and the poor, and study its MPE.

In dynamic games with asymmetric information, and more generally in multi-player, dynamic decision problems (cooperative or non-cooperative) with asymmetric information, there is a signaling phenomenon that can occur, where a player's action reveals part of its private information to other players, which in turn affects their future payoff (see~\cite{KrSo94} for a survey of signaling models).\footnote{There are however instances where even though actions reveal private information, at equilibrium the signaling effect is non-existent~\cite{DoPa07,AlKaSi09} and~\cite[sec III.A]{NaGuLaBa14}. Thus, MPE is an appropriate equilibrium concept for such games. In~\cite{DoPa07}, authors extend the model of~\cite{ErPa95} where firms' set-up costs and scrap values are random and their private information. However, these are assumed to be i.i.d. across time and thus the knowledge of this private information in any period does not affect the future reward. In\cite{AlKaSi09},\cite[sec III.A]{NaGuLaBa14}, authors discuss games with one-step delayed information pattern, where all players get access to players' private information with delay one. In this case as well, signaling does not occur.} In one of the first works demonstrating signaling, a two-stage dynamic game was considered by Spence~\cite{Sp73}, where a worker signals her abilities to a potential employer using the level of education as a signal. Since then, this phenomenon has been shown in many settings, e.g., warranty as a signal for better quality of a product, in~\cite{Gr81}, larger deductible or partial insurance as a signal for better health of a person, in~\cite{Wi77,RoSt76}, and in evolutionary game theory, extra large antlers by a deer to signal fitness to a potential mate, in~\cite{Za75}.

For dynamic games with asymmetric information, where players' observations belong to different information sets, in order to calculate expected future rewards players need to form a belief on the observations of other players (where players need not have consistent beliefs). As a result, SPE or MPE,~\footnote{SPE and MPE are used for games where beliefs in the game are strategy-independent and consistent among players. Equivalently, these beliefs are derived from basic parameters of the problems and are not part of the definition of the equilibrium concept.} are not appropriate equilibrium concepts for such settings. There are several notions of equilibrium for such games, such as perfect Bayesian equilibrium (PBE), sequential equilibrium, and trembling hand equilibrium~\cite{OsRu94,FuTi91book}. Each of these equilibrium notions consist of an assessment, i.e., a strategy and a belief profile for the entire time horizon. The equilibrium strategies are optimal given the equilibrium beliefs and the equilibrium beliefs are derived from the equilibrium strategy profile using Bayes' rule (whenever possible), with some equilibrium concepts requiring further refinements. Thus there is a cyclical requirement of beliefs being consistent with strategies, which are in turn optimal given the beliefs, and finding such equilibria can be thought of as being equivalent to solving a fixed point equation in the space of strategy and belief profiles over the entire time horizon. Furthermore, these strategies and beliefs are functions of histories and thus their domain grows exponentially in time, which makes the problem computationally intractable. To date, there is no universal algorithm that provides simplification by decomposing the aforementioned fixed-point equation for calculating PBEs.

Some practically motivated work in this category is the work in~\cite{Ba92,BiHiWe92,SmSo02,DePeSi15}. Authors in \cite{Ba92,BiHiWe92,SmSo02} study the problem of social learning with sequentially-acting selfish players who act exactly once in the game and make a decision to adopt or reject a trend based on their estimate of the system state. Players observe a private signal about the system state and publicly observe actions of past players. The authors analyze PBE of the dynamic game and study the convergent behavior of the system under an equilibrium, where they show occurrence of herding. Devanur et al. in~\cite{DePeSi15} study PBE of a repeated sales game where a single buyer has a valuation of a good, which is its private information, and a seller offers to sell a fresh copy of that good in every period through a posted price.
\vspace{-0.1cm}

\subsection{Contributions}
In this paper, we present a sequential decomposition methodology for calculating a subset of all PBEs for finite and infinite horizon dynamic games with asymmetric information.
Our model, consists of strategic players having types that evolve as conditionally independent Markov controlled processes. Players observe their types privately and actions taken by all  players are observed publicly. Instantaneous reward for each player depends on everyone's types and actions. The proposed methodology provides a decomposition of the interdependence between beliefs and strategies in PBE and enables a systematic evaluation of a subset of PBE, namely \textit{structured perfect Bayesian equilibria} (SPBE).
Furthermore, we show that all SPBE can be computed using this methodology.
Here SPBE are defined as equilibria with players strategies based on their current private type and a set of beliefs on each player's current private type, which is common to all the players and whose domain is time-invariant. The beliefs on players' types are such that they can be updated individually for each player and sequentially w.r.t. time. The model allows for signaling amongst players as beliefs depend on strategies.

Our motivation for considering SPBE stems from ideas in decentralized team problems and specifically the works of Ho~\cite{Ho80} and Nayyar et al.~\cite{NaMaTe11}. We utilize the agent-by-agent approach in~\cite{Ho80} to motivate a Markovian structure where players' strategies depend only on their current types. In addition, we utilize the common information based approach introduced in~\cite{NaMaTe11} to summarize the common information into a common belief on players' private types. Even though these ideas motivate the special structure of our equilibrium strategies, they can not be applied in games to evaluate SPBE because they have been developed for dynamic teams and are incompatible with equilibrium notions. Our main contribution is a new construction based on which SPBE can be systematically evaluated.

Specifically, for the finite horizon model, we provide a two-step algorithm involving a backward recursion followed by a forward recursion. The algorithm works as follows. In the backward recursion, for every time period, the algorithm finds an equilibrium generating function defined for all possible common beliefs at that time. This involves solving an one-step fixed point equation on the space of probability simplexes. Then, the equilibrium strategies and beliefs are obtained through a forward recursion using the equilibrium generating function obtained in the backward step and the Bayes update rule. The SPBE that are developed in this paper are analogous to MPEs (for games with symmetric information) in the sense that players choose their actions based on beliefs that depend on common information, and private types, both of which have Markovian dynamics.

For the infinite horizon model, instead of the backwards recursion step, the algorithm solves a single-shot time invariant fixed-point equation involving both an equilibrium generating function and an equilibrium reward-to-go function.
We show that using our method, existence of SPBE in the asymmetric information dynamic game is guaranteed if the aforementioned fixed-point equation admits a solution. We provide sufficient conditions under which this is true.  	
We demonstrate our methodology of finding SPBE through a multi-stage public goods game, whereby we observe the aforementioned signaling effect at equilibrium.
\vspace{-0.2cm}
\subsection{Relevant Literature}
Related literature on this topic include \cite{NaGuLaBa14, GuNaLaBa14} and \cite{OuTaTe15}. Nayyar et al. in \cite{NaGuLaBa14, GuNaLaBa14} consider dynamic games with asymmetric information. There is an underlying controlled Markov process and players jointly observe part of the process and whilst making additional private observations. It is shown that the considered game with asymmetric information, under certain assumptions, can be transformed to another game with symmetric information. A backward recursive algorithm is provided to find MPE of the transformed game. For this strong equivalence to hold, authors in \cite{NaGuLaBa14, GuNaLaBa14} make a critical assumption in their model: based on the common information, a player's posterior beliefs about the system state and about other players' information are independent of the past strategies used by the players. This leads to all strategies being non-signaling. Our model is different from this since we assume that the underlying state of the system has independent components, each constituting a player's private type. However, we do not make any assumption regarding update of beliefs and allow the belief state to depend on players' past strategies, which in turn allows the possibility of signaling in the game.

Ouyang et al. in \cite{OuTaTe15} consider a dynamic oligopoly game with strategic sellers and buyers. Each seller privately observes the valuation of their good, which is assumed to have independent Markovian dynamics, thus resulting in a dynamic game of asymmetric information. The common belief is strategy dependent and the authors consider equilibria based on this common information belief.
%
%
It is shown that if all other players play actions based on the common belief and their private information using equilibrium strategies, and if all players use equilibrium belief update function, then player $i$ faces a Markov decision process (MDP) with respect to its action with state being the common belief and its private type.
Thus calculating equilibrium boils down to solving a fixed-point equation on belief update functions and strategies of all players. Existence of such equilibrium is shown for a degenerate case where players act myopically at equilibrium and the equilibrium itself is non-signaling.

Other than the common information based approach, Li et al.~\cite{shamma14} consider a finite horizon zero-sum dynamic game, where at each time only one player out of the two knows the state of the system. The value of the game is calculated by formulating an appropriate linear program. Cole et al.~\cite{cole2001} consider an infinite horizon discounted reward dynamic game where actions are only privately observable. They provide a fixed-point equation for calculating a subset of sequential equilibrium, which is referred to as Markov private equilibrium (MPrE). In MPrE strategies depend on history only through the latest private observation.
\vspace{-0.2cm}
\subsection{Notation}
We use uppercase letters for random variables and lowercase for their realizations. For any variable, subscripts represent time indices and superscripts represent player indices. We use notation $ -i$ to represent all players other than player $i$ i.e. $ -i = \{1,2, \ldots i-1, i+1, \ldots, N \}$. We use notation $A_{t:t'}$ to represent the vector $(A_t, A_{t+1}, \ldots A_{t'})$ when $t'\geq t$ or an empty vector if $t'< t$. We use $A_t^{-i}$ to mean $(A^1_t, A^2_{t}, \ldots, A_t^{i-1}, A_t^{i+1} \ldots, A^N_{t})$ . We remove superscripts or subscripts if we want to represent the whole vector, for example $ A_t$  represents $(A_t^1, \ldots, A_t^N) $. In a similar vein, for any collection of sets $(\cX^i)_{i \in \cN}$, we denote $\times_{i\in\cN} \cX^i$ by $\cX$. We denote the indicator function of a set $A$ by $I_{A}(\cdot)$.
For any finite set $\mathcal{S}$, $\Delta(\mathcal{S})$ represents the space of probability measures on $\mathcal{S}$ and $|\mathcal{S}|$ represents its cardinality. We denote by $\mP^g$ (or $\mE^g$) the probability measure generated by (or expectation with respect to) strategy profile $g$. We denote the set of real numbers by $\mathbb{R}$. For a probabilistic strategy profile of players $(\beta_t^i)_{i\in \cN}$ where the probability of action $a_t^i$ conditioned on $a_{1:t-1},x_{1:t}^i$ is given by $\beta_t^i(a_t^i|a_{1:t-1},x_{1:t}^i)$, we use the notation $\beta_t^{-i}(a_t^{-i}|a_{1:t-1},x_{1:t}^{-i})$ to represent $\prod_{j\neq i} \beta_t^j(a_t^j|a_{1:t-1},x_{1:t}^j)$.
All equalities/inequalities involving random variables are to be interpreted in the \emph{a.s.} sense.
For mappings with range function sets $f: \cA \rightarrow (\cB \rightarrow \cC)$ we use square brackets $f[a] \in \cB \rightarrow \cC$ to denote the image of $a\in\cA$ through $f$ and parentheses $f[a](b) \in \cC$ to denote the image of $b\in\cB$ through $f[a]$.
A controlled Markov process with state $X_t$, action $A_t$, and horizon $\cT$ is denoted by $(X_t,A_t)_{t\in\cT}$.

The paper is organized as follows. In Section~\ref{sec:Model}, we present the model for games with finite and infinite horizon.
Section~\ref{sec:StructuralResults} serves as motivation for focusing on SPBE.
In Section~\ref{sec:fh}, for finite-horizon games, we present a two-step backward-forward recursive algorithm to construct a strategy profile and a sequence of beliefs, and show that it is a PBE of the dynamic game considered.  In Section~\ref{secih}, we extend that methodology to infinite-horizon games. Section~\ref{secexample} discusses concrete example of a public goods game with two players and results are presented for both, finite and infinite horizon versions of the example. All proofs are provided in appendices.
\section{Model and Preliminaries}
\label{sec:Model}
We consider a discrete-time dynamical system with $N$ strategic players in the set $ \cN \defeq \{1,2, \ldots N \}$. {We consider two cases: finite horizon $\mathcal{T} \defeq \{1, 2, \ldots T\}$ with perfect recall and infinite horizon with perfect recall}. The system state is $X_t \defeq (X_t^1, X_t^2, \ldots X_t^N)$, where $X_t^i \in \cX^i$ is the type of player $i$ at time $t$, which is perfectly observed and is its private information. Players' types evolve as conditionally independent, controlled Markov processes such that
\seq{
\eq{
\mP(x_1) &= \prod_{i=1}^N Q^i_1(x_1^i)\\
\mP(x_t|x_{1:t-1}, a_{1:t-1}) &= \mP(x_t|x_{t-1} , a_{t-1})\\
&= \prod_{i=1}^N Q_t^{i}(x_t^i|x_{t-1}^i, a_{t-1}),
}
}
where $Q^i_t$ are known kernels. Player $i$ at time $t$ takes action $a_t^i \in \cA^i$ on observing the actions {$a_{1:t-1} = (a_k)_{k=1,\ldots,t-1}$ where $ a_k = \left( a_k^j \right)_{j \in \mN} $}, which is common information among players, and the types $x_{1:t}^i$, which it observes privately. The sets $\cA^i, \cX^i $ are assumed to be finite. Let $g^i = ( g^i_t)_{t \in \mathcal{T}}$ be a probabilistic strategy of player $i$  where $g^i_t : \cA^{t-1}\times (\cX^{i})^t \to \Delta(\cA^i)$ such that player $i$ plays action $A_t^i$ according to $ A_t^i \sim g^i_t(\cdot|a_{1:t-1},x_{1:t}^i)$. Let $ g \defeq(g^i)_{i\in \cN}$ be a strategy profile of all players. At the end of interval $t$, player $i$ receives an instantaneous reward $R_t^i(x_t,a_t)$. {To preserve the information structure of the problem, we assume that players do not observe their rewards until the end of game.\footnote{Alternatively, we could have assumed instantaneous reward of a player to depend only on its own type, i.e. be of the form $R_t^i(x_t^i,a_t)$, and have allowed rewards to be observed by the players during the game as this would not alter the information structure of the game} The reward functions and state transition kernels are common knowledge among the players.} For the finite-horizon problem, the objective of player $i$ is to maximize its total expected reward
\eq{ J^{i,g} \defeq \E^g \left\{ \sum_{t=1}^T R_t^i(X_t,A_t) \right\}.
}
For the infinite-horizon case, the transition kernels $ Q_t^i $ are considered to not depend on time $ t $. We also substitute $ R_t^i(X_t,A_t) = \delta^t R^i(X_t,A_t) $ take $ \lim_{T \rightarrow \infty} $ in the above equation, where $ \delta \in [0,1) $ is the common discount factor and $ R^i $ is the time invariant stage reward function for player $ i $.
With all players being strategic, this problem is modeled as a dynamic game, $\mathfrak{D}_T$ for finite horizon and $ \mathfrak{D}_\infty $ for infinite horizon, with asymmetric information and simultaneous moves.

\subsection{Preliminaries}
\label{sec:Result_A}
Any history of this game at which players take action is of the form $h_t = (a_{1:t-1},x_{1:t})$. Let $\mathcal{H}_t$ be the set of such histories, $\mathcal{H}^T \defeq \cup_{t=0}^T \mathcal{H}_t $ be the set of all possible such histories in finite horizon and $\mathcal{H}^\infty \defeq \cup_{t=0}^\infty \mathcal{H}_t $ for infinite horizon. At any time $t$ player $i$ observes $h^i_t = (a_{1:t-1},x_{1:t}^{i})$ and all players together have $h^c_t = a_{1:t-1}$ as common history. Let $\mathcal{H}^i_t$ be the set of observed histories of player $i$ at time $t$ and $\mathcal{H}^c_t$ be the set of common histories at time $t$. An appropriate concept of equilibrium for such games is PBE \cite{FuTi91book}, which consists of a pair $(\beta^*,\mu^*)$ of strategy profile $\beta^* = (\beta_t^{*,i})_{t \in \mathcal{T},i\in \cN}$ where $\beta_t^{*,i} : \mathcal{H}_t^i \to \Delta(\cA^i)$ and a belief profile $\mu^* = (^i\mu_t^{*})_{t \in \mathcal{T},i\in \cN}$ where $^i\mu_t^{*}: \mathcal{H}^i_t \to \Delta(\mathcal{H}_t)$ that satisfy sequential rationality so that $\forall i \in \cN,  t \in \mathcal{T},  h^{i}_t \in \mathcal{H}^i_t, {\beta^{i}}$
\begin{equation}
W_t^{i,\beta^{*,i},T}(h_t^i) \ge W_t^{i,\beta^i,T}(h_t^i)
\end{equation}
where the reward-to-go  is defined as
\begin{equation}
\hs{-0.2cm}W_t^{i,\beta^i,T}(h_t^i) \triangleq \E^{{\beta}^{i} \beta^{*,-i},\, ^i\mu_t^*[h_t^i]}\left\{ \sum_{n=t}^T R_n^i(X_n, A_n)\big\lvert  h^i_t\right\}, \;\;   \label{eq:seqeq}
\end{equation}
and the beliefs satisfy some consistency conditions as described in~\cite[p. 331]{FuTi91book}. Similarly, for the game $ \mathfrak{D}_\infty $ PBE $(\beta^\star,\mu^\star)$ requires: $\forall i \in \cN,  t \ge 1,  h^{i}_t \in \mathcal{H}^i_t, {\beta^{i}}$
\begin{equation}
W_t^{i,\beta^{*,i}}(h_t^i) \ge W_t^{i,\beta^i}(h_t^i)
\end{equation}
where the reward-to-go is
\begin{equation}
	W_t^{i,\beta^i}(h_t^i) \triangleq \E^{{\beta}^{i} \beta^{*,-i},\, ^i\mu_t^*[h_t^i]}\left\{ \sum_{n=t}^\infty R_n^i(X_n, A_n)\big\lvert  h^i_t\right\}.   \label{eq:seqeqinf}
\end{equation}
In general, a belief for player $i$ at time $t$, $^i\mu_t^{*}$ is defined on history $h_t = (a_{1:t-1},x_{1:t}) $ given its private history $h^i_t = (a_{1:t-1},x_{1:t}^{i})$. Here player $i$'s private history $h^i_t=(a_{1:t-1},x_{1:t}^i)$ consists of a public part $h_t^c=a_{1:t-1}$ and a private part $x_{1:t}^i$.
At any time $t$, the relevant uncertainty player $i$ has is about other players' types $x_{1:t}^{-i} \in \times_{n=1}^t \left( \times_{j \ne i} \mX^j \right)$  and their future actions.
In our setting, due to independence of types, and given the common history $h_t^c$, player $i$'s type history $x_{1:t}^i$ does not provide any additional information about $x_{1:t}^{-i}$, as will be shown later. For this reason we consider beliefs that are functions of each player's history $h^i_t$ only through the common history $h^c_t$.
Hence, for each player $i$, its belief for each history $h^c_t=a_{1:t-1}$ is derived from a common belief $\mu^*_t[a_{1:t-1}]$. Furthermore, as will be shown later, this belief factorizes into a product of marginals $\prod_{j\in\cN} \mu^{*,j}_t[a_{1:t-1}] $.
Thus we can sufficiently use the system of beliefs, $\mu^*=(\underline{\mu}^*_t)_{t\in\mathcal{T}}$, where  $\underline{\mu}^*_t = (\mu^{*,i}_t)_{i \in \mN} $, and $\mu^{*,i}_t: \mathcal{H}^c_t \to \Delta(\cX^i)$, with the understanding that player $i$'s belief on $x_t^{-i}$ is  $\mu^{*,-i}_t[a_{1:t-1}](x_t^{-i})=\prod_{j\neq i} \mu^{*,j}_t[a_{1:t-1}](x_t^j)$.
Under the above structure, all consistency conditions that are required for PBEs~\cite[p. 331]{FuTi91book} are automatically satisfied.


\section{Motivation for structured equilibria}
\label{sec:StructuralResults}
In this section, we present structural results for the considered dynamical process that serve as a motivation for finding SPBE of the underlying game $\mathfrak{D}_T$.
Specifically, we define a belief state based on common information history and show that any reward profile that can be obtained through a general strategy profile can also be obtained through strategies that depend on this belief state and players' current types, which are their private information. These structural results are inspired by the analysis of decentralized team problems, which serve as guiding principles to design our equilibrium strategies. While these structural results provide intuition and the required notation, they are not directly used in the proofs for finding SPBE later in Section~\ref{sec:Result}.

At any time $t$, player $i$ has information $(a_{1:t-1}, x_{1:t}^{i})$ where $a_{1:t-1}$ is the common information among players, and $x_{1:t}^{i}$ is the private information of player $i$. Since $(a_{1:t-1}, x_{1:t}^{i})$ increases with time, any strategy of the form $A_t^i \sim g^i_t(\cdot|a_{1:t-1},x_{1:t}^{i})$ becomes unwieldy. Thus it is desirable to have an information state in a time-invariant space that succinctly summarizes $(a_{1:t-1}, x_{1:t}^{i})$, and that can be sequentially updated. We first show in Lemma~\ref{fact:G2S} that given the common information $a_{1:t-1}$ and its current type $x_t^i$, player $i$ can discard its type history $x_{1:t-1}^i$ and play a strategy of the form $A_t^i \sim s^i_t(\cdot|a_{1:t-1},x_{t}^{i})$. Then in Lemma~\ref{fact:L1}, we show that $a_{1:t-1}$ can be summarized through a belief $\pi_t$, defined as follows. For any strategy profile $g$, belief $\pi_t $ on $X_t$,  $\pi_t \in \Delta (\cX)$,  is defined as $\pi_t (x_t) \defeq \mP^g(X_t=x_t|a_{1:t-1}), \; \forall x_t\in \cX$. We also define the marginals $\pi_t^{{i}} (x_t^i) \defeq \mP^g(X_t^i=x_t^i|a_{1:t-1}), \; \forall x_t^i \in \cX^i$.

For player $i$, we use the notation $g$ to denote a general policy of the form $A_t^i\sim g_t^i(\cdot|a_{1:t-1}, x_{1:t}^{i})$, notation $s$, where $s_t^i: \cA^{t-1} \times \cX^i \to \Delta(\cA^i)$, to denote a policy of the form $A_t^i \sim s_t^i(\cdot|a_{1:t-1},x_t^i)$, and notation $m$, where $m_t^i: \Delta(\times_{i\in\cN} \cX^i) \times \cX^i \to \Delta(\cA^i)$, to denote a policy of the form $A_t^i \sim m_t^i(\cdot|\pi_t,x_t^i)$. It should be noted that since $\pi_t$ is a function of random variables $a_{1:t-1}$, $m$ policy is a special type of $s$ policy, which in turn is a special type of $g$ policy.

Using the agent-by-agent approach~\cite{Ho80}, we show in Lemma~\ref{fact:G2S} that any expected reward profile of the players that can be achieved by any general strategy profile $g$ can also be achieved by a strategy profile $s$.
\begin{lemma}
Given a fixed strategy $g^{-i}$ of all players other than player $i$ and for any strategy $g^i$ of player $i$, there exists a strategy $s^i$ of player $i$ such that $\forall t \in \mathcal{T}, x_t\in \cX, a_t\in \cA,$
\eq{\mP^{s^i g^{-i}}(x_t, a_t) &= \mP^{g^ig^{-i}}(x_t, a_t) \;\;\;\;\;
}
which implies $ J^{i,s^ig^{-i}} = J^{i,g^ig^{-i}}$.\label{fact:G2S}
\end{lemma}
\begin{IEEEproof}
	Please see Appendix~\ref{app:A1}.
\end{IEEEproof}
Since any $s^i$ policy is also a $g^i$ type policy, the above lemma can be iterated over all players which implies that for any $g$ policy profile there exists an $s$ policy profile that achieves the same reward profile i.e., $(J^{i,s})_{i\in \cN} = (J^{i,g})_{i\in \cN}$.

Policies of types $s$ still have increasing domain due to increasing common information $a_{1:t-1}$. In order to summarize this information, we take an equivalent view of the system dynamics through a common agent, as taken in \cite{nayyar2013}. The common agent approach is a general approach that has been used extensively in dynamic team problems \cite{Ma13, MaTe08, Nate08, VaAn14}. Using this approach, the problem can be equivalently described as follows: player $i$ at time $t$ observes $a_{1:t-1}$ and takes action $\gamma_t^i$, where $\gamma_t^i :  \cX^i \to \Delta(\cA^i)$ is a partial (stochastic) function from its private information $x_t^i$ to  $a_t^i$, of the form $A_t^i \sim \gamma_t^i(\cdot|x_t^i)$. These actions are generated through some policy $\psi^i = (\psi^i_t)_{t \in \mathcal{T}}$, $\psi^i_t : \cA^{t-1} \to \left\{  \cX^i \to \Delta(\cA^i) \right\}$, that operates on the common information $a_{1:t-1}$ such that $\gamma_t^i = \psi_t^i[a_{1:t-1}]$. Then any policy of the form $A_t^i \sim s_t^i(\cdot|a_{1:t-1},x_t^{i})$ is equivalent to $A_t^i \sim \psi^i_t[a_{1:t-1}] (\cdot|x_t^i)$.

We call a player $i$'s policy through common agent to be of type $\psi^i$ if its actions $\gamma^i_t$ are taken as $\gamma^i_t = \psi^i_t[a_{1:t-1}]$. We call a player $i$'s policy through common agent to be of type $\theta^i$ where $\theta^i_t : \Delta(\cX) \to \left\{ \cX^i \to \Delta(\cA^i) \right\}$, if its actions $\gamma^i_t$ are taken as $\gamma^i_t =\theta^i_t[\pi_t]$. A policy of type $\theta^i$ is also a policy of type $\psi^i$. There is a one-to-one correspondence between policies of type $s^i$ and of type $\psi^i$ and  between policies of type $m^i$ and of type $\theta^i$. In summary, the notation for the various functional form of strategies is
\begin{subequations}
	\begin{align}
	&A_t^i \sim s_t^i(\cdot|a_{1:t-1},x_t^i) \qquad 	
	A_t^i \sim \psi^i_t[a_{1:t-1}] (\cdot|x_t^i),
	\\
	&A_t^i \sim m_t^i(\cdot|\pi_t,x_t^i) \qquad \quad
	A_t^i \sim \theta^i_t[\pi_t] (\cdot|x_t^i). \label{eq:m_policies}
	\end{align}
\end{subequations}

In the following lemma, we show that the space of profiles of type $s$ is outcome-equivalent to the space of profiles of type $m$.
\begin{lemma}
\label{fact:L1}
For any given strategy profile $s$ of all players, there exists a strategy profile $m$ such that
\eq{\mP^m(x_t, a_t) &= \mP^s(x_t, a_t) \;\;\;\;\forall t \in \mathcal{T}, x_t\in \cX, a_t\in \cA ,
}
which implies $ (J^{i,m})_{i\in \cN} = (J^{i,s})_{i\in \cN} $.
\label{fact:S2M}
\end{lemma}
\begin{IEEEproof}
Please see Appendix~\ref{app:A2}. 
\end{IEEEproof}

The above two lemmas show that any reward profile that can be generated through a policy profile of type $g$ can also be generated through a policy profile of type $m$.
This is precisely the motivation for using SPBE which are equilibria based on policies of type $m$.
It should be noted that the construction of $s^i$ depends only on $g^i$ (as shown in~\eqref{eq:defSi_main}), %
while the construction of $m^i$ depends on the whole policy profile $g$ and not just on $g^i$, since the construction of $\theta^i$ depends on $\psi$ in~\eqref{eq:defMi}. %
Thus any unilateral deviation of player $i$ in $g$ policy profile does not necessarily translate to unilateral deviation of player $i$ in the corresponding $m$ policy profile. Therefore $g$ being an equilibrium of the game (in some appropriate notion) does not necessitate the corresponding $m$ also being an equilibrium. Thus the set of equilibria of type $ g $ contains those of type $ m $ but not vice-versa (in general); characterizing the relationship between the two sets of equilibria is an interesting open problem.

We end this section by noting that although finding general PBEs of type $g$ of the games $\mathfrak{D}_T$ or $ \mathfrak{D}_\infty $ would be a desirable goal, since the space of strategies is growing exponentially with time, it would be computationally intractable.
However, Lemmas~1 and 2 suggest that strategies of type $m$ form a rich class that achieves every possible reward profile. Since these strategies are functions of beliefs $\pi_t$ that lie in a time-invariant space and are easily updatable, equilibria of this type are potential candidates for computation through backward recursion. Our goal is to devise an algorithm to find structured equilibria of type $m$ of the dynamic games $\mathfrak{D}_T$ or $ \mathfrak{D}_\infty $.

\begin{definition}[SPBE]
	A structured perfect Bayesian equilibrium is a PBE of the considered dynamic game where at any time $ t $, for any agent $ i $, its equilibrium strategy $ \beta_t^{\ast,i} $ is of type $ m $ (as in~\eqref{eq:m_policies}).
\end{definition}
\vspace{-0.2cm} 

\section{A Methodology for SPBE computation in finite horizon}
\label{sec:Result} \label{sec:fh}
In this section we consider the finite horizon dynamic game $ \mathfrak{D}_{T} $. In the previous section,  (specifically in Claim~\ref{claim:CondInd}, included in the proof of Lemma~\ref{fact:L1} in Appendix~\ref{app:A2}), it is shown that due to the independence of types and their evolution as independent controlled Markov processes, for any strategy of the players, the joint common belief can be factorized as a product of its marginals i.e., $\pi_t(x_t) = \prod_{i=1}^N \pi_t^{i}(x_t^i), \forall x_t$. Since in this paper, we only deal with such joint beliefs, to accentuate this independence structure, we define $\underline{\pi}_t \in \times_{i\in \cN} \Delta(\cX^i)$ as vector of marginal beliefs where $\underline{\pi}_t := (\pi^i_t)_{i\in \cN}$. In the rest of the paper, we will use $\underline{\pi}_t$ instead of $\pi_t$ whenever appropriate, where of course, $\pi_t$ can be constructed from $\underline{\pi}_t$. Similarly, we define the vector of belief updates as $\underline{F}(\underline{\pi},\gamma,a) := (F^i(\pi^i,\gamma^i,a))_{i \in \cN}$ where (using Bayes rule)
\eq{\label{eq:F_update}
F^i(\pi^i,\gamma^i,a)(x_{t+1}^i)=  \left\{
\begin{array}{ll}
\frac{  \sum_{x^i_t} \pi^{{i}}(x_t^i) \gamma^i(a^i|x_t^i)Q_t^i(x_{t+1}^i|x_t^i, a) }{ \sum_{\tilde{x}_t^i}\pi^{{i}}(\tilde{x}_t^i)  \gamma^i(a^i|\tilde{x}_t^i)}   &\mbox{if } \sum_{\tilde{x}_t^i}\pi^{{i}}(\tilde{x}_t^i)  \gamma^i(a^i|\tilde{x}_t^i) > 0 \\
\sum_{x_t^i}\pi^i(x_t^i)Q_t^i(x_{t+1}^i|x_t^i,a)  &\mbox{if } \sum_{\tilde{x}_t^i}\pi^{{i}}(\tilde{x}_t^i)  \gamma^i(a^i|\tilde{x}_t^i) = 0.
\end{array}
\right.
}
The update function $ F^i $ defined above depends on time $ t $ through the kernel $Q_t^i$ (for the finite horizon model). For notational simplicity we suppress this dependence on $ t $.
%
%
We also change the notation of policies of type $m$ and $\theta$ as follows, so they depend on $\upi_t$ instead of $\pi_t$
\begin{subequations}
\eq{
&m_t^i: \times_{i\in\cN} \Delta(\cX^i)\times\cX^i \to \Delta(\cA^i)
\\
&\theta^i_t : \times_{i\in\cN} \Delta(\cX^i) \to \left\{ \cX^i \to \Delta(\cA^i) \right\}.
}
\end{subequations}

In the following we present a backward-forward algorithm that evaluates SPBE. As will be shown in Theorem~\ref{thm:2}, this is a ``canonical'' methodology, in the sense that all SPBE can be generated this way.

\subsection{Backward Recursion} \label{sec:fhbr}

In this section, we define an equilibrium generating function $\theta=(\theta^i_t)_{i\in\cN,t\in\mathcal{T}}$, where $\theta^i_t : \times_{i\in\cN} \Delta(\cX^i) \to \left\{\cX^i \to \Delta(\cA^i) \right\}$. In addition, we define a sequence of reward-to-go functions of player $i$ at time $t$,  $(V_t^i)_{i\in \cN, t\in \{ 1,2, \ldots T+1\}}$, where $V_t^i : \times_{i\in\cN} \Delta(\cX^i) \times \cX^i \to \mathbb{R}$.
These quantities are generated through a backward recursive way, as follows.
\begin{itemize}
\item[1.] Initialize $\forall \underline{\pi}_{T+1}\in \times_{i\in\cN} \Delta(\cX^i), x_{T+1}^i\in \cX^i$,
\eq{
V^i_{T+1}(\underline{\pi}_{T+1},x_{T+1}^i) \defeq 0.   \label{eq:VT+1}
}

\item[2.] For $t = T,T-1, \ldots 1, \ \forall \underline{\pi}_t \in \times_{i\in\cN} \Delta(\cX^i), \pi_t = \prod_{i\in\cN}\pi_t^i $, let $\theta_t[\underline{\pi}_t] $ be generated as follows. Set $\tilde{\gamma}_t = \theta_t[\underline{\pi}_t]$, where $\tilde{\gamma}_t$ is the solution, if it exists,\footnote{The problem of existence in this step will be discussed in Section~\ref{sec:existence}.} of the following fixed-point equation, $\forall i \in \cN,x_t^i\in \cX^i$,
  \eq{
 \tilde{\gamma}^{i}_t(\cdot|x_t^i) \in \arg\max_{\gamma^i_t(\cdot|x_t^i)} &\E^{\gamma^i_t(\cdot|x_t^i) \tilde{\gamma}^{-i}_t,\,\pi_t} \left\{ R_t^i(X_t,A_t) + V_{t+1}^i (\uF(\underline{\pi}_t, \tilde{\gamma}_t, A_t), X_{t+1}^i) \big\lvert x_t^i \right\} , \label{eq:m_FP}
  }
 where expectation in \eqref{eq:m_FP} is with respect to random variables $(X_t^{-i},A_t, X_{t+1}^i)$ through the measure
$\pi_t^{-i}(x_t^{-i})\gamma^i_t(a^i_t|x_t^i) \tilde{\gamma}^{-i}_t(a^{-i}_t|x_t^{-i})Q_{t+1}^i(x_{t+1}^i|x_t^i,a_t)$ and $\uF$ is defined above.

 Furthermore, using the quantity $\tilde{\gamma}_t$ found above, define
\eq{
V^i_{t}(\underline{\pi}_t,x_t^i) \defeq & \E^{\tilde{\gamma}^{i}_t(\cdot|x_t^i) \tilde{\gamma}^{-i}_t,\, \pi_t}\left\{ {R}_t^i (X_t,A_t) + V_{t+1}^i (\uF(\underline{\pi}_t, \tilde{\gamma}_t, A_t), X_{t+1}^i)\big\lvert  x_t^i \right\}.  \label{eq:Vdef}
}
   \end{itemize}

It should be noted that in \eqref{eq:m_FP}, $\tilde{\gamma}_t^i$ is not the outcome of a maximization operation as is the case in a best response equation of a Bayesian Nash equilibrium. Rather \eqref{eq:m_FP} is a different fixed point equation. This is because the maximizer $\tilde{\gamma}^i_t$ appears in both, the left-hand-side and the right-hand-side of the equation (in the belief update $\uF(\underline{\pi}_t, \tilde{\gamma}_t, A_t)=(F^i(\pi^i_t,\tilde{\gamma}^i_t,A_t))_{i\in\cN}$).
This distinct construction
is pivotal in the proof of Theorem~\ref{Thm:Main}, as will be further elaborated in the Discussion section.


\vspace{-0.35cm}

\subsection{Forward Recursion}\label{sec:fr}


As discussed above, a pair of strategy and belief profile $(\beta^{*}, \mu^{*})$ is a PBE if it satisfies \eqref{eq:seqeq}. Based on $\theta$ defined above in \eqref{eq:VT+1}--\eqref{eq:Vdef}, we now construct a set of strategies $\beta^*$ and beliefs $\mu^*$ for the game $\mathfrak{D}_T$ in a forward recursive way, as follows.\footnote{As discussed in the preliminaries subsection on Section~\ref{sec:Model}, the equilibrium beliefs in SPBE, $\mu_t^*$ are functions of each player's history $h^i_t$ only through the common history $h^c_t$ and are the same for all players.} As before, we will use the notation $\underline{\mu}_t^*[a_{1:t-1}] := (\mu_t^{*,i}[a_{1:t-1}])_{i\in \cN}$, where $\mu_t^{*,{i}}[a_{1:t-1}]$ is a belief on $x_t^i$, and  $\mu_t^*[a_{1:t-1}]$ can be constructed from $\underline{\mu}_t^*[a_{1:t-1}]$ as $\mu_t^*[a_{1:t-1}](x_t) = \prod_{i=1}^N\mu_t^{*,i}[a_{1:t-1}](x_t^i),\; \forall a_{1:t-1}\in \mathcal{H}_t^c$.
\begin{itemize}
\item[1.] Initialize at time $t=1$,
\eq{
\mu^{*}_1[\phi](x_1) &:= \prod_{i=1}^N Q_1^i(x_1^i). \label{eq:mu*def0}
}
\item[2.] For $t =1,2 \ldots T, \forall i \in \cN, a_{1:t}\in \cH_{t+1}^c, x_{1:t}^i \in(\cX^i)^t$
\eq{
\beta_{t}^{*,i}(a_{t}^i|a_{1:t-1},x_{1:t}^i)&= \beta_{t}^{*,i}(a_{t}^i|a_{1:t-1},x_{t}^i)  \nn\\
&:= \theta_{t}^i[\underline{\mu}_{t}^*[a_{1:t-1}]](a^i_{t}|x_{t}^i) \label{eq:beta*def}
}
and
\eq{
\mu^{*,i}_{t+1}[a_{1:t}] &:= F^i(\mu_t^{*,i}[a_{1:t-1}], \theta_t^i[\underline{\mu}_t^*[a_{1:t-1}]], a_t) \label{eq:mu*def}
}

\end{itemize}
where $F^i$ is defined in~\eqref{eq:F_update}.

We now state our main result.

\begin{theorem}
\label{Thm:Main}
A strategy and belief profile $(\beta^*,\mu^*)$, constructed through the backward-forward recursion algorithm is a PBE of the game, i.e.,
$\forall i \in \cN,t \in \mathcal{T}, a_{1:t-1} \in \mathcal{H}_t^c, x_{1:t}^i \in (\cX^i)^t, \beta^i$,
\eq{
\E^{\beta_{t:T}^{*,i} \beta_{t:T}^{*,-i},\,\mu_{t}^{*}[a_{1:t-1}]} \left\{ \sum_{n=t}^T R_n^i(X_n,A_n) \big\lvert  a_{1:t-1}, x_{1:t}^i \right\} 
\geq
\E^{\beta_{t:T}^{i} \beta_{t:T}^{*,-i},\, \mu_{t}^{*}[a_{1:t-1}]} \left\{ \sum_{n=t}^T R_n^i(X_n,A_n) \big\lvert  a_{1:t-1}, x_{1:t}^i \right\}. \label{eq:prop}
}
\end{theorem}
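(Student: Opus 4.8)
The plan is to establish the sequential-rationality inequality (\ref{eq:prop}) by a backward induction on $t$ that ties the game's continuation payoffs to the value functions $V_t^i$ built in (\ref{eq:VT+1})--(\ref{eq:Vdef}). Fix a player $i$ and an arbitrary strategy $\beta^i$, and for each $t$ and each history $(a_{1:t-1},x_{1:t}^i)$ define the continuation reward
\[
W_t^i(a_{1:t-1},x_{1:t}^i)\defeq \E^{\beta_{t:T}^{i}\beta_{t:T}^{*,-i},\,\mu_t^{*}[a_{1:t-1}]}\Big\{\textstyle\sum_{n=t}^T R^i(X_n,A_n)\,\Big|\,a_{1:t-1},x_{1:t}^i\Big\}.
\]
I will prove the two-sided claim: for all $t$, all $(a_{1:t-1},x_{1:t}^i)$, and all $\beta^i$,
\[
W_t^i(a_{1:t-1},x_{1:t}^i)\;\le\; V_t^i(\underline{\mu}_t^{*}[a_{1:t-1}],x_t^i),\qquad\text{with equality when }\beta^i=\beta^{*,i}.
\]
Granting this, the left-hand side of (\ref{eq:prop}) equals $V_t^i(\underline{\mu}_t^{*}[a_{1:t-1}],x_t^i)$ by the equality case (with $\beta^i=\beta^{*,i}$), while the right-hand side is $W_t^i$ for some other $\beta^i$ and hence $\le V_t^i(\underline{\mu}_t^{*}[a_{1:t-1}],x_t^i)$ by the inequality case; this is exactly (\ref{eq:prop}).

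For the claim, the base case $t=T+1$ holds since both sides are $0$ by (\ref{eq:VT+1}). For the inductive step at time $t$, condition on $(a_{1:t-1},x_{1:t}^i)$, write $\underline{\pi}\defeq\underline{\mu}_t^{*}[a_{1:t-1}]$, $\pi\defeq\prod_{j\in\cN}\pi^j$, $\tilde{\gamma}_t\defeq\theta_t[\underline{\pi}]$. Two structural facts are needed. First, in the assessment $(\beta^*,\mu^*)$ the posterior on $X_t^{-i}$ at this information set is $\prod_{j\neq i}\mu_t^{*,j}[a_{1:t-1}](\cdot)=\pi^{-i}(\cdot)$ and does not depend on $x_{1:t}^i$; this is the product form built into $\underline{\mu}_t^*$ together with the conditional independence of the type processes given the common information, which is the factorization of Fact~\ref{fact:L1}/Claim~\ref{claim:C1} extended to the profile $\beta^i\beta^{*,-i}$ (every agent there still acts only on its own private history and the common history). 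Second, whatever action law player $i$ uses at time $t$, players $-i$ act through $\tilde{\gamma}_t^{-i}(\cdot|x_t^{-i})$ and, by (\ref{eq:mu*def}), the belief is carried to $\underline{\mu}_{t+1}^{*}[a_{1:t}]=F(\underline{\pi},\tilde{\gamma}_t,a_t)$ --- with $\tilde{\gamma}_t$, \emph{not} player $i$'s deviated partial function, inside $F$. Writing $\nu(\cdot)\defeq\beta_t^i(\cdot|a_{1:t-1},x_{1:t}^i)$ and viewing it as the $x_t^i$-section of a partial function $\gamma_t^i(\cdot|\cdot)$, peeling off the time-$t$ reward and applying the induction hypothesis to $W_{t+1}^i$ gives
\[
W_t^i(a_{1:t-1},x_{1:t}^i)\;\le\;\E^{\nu\,\tilde{\gamma}_t^{-i},\,\pi}\Big\{R^i(X_t,A_t)+V_{t+1}^i\big(F(\underline{\pi},\tilde{\gamma}_t,A_t),X_{t+1}^i\big)\,\Big|\,x_t^i\Big\}.
\]
The right-hand side is precisely the objective maximized over $\gamma_t^i(\cdot|x_t^i)$ in the fixed-point equation (\ref{eq:m_FP}) (note (\ref{eq:m_FP}) also has $\tilde{\gamma}_t$ inside $F$), hence is $\le$ the same expression with $\nu$ replaced by $\tilde{\gamma}_t^i(\cdot|x_t^i)$, which is $V_t^i(\underline{\pi},x_t^i)$ by (\ref{eq:Vdef}). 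When $\beta^i=\beta^{*,i}$, (\ref{eq:beta*def}) gives $\nu=\theta_t^i[\underline{\pi}](\cdot|x_t^i)=\tilde{\gamma}_t^i(\cdot|x_t^i)$, and since $\beta^{*,i}$ depends on the private history only through the current type, the continuation $W_{t+1}^i$ under $\beta^{*,i}$ depends on $x_{1:t+1}^i$ only through $x_{t+1}^i$; so both inequalities become equalities by the equality case of the hypothesis, closing the induction.

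It remains to confirm that $(\beta^*,\mu^*)$ is a legitimate assessment. Since $\bar{F}$ of Claim~\ref{claim:C1} is the Bayesian update of the marginal type belief given one additional observed action profile, the forward recursion (\ref{eq:mu*def0})--(\ref{eq:mu*def}) makes $\mu_t^{*}[a_{1:t-1}]$ coincide with $P^{\beta^*}(X_t=\cdot\mid a_{1:t-1})$ on every positive-probability common history, while off-path beliefs are given by the same product-of-marginals rule; as noted in Section~\ref{sec:Result_A}, under this common-information, product structure the PBE consistency conditions are automatically satisfied. Combined with the sequential rationality above, this yields the theorem. The main obstacle will be the first structural fact invoked in the inductive step: showing that a \emph{unilateral} deviation by player $i$ disturbs neither the factorization of the common belief nor the posterior on $X_t^{-i}$, and in particular that conditioning on player $i$'s entire private history $x_{1:t}^i$ leaves that posterior equal to $\prod_{j\neq i}\mu_t^{*,j}[a_{1:t-1}]$. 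This decoupling is exactly what lets the backward fixed point (\ref{eq:m_FP}) --- with $\tilde{\gamma}_t$ rather than $\gamma_t^i\tilde{\gamma}_t^{-i}$ in the update --- represent player $i$'s true continuation value, and it is precisely where the alternative constructions (a) and (b) break down; making it rigorous amounts to propagating the conditional-independence structure of Fact~\ref{fact:L1} through an arbitrary $g$-type strategy of player $i$, with care for histories off the equilibrium path.
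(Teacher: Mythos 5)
Your overall architecture is the paper's: a backward induction in which the time-$t$ step is closed by the one-step-deviation property encoded in the fixed point (\ref{eq:m_FP}) and (\ref{eq:Vdef}). Your single two-sided claim ($W_t^i\le V_t^i$ with equality at $\beta^{*,i}$) simply merges what the paper states separately as Lemma~\ref{lemma:1} (the equality case) and Lemma~\ref{lemma:2} (the inequality case); that repackaging is fine and changes nothing of substance.

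The genuine gap is the step you yourself flag as ``the main obstacle,'' and it is not a side condition but the technical heart of the argument --- it is exactly the paper's Lemma~\ref{lemma:3}. To peel off the time-$t$ reward and invoke the induction hypothesis you need that, under the deviated measure $\beta^i_{t:T}\beta^{*,-i}_{t:T}$ with prior $\mu_t^{*}[a_{1:t-1}]$, the conditional law of $(X_{t+1}^{-i},A_{t+1:T},X_{t+2:T})$ given $(a_{1:t},x_{1:t+1}^i)$ coincides with the law generated by $\beta^i_{t+1:T}\beta^{*,-i}_{t+1:T}$ started from $\mu_{t+1}^{*}[a_{1:t}]$; only then is the inner continuation literally $W_{t+1}^i$ evaluated at the updated public belief, so that the induction hypothesis applies. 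Asserting ``the posterior on $X_t^{-i}$ does not depend on $x_{1:t}^i$ and the update uses $\tilde\gamma_t$'' is not yet a proof, because the conditioning event $(a_{1:t},x_{1:t+1}^i)$ contains player $i$'s realized action and next type, both of which are correlated with $x_t^{-i}$ through the kernel and could in principle re-introduce dependence on the deviation $\beta_t^i$. The paper closes this by the explicit Bayes computation (\ref{eq:F2})--(\ref{eq:F6}): write the conditional law as a ratio, factor numerator and denominator using the conditional independence of $x_{1:t}^i$ and $x_{1:t}^{-i}$ given $a_{1:t-1}$ (Claim~\ref{claim:CondInd}), and observe that the factors $\beta_t^{i}(a_t^{i}|a_{1:t-1},x_{1:t}^{i})$ and $Q^i(x_{t+1}^i|x_t^i,a_t)$ appear in both and cancel, leaving exactly $\mu_{t+1}^{*,-i}[a_{1:t}](x_{t+1}^{-i})$ times the continuation law under $\mu_{t+1}^{*}[a_{1:t}]$. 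This cancellation is where the independence-of-types assumption does its work, and it also covers off-path histories automatically (the update is defined by $\bar F$ everywhere, including when denominators vanish). Until you carry out this computation, the inequality chain in your inductive step is not justified.
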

\begin{IEEEproof}
Please see Appendix~\ref{app:B}.
\end{IEEEproof}
We emphasize that even though the backward-forward algorithm presented above finds a class of equilibrium strategies that are structured, the unilateral deviations of players in \eqref{eq:prop} are considered in the space of general strategies, i.e., the algorithm does not make any bounded rationality assumptions.

The following result shows that the backward-forward construction described above is ``canonical'', in the sense that all SPBE can be found through this methodology. Clearly, an SPBE can be defined as a PBE $(\beta^*,\mu^*)$ of the game that is generated through forward recursion in \eqref{eq:mu*def0}--\eqref{eq:mu*def}, using an equilibrium generating function $\phi$, where $\phi=(\phi^i_t)_{i\in\cN,t\in\mathcal{T}}$, $\phi^i_t : \times_{i\in\cN} \Delta(\cX^i) \to \left\{\cX^i \to \Delta(\cA^i) \right\}$, common belief update function $\underline{F}$ and prior distributions $Q_1$. As a consequence, $\beta^{*,i}_t$ only depends on current type $x_t^i$ of player $i$, and on the common information $a_{1:t-1}$ through the set of marginals $\underline{\mu}^*_t[a_{1:t-1}]$, and $\mu_t^{*,i}$ depends only on common information history $a_{1:t-1}$.

\begin{theorem}[Converse]
	\label{thm:2}
	Let ($\beta^*,\mu^*$) be an SPBE. Then there exists an equilibrium generating function $\phi$
 that satisfies \eqref{eq:m_FP} in backward recursion $\forall\ \pi_t = \mu^*_t[a_{1:t-1}], \ \forall\ a_{1:t-1}$,
	such that  ($\beta^*,\mu^*$) is defined through forward recursion using $\phi$.\footnote{Note that for $\underline{\pi}_t \neq \underline{\mu}^*_t[a_{1:t-1}] $ for any $a_{1:t-1}$, $\phi$ can be arbitrarily defined without affecting the definition of $(\beta^*,\mu^*)$.}
\end{theorem}
\begin{IEEEproof}
	Please see Appendix~\ref{app:id}.
\end{IEEEproof}

\vspace{-0.35cm}

\subsection{Discussion}\label{sec:discussion}

Several remarks are in order with regard to the above methodology and the result.

\emph{Remark 1: }
The second sub-case in~\eqref{eq:F_update} dictates how beliefs are updated for histories with zero probability. The particular expression used is only one of many possible updates than can be used here. Dynamics that govern the evolution of public beliefs at histories with zero probability of occurrence affect equilibrium strategies. Thus, the construction proposed for calculating PBEs in this paper will produce a different set of equilibria if one changes the second sub-case above. The most well-known example of another such update is the \emph{intuitive criterion} proposed in~\cite{chokreps87} for Nash equilibria, later generalized to sequential equilibria in~\cite{cho87}. The intuitive criterion assigns zero probability to states that can be excluded based on data available to all players (in our case action profile history $ a_{1:t-1} $). Another example of belief update is \emph{universal divinity}, proposed in~\cite{sobel87}.

\emph{Remark 2:} To highlight the significance of the unique structure of \eqref{eq:m_FP}, one can think as follows.
When all players other than player $i$ play structured strategies, i.e., strategies of the form $A^j_t\sim m^j_t(\cdot|\upi_t,x^j_t)=\theta^j_t[\upi_t](\cdot|x^j_t)$, one may want to characterize the optimization problem from the viewpoint of the $i$-th player in order to characterize its best response.
In particular one may want to show that although player $i$ can play general strategies of the form $A^i_t\sim g^i_t(\cdot|x^i_{1:t},a_{1:t-1})$, it is sufficient to best respond with structured strategies of the form
$A^i_t\sim m^i_t(\cdot|\upi_t,x^i_t)=\theta^i_t[\upi_t](\cdot|x^i_t)$
as well.
To show that, one may entertain the thought that player $i$ faces an MDP with state $(X^i_t,\uPi_t)$, and action $A^i_t$ at time $t$.
If that were true, then player $i$'s optimal action could be characterized (using standard MDP results)  by a dynamic-programming equation similar to~\eqref{eq:m_FP}, of the form
 \eq{
 \tilde{\gamma}^{i}_t(\cdot|x_t^i) \in
 \arg\max_{\gamma^i_t(\cdot|x_t^i)} \E^{\gamma^i_t(\cdot|x_t^i) \tilde{\gamma}^{-i}_t,\,\pi_t} \left\{ R_t^i(X_t,A_t) + V_{t+1}^i (\uF(\underline{\pi}_t, \gamma^i_t(\cdot|x^i_t), \tilde{\gamma}^i_t(\cdot|\cdot), \tilde{\gamma}^{-i}_t, A_t), X_{t+1}^i) \big\lvert x_t^i \right\}, \label{eq:alternative}
  }
where, unlike~\eqref{eq:m_FP}, in the belief update equation the partial strategy $\gamma^i_t(\cdot|x^i_t)$ is also optimized over.
However, as it turns out, user $i$ does not face such an MDP problem!
The reason is that the update equation $\upi_{t+1}=\uF(\upi_t,\gamma_t,a_t)$ also depends on $\gamma^i_t$ which is the partial strategy of player $i$ and this has not been fixed in the above setting.
If however the update equation is first fixed (so it is updated as $\upi_{t+1}=\uF(\underline{\pi}_t, \tilde{\gamma}^i_t, \tilde{\gamma}^{-i}_t, a_t)=\uF(\underline{\pi}_t, \theta_t[\pi_t], a_t)$, i.e., using the equilibrium strategies even for player $i$) then indeed the problem faced by user $i$ is the MDP defined above.
%
%
It is now clear why~\eqref{eq:m_FP} has the flavor of a fixed-point equation: the update of beliefs needs to be fixed beforehand with the equilibrium action $\tilde{\gamma}^i_t$ even for user $i$, and only then user $i$'s best response can depend only on the MDP state $(X^i_t,\uPi_t)$ thus being a structured strategy as well. This implies that his optimal action $\tilde{\gamma}^i_t$ appears both on the left and right hand side of this equation giving rise to~\eqref{eq:m_FP}.

\emph{Remark 3:} In this paper, we find a class of PBEs of the game, while there may exist other equilibria that are not ``structured", and can not be found by directly using the proposed methodology. The rationale for using structured equilibria over others is the same as that for using MPE over SPE for a symmetric information game; a focussing argument for using simpler strategies being one of them.

\section{A Methodology for SPBE computation in Infinite Horizon} \label{secih}
In this section we consider the infinite horizon discounted reward dynamic game $ \mathfrak{D}_{\infty} $.
%
%
%
We state the fixed-point equation that defines the value function and strategy mapping for the infinite horizon problem. This is analogous to the backwards recursion (\eqref{eq:m_FP} and~\eqref{eq:Vdef}) that define the value function and $ \theta $ mapping for the finite horizon problem.

Define the set of functions $ V^i: \times_{j=1}^N \Delta(\mX^j) \times \mX^i \rightarrow \mathbb{R} $ and strategies $ \tilde{\gamma}^i:\mX^i \rightarrow \Delta(\mA^i) $ (which are generated formally as $ \tilde{\gamma}^i = \theta^i[\upi] $ for given $ \upi $) via the following fixed-point equation: $ \forall $ $ i \in \mN $, $ x^i \in \mX^i $,
\begin{subequations} \label{eqihfpe}
\begin{align} \label{eqihfpeA}
	\tilde{\gamma}^i(\cdot \mid x^i) &\in \argmax_{\gamma^i(\cdot \mid x^i) \in \Delta(\mA^i)} \mE^{\gamma^i(\cdot \mid x^i),\tilde{\gamma}^{-i},\pi^{-i}} \lpr R^i(X,A) 
	+ \delta V^i\big(\uF(\upi,\tilde{\gamma},A),{X^\prime}^{i}\big) \mid \upi,x^i \rpr,
\\
	V^i(\upi,x^i) &= \mE^{\tilde{\gamma}^i(\cdot \mid x^i),\tilde{\gamma}^{-i},\pi^{-i}} \lpr R^i(X,A)
	+ \delta V^i\big(\uF(\upi,\tilde{\gamma},A),{X^\prime}^{i}\big) \mid \upi,x^i \rpr.
\end{align}
\end{subequations}
Note that the above is a joint fixed-point equation in $ (V,\tilde{\gamma}) $, unlike the backwards recursive algorithm earlier which required solving a fixed-point equation only in $ \tilde{\gamma} $. Here the unknown quantity is distributed as $ (X^{-i},A^i,A^{-i},{X^\prime}^{i}) \sim \pi^{-i}(x^{-i})\gamma^i(a^i \mid x^i) \tilde{\gamma}^{-i}(a^{-i} \mid x^{-i}) Q^i({x^\prime}^{i} \mid x^i,a) $, 
and $ F^i(\cdot) $ is defined in~\eqref{eq:F_update}.

Define the belief $ \mu^\star $ inductively similar to the forward recursion from Section~\ref{sec:fr}.
By construction the belief defined above satisfies the consistency condition needed for a PBE. Denote the strategy arising out of $ \tilde{\gamma} $ by $ \beta^\star $ i.e.,
\begin{gather} \label{eqbeta}
\beta_t^{i,\star}(a_t^i \mid x_{1:t}^i,a_{1:t-1}) = \theta^i\big[\underline{\mu}_t^{\star}[a_{1:t-1}]\big](a_t^i \mid x_t^i).
\end{gather}
Note that although the mapping $ \theta^i $ is stationary, the strategy $ \beta_t^{i,*} $ derived from it is not so. 
Below we state the central result of this section, that the strategy-belief pair $ (\beta^\star,\mu^\star) $ constructed from the solution of the fixed-point equation~\eqref{eqihfpe} and the forward recursion
indeed constitutes a PBE.

\begin{theorem}\label{thih}
	Assuming that the fixed-point equation~\eqref{eqihfpe} admits an absolutely bounded solution $ V^i $ (for all $ i \in \mN $), the strategy-belief pair $ (\beta^\star,\mu^\star) $ defined in~\eqref{eqbeta}
	is a PBE of the infinite horizon discounted reward dynamic game i.e., $ \forall $ $ i \in \mN $, $ \beta^i $, $ t \ge 1 $, $ h_t^i \in \mathcal{H}_t^i $,
	\eq{ 	
	\mE^{\beta^{i,\star},\beta^{-i,\star},\mu_t^\star[h_t^c]} \lpr \sum_{n=t}^\infty \delta^{n-t} R^i(X_n,A_n) \mid h_t^i \rpr
	\ge
	\mE^{\beta^{i},\beta^{-i,\star},\mu_t^\star[h_t^c]} \lpr \sum_{n=t}^\infty \delta^{n-t} R^i(X_n,A_n) \mid h_t^i \rpr.
	}	
\end{theorem}


\begin{IEEEproof}
	Please see Appendix~\ref{app:ih}.
\end{IEEEproof}

Our approach to proving Theorem~\ref{thih} is as follows. We begin by noting that the standard contraction mapping arguments used in infinite horizon discounted reward MDPs/POMDPs viewed as a limit of finite horizon problems, do not apply here, since the policy equation~\eqref{eqihfpeA} is not a maximization, but a different fixed-point equation. So we attempt to ``fit" the infinite horizon problem into the framework of finite-horizon model developed in the previous section. We do that by first introducing a terminal reward that depends on common beliefs, in the backward-forward recursion construction of Section~\ref{sec:fh} for finite horizon games.
We consider a finite horizon, $ T > 1 $, dynamic game with rewards same as in the infinite horizon version and time invariant transition kernels $ Q^i $. For each player $ i $, there is a terminal reward $ G^i(\pi_{T+1},x_{T+1}^i) $ that depends on the terminal type of player $ i $ and the terminal belief. It is assumed that $ G^i(\cdot) $ is absolutely bounded. We define the value functions $ \big(V_t^{i,T}: \times_{j \in \mN} \Delta(\mX^j) \times \mX^i \rightarrow \mathbb{R} \big)_{i \in \mN,t\in \mathcal{T} } $ and strategies $ \big( \tilde{\gamma}_t^{i,T} \big)_{i \in \mN,t\in \mathcal{T} } $ backwards inductively in the same way as in Section~\ref{sec:fhbr} except Step 1, where instead of~\eqref{eq:VT+1} we set $ V_{T+1}^{i,T} \equiv G^i  $. This consequently results in a strategy/belief pair $ (\beta^\star,\mu^\star) $, based on the forward recursion in Section~\ref{sec:fr}.
Now, due to the above construction, the value function $ V_t^{i,T} $ from above and $ V^i $ from~\eqref{eqihfpe} are related (please see Lemma~\ref{lemfhtoih} in Appendix~\ref{app:interm_ih}). %
This result combined with continuity arguments as $ T \to \infty $ complete the proof of Theorem~\ref{thih}.

\section{An Existence Result for the Fixed-Point Equation} \label{sec:existence}

In this section, we discuss the problem of existence of signaling equilibria.\footnote{In the special case of uncontrolled types where player $i$'s instantaneous reward does not depend on its private type $x_t^i$, the fixed point equation always has a type-independent, myopic solution $\tilde{\gamma}^i_t(\cdot)$, since it degenerates to a best-response-like equation similar to the one for computing Nash equilibrium. This result is shown in~\cite{OuTaTe15}.}
While it is known that for any finite dynamic game with asymmetric information and perfect recall, there always exists a PBE~\cite[Prop. 249.1]{OsRu94}, existence of SPBE is not guaranteed.
It is clear from our algorithm that existence of SPBE boils down to existence of a solution to the fixed-point equation~\eqref{eq:m_FP} in finite horizon and~\eqref{eqihfpe} in infinite horizon. Specifically, for the finite horizon, at each time $ t $ given the functions $ V_{t+1}^i $ for all $ i \in \mN $ from the previous round (in the backwards recursion) equation~\eqref{eq:m_FP} must have a solution $ \tilde{\gamma}_t^i $ for all $ i \in \mN $. Generally, existence of equilibria is shown through Kakutani's fixed point theorem, as is done in proving existence of a mixed strategy Nash equilibrium of a finite game~\cite{OsRu94,Na51}. This is done by showing existence of fixed point of the best-response correspondences of the game. Among other conditions, it requires the ``closed graph'' property of the correspondences, which is usually implied by the continuity property of the utility functions involved.
For~\eqref{eq:m_FP} establishing existence is not straightforward due to: (a) potential discontinuity of the $\pi_t$ update function $F$ when the denominator in the Bayesian update is 0 and (b) potential discontinuity of  the value functions, $V_{t+1}^i$.
In the following we provide sufficient conditions that can be checked at each time $ t $ to establish the existence of a solution.

We consider a generic fixed-point equation similar to the one encountered in Section~\ref{sec:fh} and Section~\ref{secih} and state conditions under which they are guaranteed to have a solution. To concentrate on the essential aspects of the problem we consider a simple case with $ N=2 $, type sets $ \mX^i = \{x^H,x^L\} $ and action sets $ \mA^i = \{0,1\} $. Furthermore, types are static and instantaneous rewards $ R^i(x,a) $ do not depend on $ x^{-i} $.

Given public belief $ \upi = (\pi^1,\pi^2) \in \times_{i=1}^2 \Delta(A^i) $, value functions $ V^1,V^2 $, one wishes to solve the following system of equations for $ \big( \tilde{\gamma}^i(\cdot \mid x^i) \big)_{{x^i \in \{x^H,x^L\}, i \in \{1,2\}}} $.
\begin{multline}
	\tilde{\gamma}^i(\cdot \mid x^i) \in \argmax_{\gamma^i(\cdot \mid x^i) \in \Delta(\mA^i)} \mE^{\gamma^i(\cdot \mid x^i),\tilde{\gamma}^{-i}} \Big\{ R^i(x^i,A)
	+ V^i\Big(\big(F^1(\pi^1,\tilde{\gamma}^1,A^1),F^2(\pi^2,\tilde{\gamma}^2,A^2)\big),x^i\Big) \mid x^i,\upi \Big\}
\end{multline}
where the expectation is evaluated using the probability distribution on $ (A^1,A^2) $,
\begin{gather}
\gamma^i(a^i \mid x^i) \big[ \pi^{j}(x^H) \tilde{\gamma}^{j}(a^j \mid x^H) + \pi^{j}(x^L) \tilde{\gamma}^{j}(a^j \mid x^L) \big].
\end{gather}

The probabilistic policy $ \tilde{\gamma} $ can be represented by the 4-tuple $ \underline{p} = \big( \tilde{p}^{1L},\tilde{p}^{2L},\tilde{p}^{1H},\tilde{p}^{2H} \big) $ where $ \tilde{p}^{iH} = \gamma^i(a^i=1 \mid x^H) $ and $ \tilde{p}^{iL} = \gamma^i(a^i=1 \mid x^L) $, $i=1,2$.

The fixed-point equation of interest reduces to
\begin{multline} \label{eqfpe}
\tilde{p}^{1H} \in \argmax_{a \in [0,1]} a \Big[ \big( \pi^2 \tilde{p}^{2H} + (1-\pi^2) \tilde{p}^{2L} \big) 
\big( V^1(F_1(\pi^1,\tilde{p}^1),F_1(\pi^2,\tilde{p}^2),x^H) - V^1(F_0(\pi^1,\tilde{p}^1),F_1(\pi^2,\tilde{p}^2),x^H) \big)
\\
+\big( 1 - \pi^2 \tilde{p}^{2H} - (1-\pi^2) \tilde{p}^{2L} \big) \big( V^1(F_1(\pi^1,\tilde{p}^1),F_0(\pi^2,\tilde{p}^2),x^H) 
- V^1(F_0(\pi^1,\tilde{p}^1),F_0(\pi^2,\tilde{p}^2),x^H) \big)
\\
+\big( \pi^2 \tilde{p}^{2H} + (1-\pi^2) \tilde{p}^{2L} \big) \big( R^1(x^H,1,1) - R^1(x^H,0,1) \big)
\\
+\big( 1 - \pi^2 \tilde{p}^{2H} - (1-\pi^2) \tilde{p}^{2L} \big) \big( R^1(x^H,1,0) - R^1(x^H,0,0) \big) \Big]
\end{multline}
and three other similar equations for $ \tilde{p}^{1L},\tilde{p}^{2H},\tilde{p}^{2L} $. 
\begin{subequations}
	\begin{align}
	F_1(\pi,(p^H,p^L)) &\triangleq \frac{\pi p^H}{\pi p^H + \bar{\pi} p^L}
	\\
	F_0(\pi,(p^H,p^L)) &\triangleq \frac{\pi (1-p^H)}{\pi (1-p^H) + \overline{\pi} (1-p^L)}
	\end{align}
\end{subequations}
and in both definitions, if the denominator is $ 0 $ then the RHS is taken as $ \pi $.
\vspace{-0.4cm}
\subsection{Points of Discontinuities and the Closed graph result}   \label{sec:existdisc}
Equation~\eqref{eqfpe} and the other three similar equations  are essentially of the form (for a given $ \upi $)
	\eq{
	\label{eqpp4}
	x \in \argmax_{a \in [0,1]} \ a f_1(x,y,w,z), \;\;\;
	y \in \argmax_{b \in [0,1]} \ b f_2(x,y,w,z) \nn\\
	w \in \argmax_{c \in [0,1]} \ c f_3(x,y,w,z), \;\;\;
	z \in \argmax_{d \in [0,1]} \ d f_4(x,y,w,z)
	}
with $ x,y,z,w $ as $ \tilde{p}^{1H},\tilde{p}^{1L},\tilde{p}^{2H},\tilde{p}^{2L} $, respectively.

Define $ \mathcal{D}_i \subseteq [0,1]^4 $ as the set of discontinuity points of $ f_i $ and $ \mathcal{D} \triangleq \cup_{i=1}^4 \mathcal{D}_i $.

For any point $ \ux_0 \in \mathcal{D} $, define $ S(\ux_0) $ as the subset of indexes $ i \in \{1,2,3,4\} $ for which $ f_i(\ux) $ is discontinuous at $ \ux_0 $.

\paragraph*{Assumption (E1)} At any point $ \ux_0 \in \mathcal{D} $, $ \forall $ $ i \in S(\ux_0) $ one of the following is satisfied:
\begin{enumerate}
	\item $ f_i(\ux_0) $ = 0, or
	
	\item $ \exists $ $ \epsilon > 0 $ such that $ \forall $ $ \ux \in B_{\epsilon}(\ux_0) $ (inside an $\epsilon$-ball of $\ux_0$) the sign of $ f_i(\ux) $ is same as the sign of $ f_i(\ux_0) $.
\end{enumerate}

In the following we provide a sufficient condition for existence.
\begin{theorem} \label{thm:exist}
	Under Assumption (E1), there exists a solution to the fixed-point equation~\eqref{eqpp4}.
\end{theorem}	
\begin{IEEEproof}
	Please see Appendix~\ref{app:existence}.
\end{IEEEproof}	



The above set of results provide us with an analytical tool for establishing existence of a solution to the concerned fixed-point equation.


While the above analytical result is useful in understanding a theoretical basis for existence, it doesn't cover all instances. For instance, fixed-point equation arising out of~\eqref{eq:m_FP} for $ t=1 $ from Section~\ref{sec:Example_fh}, does not satisfy assumption~(E1). In the following we provide a more computationally orientated approach to establishing existence and/or solving the generic fixed-point equation~\eqref{eqpp4}.

We motivate this case-by-case approach with the help of an example. Suppose we hypothesize that the solution to~\eqref{eqpp4} is such that $ x=0,w=0 $ and $ y,z \in (0,1) $. Then~\eqref{eqpp4} effectively reduces to  checking if there exists $ y^\star,z^\star \in (0,1) $ such that
\begin{subequations}
	\begin{gather}
	\label{eqss1}
	y^\star \in \argmax_b~ b \, f_2(0,y^\star,0,z^\star) \;\;\;
	z^\star \in \argmax_d~ d \, f_4(0,y^\star,0,z^\star)\\	
	\label{eqss-1}
	f_1(0,y^\star,0,z^\star) \le 0 \;\;\;\;\;
	f_3(0,y^\star,0,z^\star) \le 0.
	\end{gather}
\end{subequations}
Thus the 4-variable system reduces to solving a 2-variable system and 2 conditions to verify.
For instance, if $f_2(0,y,0,z)$, $f_4(0,y,0,z)$ as functions of $y,z$ satisfy the conditions of Theorem~\ref{thm:exist}
then the sub-system~\eqref{eqss1} has a solution. If one of these solution is also consistent with~\eqref{eqss-1} then this sub-case indeed provides a solution to~\eqref{eqpp4}.

Generalizing the simplification provided in the above example, we divide solutions into $ 3^4=81 $ cases\footnote{Generally, the number of cases is $ 3^{\sum_{i=1}^N M_i} $ where $ N $ is the number of agents and $ M_i $ is the number of types for player $ i $.} based on whether each of $ x,y,w,z $ are in  $ \{0\},(0,1),\{1\} $. There are
(1) 16 corner cases where none are in the strict interior $ (0,1) $;
(2) 32 cases where exactly one is in the strict interior $ (0,1) $;
(3) 24 cases where 2 variables  are in the strict interior $ (0,1) $;
(4) 8 cases where 3 variables are in the strict interior $ (0,1) $;
(5) 1 case where all 4 variables are in the strict interior  $ (0,1) $.

Similar to the calculations above, for each of the 81 cases one can write a sub-system to which the problem~\eqref{eqpp4} effectively reduces to. Clearly, if any one of the 81 sub-systems has a solution then the problem~\eqref{eqpp4} has a solution. Furthermore, searching for a solution reduces to an appropriate sub-problem depending on the case.

The approach then is to enumerate each of these 81 cases (as stated above) and check them in order. However this case-by-case division provides a computational simplification - not all cases require solving the entire fixed-point equation. Whenever a variable, say $ y $, is not in the strict interior $ (0,1) $ then the corresponding equation~\eqref{eqpp4} need not be solved, since one only needs to verify the sign at a specific point. Hence, all sub-cases of (1) reduce to simply checking the value of functions $ f_i $ at corner points - no need for solving a fixed-point equation. All sub-cases of (2) reduce to solving a 1-variable fixed-point equation and three corresponding conditions to verify, etc.
\vspace{-0.2cm}

\section{A Concrete Example of Multi-stage investment in Public Goods} \label{secexample} \label{sec:Example}
Here we discuss both, a two-stage (finite) and an infinite-horizon version of a public goods example to illustrate the methodology described above for the construction of SPBE.
\vspace{-0.4cm}
\subsection{A two stage public goods game} \label{sec:Example_fh}
We consider a discrete version of Example~8.3 from \cite[ch.8]{FuTi91book}, which is an instance of a repeated public goods game. There are two players who play a two-period game. In each period $t$, they simultaneously decide whether to contribute to the period $t$ public good, which is a binary decision $a_t^i \in \{ 0,1\}$ for players $i=1,2$. Before the start of period 2, both players know the action profile from period 1. In each period, each player gets a reward of 1 if at least one player contributed and 0 if none contributed. Player $i$'s cost of contributing is $x^i$ which is its private information.
Both players believe that $x^{i}$'s are drawn independently and identically with probability distribution $Q$ with support $\{{x}^L, {x}^H\}$; $0<x^L<1<x^H$, and $\mathbb{P}^Q(X^i=x^H)=q \in (0,1)$. 

In our model this corresponds to $N=2, T=2$ and reward for player $i$ in period $t$ is $R_t^i(x,a_t) = \delta^t R^i(x,a_t)$, with $ R^i(x,a_t) = (1-x^i) \mathds{1}(a_t^i = 1) + a_t^{-i} \mathds{1}(a_t^i=0) $.
We set $ \delta=1 $ in this two-stage case.
%
%
We use the backward recursive algorithm from Section~\ref{sec:Result} to find an SPBE of this game.
Here the partial functions $\gamma_t^i$ can equivalently be defined through the scalars $ p^{iL}_t$, $p^{iH}_t \in [0,1] $, for $ t=1,2 $ and $ i=1,2 $, where
\begin{subequations}
\begin{align}	
\gamma_t^i(1|x^L) = p^{iL}_t, \quad
\gamma_t^i(0|x^L) = 1-p^{iL}_t,
\\
\gamma_t^i(1|x^H) = p^{iH}_t, \quad
\gamma_t^i(0|x^H) = 1-p^{iH}_t,
\end{align}
\end{subequations}
Henceforth, $p^{iL}_t, p^{iH}_t$ is used interchangeably with $\gamma_t^i$.

For $t=2$ and for any fixed $\underline{\pi}_2 = (\pi_2^1,\pi_2^2)$, where $\pi_2^i  = \pi^i_2(x^H)\in [0,1]$ represents a probability measure on the event $\{X^i = x^H \}$.  
Let $\tilde{\gamma}_2 = (\tilde{p}^{1L}_2, \tilde{p}^{2L}_2,\tilde{p}^{1H}_2,\tilde{p}^{2H}_2) = \theta_2[\underline{\pi}_2]$
be defined through the fixed point equation~\eqref{eq:m_FP}.
Since $1-x^H<0$, $ \tilde{p}^{iH}_2 = 0$ is the solution.
Thus the fixed-point equation can be reduced to, $\forall i \in \{1,2 \}$,
\eq{
	\hs{-0.1cm}\tilde{p}^{iL}_2 \in \arg\max_{{p}^{iL}_2}& (1-p_2^{iL})(1-\pi_2^{-i})\tilde{p}_2^{-iL} + p_2^{iL}(1-x^L). \label{eq:ext2}
}
This implies~\eqref{eq:p2eq} below, the solutions to which are shown in Figure~\ref{fig1} in the space of $(\pi_2^1,\pi_2^2)$.
\eq{
	\tilde{p}^{iL}_2 &= \lb{0 \;\;\;\;\;\;\;\;\;\;\;\;\;\; \hfill \text{if}\;\;\;\; \hfill x^L > 1-(1-\pi_2^{-i})\tilde{p}_2^{-iL},\\
		1 \;\;\;\;\;\;\;\;\;\;\;\;\;\;\;\; \hfill \text{if}\;\;\;\;\;\; \hfill x^L < 1-(1-\pi_2^{-i})\tilde{p}_2^{-iL},\\
		\text{arbitrary} \;\;\; \hfill \text{if}\;\;\;\; \hfill x^L = 1-(1-\pi_2^{-i})\tilde{p}_2^{-iL}.\\
	} \label{eq:p2eq}
}
%
\begin{figure}[!htbp]
	\centering
	\vspace{-0.85cm}
	\includegraphics[height=3in]{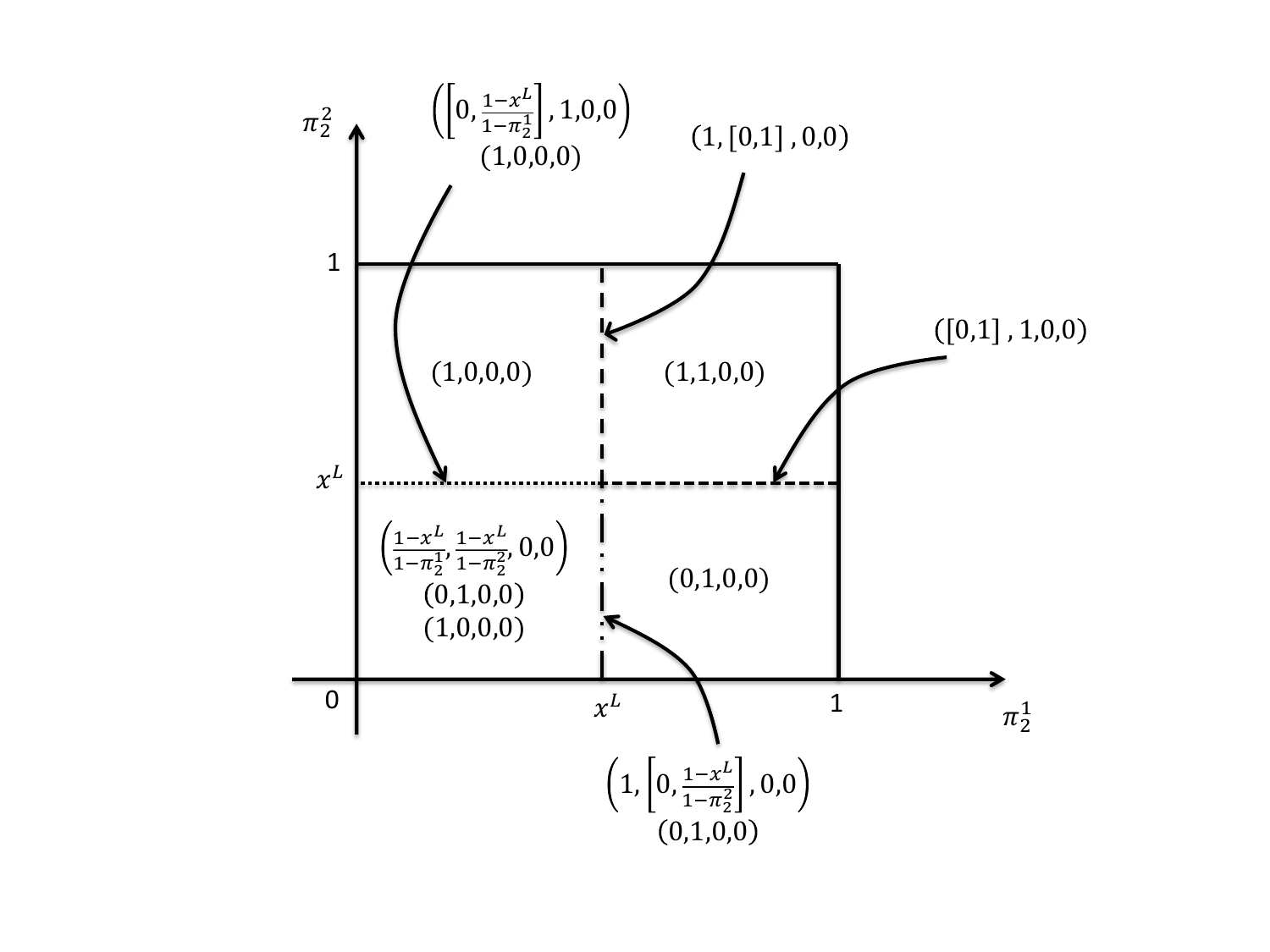}
	\vspace{-0.5in}
	\caption{Solutions of fixed point equation in \eqref{eq:p2eq}. Solutions are shown as quadruplets $(\tilde{p}^{1L}_2, \tilde{p}^{2L}_2,\tilde{p}^{1H}_2,\tilde{p}^{2H}_2)$ with intervals used whenever the solution is not uniquely defined.}
	\label{fig1}
	\vspace*{-1.5ex}
\end{figure}

Thus for any $\underline{\pi}_2$, there can exist multiple equilibria and correspondingly multiple $\theta_2[\underline{\pi}_2]$ can be defined. For any particular $\theta_2$, at $t=1$, the fixed point equation arising out of~\eqref{eq:m_FP} defines $\theta_1[Q^2]$, where $ Q^2 = Q \times Q $ denotes the profile of initial belief. 

Using one such $\theta_2$ defined below, we find an SPBE of the game for $q=0.1, x^L = 0.2, x^H=1.2$. We use $\theta_2[\underline{\pi}_2]$ as one possible set of solutions of \eqref{eq:p2eq}, 
described below,
\eq{
	\label{eq:theta2}
	\theta_2[\pi_2] =(\tilde{p}^{1L}_2,\tilde{p}^{2L}_2, \tilde{p}^{1H}_2,\tilde{p}^{2H}_2)
	= \lb{ (\frac{1-x^L}{1-\pi_2^1},\frac{1-x^L}{1-\pi_2^2},0,0) \;\;\;\;\hfill \pi_2^1 \in[0, x^L), \pi_2^2 \in [0,x^L) \text{ }\\
		(1,0,0,0) \;\;\;\hfill \pi_2^1 \in[0, x^L], \pi_2^2 \in [x^L,1] \text{ } \\
		(0,1,0,0)\;\;\;\hfill \pi_2^1 \in[x^L,1], \pi_2^2 \in[0, x^L] \text{ }\\
		(1,1,0,0)\;\;\;\hfill \pi_2^1\in(x^L , 1], \pi_2^2 \in (x^L , 1].
	}\hs{-0.2cm}
}

Then, through iteration on the fixed point equation 
and using the aforementioned $\theta_2[\underline{\pi}_2]$, we numerically find (and analytically verify) that $\theta_1[Q^2] =(\tilde{p}^{1L}_1,\tilde{p}^{2L}_1, \tilde{p}^{1H}_1,\tilde{p}^{2H}_1) = (0,1,0,0)$ is a fixed point. Thus
\seq{
	\eq{
		\beta^1_1(A_1^1 = 1|X^1=x^L) = 0 \hspace{10pt}&\hspace{10pt} \beta^2_1(A_1^2 = 1|X^2=x^L) = 1 \nonumber \\
		\beta^1_1(A_1^1 = 1|X^1=x^H) = 0 \hspace{10pt}&\hspace{10pt} \beta^2_1(A_1^2 = 1|X^2=x^H)= 0 \nonumber
	}
}
with beliefs $\underline{\mu}^*_2[00] = (q,1), \underline{\mu}^*_2[01] = (q,0), \underline{\mu}^*_2[10]=(q,1) , \underline{\mu}^*_2[11] =(q,0) $ and $\left(\beta^i_2(\cdot|a_1,\cdot)\right)_{i\in \{1,2\}}=\theta_2[\underline{\mu}^*_2[a_1]] $ is an SPBE of the game.
In this equilibrium, player 2 at time $t=1$, contributes according to her type whereas player 1 never contributes, thus player 2 reveals her private information through her action whereas player 1 does not.
Since $\theta_2$ is symmetric, there also exists an (antisymmetric) equilibrium where at time $t=1$, players' strategies reverse i.e. player 2 never contributes and player 1 contributes according to her type.
We can also obtain a symmetric equilibrium where $\theta_1[Q^2] = (\frac{1-x^L}{(1-q)(1+x^L)},\frac{1-x^L}{(1-q)(1+x^L)},0,0)$ as a fixed point when $x^L > \frac{q}{2-q}$, resulting in beliefs $\underline{\mu}^*_2[00] = (p,p), \underline{\mu}^*_2[01] = (p,0), \underline{\mu}^*_2[10]=(0,p) , \underline{\mu}^*_2[11] =(0,0) $ where $p=\frac{q(1+x^L)}{q(1+x^L) + (1-x^L)}$.

\subsection{Infinite horizon version}

For the infinite horizon version we consider three values $ \delta = 0,0.5,0.95 $ and solve the corresponding fixed point equation (arising out of~\eqref{eqihfpe})  numerically to calculate the mapping $ \theta $. 
The fixed-point equation is solved numerically by discretizing the $ \upi- $space $ [0,1]^2 $ and all solutions that we find are symmetric w.r.t. players i.e., $ \tilde{p}^{1L} $ for $ \upi=(\pi^1,\pi^2) $ is the same as $ \tilde{p}^{2L} $ for $ \upi^\prime = (\pi^2,\pi^1) $ and similarly for $ \tilde{p}^{1H},\tilde{p}^{2H} $.

For $ \delta = 0 $, the game is instantaneous and actually corresponds to the second round $ t=2 $ play in the finite horizon two-stage version above. Thus whenever player $ 1 $'s type is $ x^H $, it is instantaneously profitable not to contribute. This gives $ \tilde{p}^{1H} = 0 $, for all $ \upi $. Thus we only plot $ \tilde{p}^{1L} $; in Fig.~\ref{fig:del0_p1L} (this can be inferred from the discussion and Fig.~\ref{fig1} above).
Intuitively, with type $ x^L $ the only values of $ \upi $ for which player 1 would not wish to contribute is if he anticipates player $ 2 $'s type to be $ x^L $ with high probability and rely on player 2 to contribute. This is why for lower values of $ \pi^2 $ (i.e., player $ 2 $'s type likely to be $ x^L $) we see $ \tilde{p}^{1L} = 0 $ in Fig.~\ref{fig:del0_p1L}.

\begin{figure}[!htbp]
	\centering
	\includegraphics[width=0.5\textwidth]{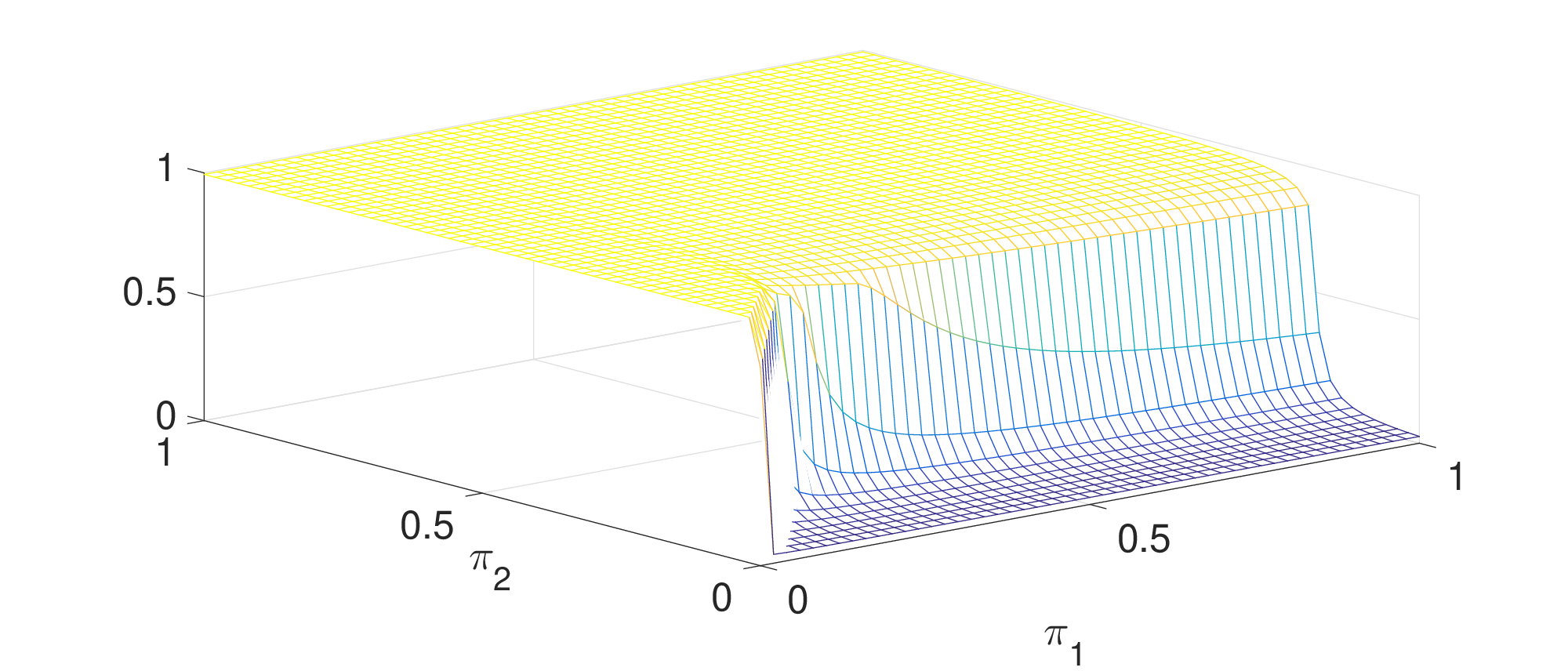}	
	\caption{$ \tilde{p}^{1L} $ vs. $ (\pi^1,\pi^2) $ at $ \delta = 0 $; $ (\tilde{p}^{1L},\tilde{p}^{1H}) = \theta^1[\pi^1,\pi^2] $.}	
	\label{fig:del0_p1L}	
\end{figure}

\begin{figure}[!htbp]
	\centering
	\includegraphics[width=\textwidth]{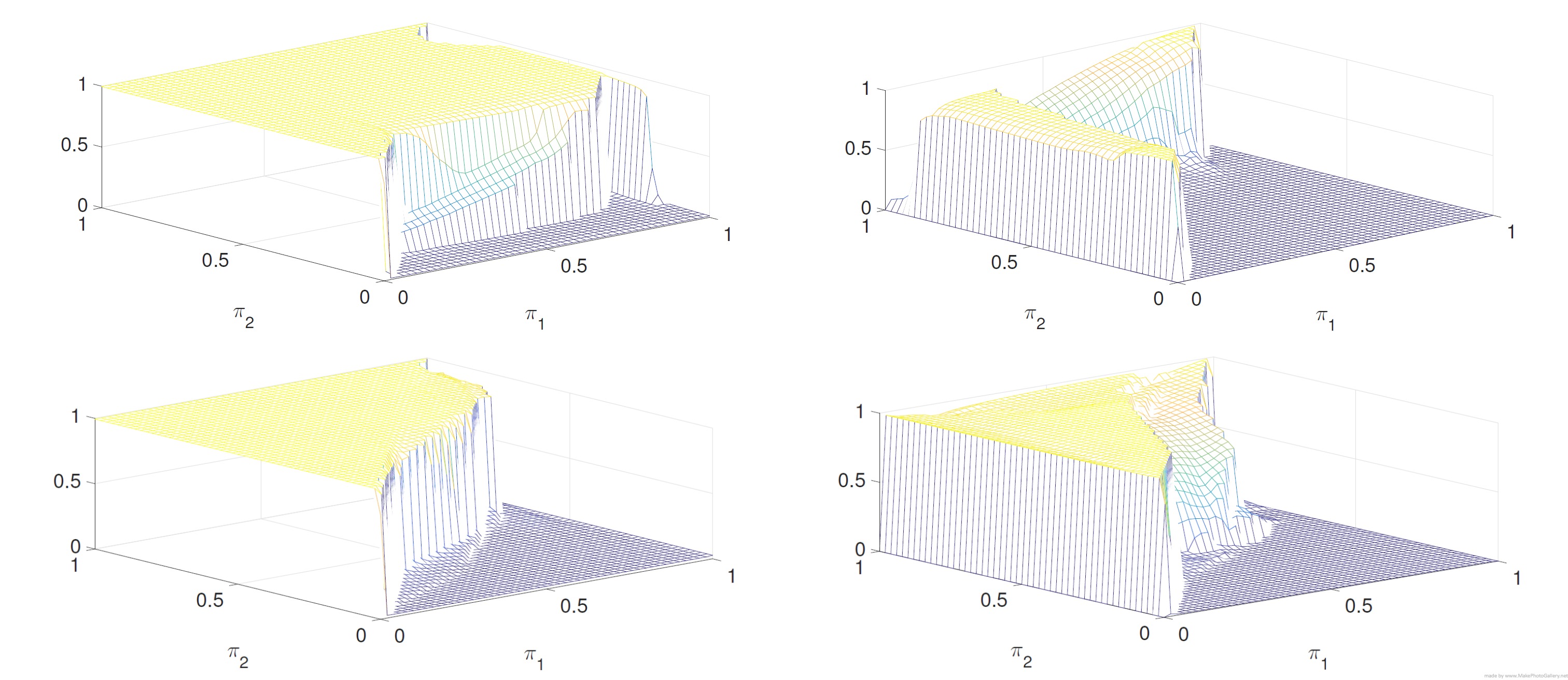}
	\caption{$ \tilde{p}^{1L},\tilde{p}^{1H} $ vs. $ (\pi^1,\pi^2) $ at $ \delta = 0.5 $ (upper left and right). $ \tilde{p}^{1L},\tilde{p}^{1H} $ vs. $ (\pi^1,\pi^2) $ at $ \delta = 0.95 $ (lower left and right).}
	\label{fig:combined}
\end{figure}

Now consider $ \tilde{p}^{1L} $ plotted in Fig.~\ref{fig:del0_p1L}
and~\ref{fig:combined}. As $ \delta $ increases, future rewards attain more priority and signaling comes into play. So while taking an action, players not only look for their instantaneous reward but also how their action affects the future public belief $ \pi $ about their private type. It is evident in the figures that as $ \delta $ increases, at high $ \pi^1 $, up to larger values of $ \pi^2 $ player $ 1 $ chooses not to contribute when his type is $ x^L $. This way he intends to send a ``wrong'' signal to player $ 2 $ i.e., that his type is $ x^H $ and subsequently force player $ 2 $ to invest. This way player 1 can free-ride on player $ 2 $'s investment.

Now consider $ \tilde{p}^{1H} $ plotted in Fig.~\ref{fig:combined}.  
For $ \delta=0 $ we know that not contributing is profitable, however as $ \delta $ increases from $ 0 $, players are mindful of future rewards and thus are willing to contribute at certain beliefs.
Specifically, coordination via signaling is evident here. Although it is instantaneously not profitable to contribute if player $ 1 $'s type is $ x^H $, by contributing at higher values of $ \pi^2 $ (i.e., player $ 2 $'s type is likely $ x^H $) and low $ \pi^1 $, player $ 1 $ coordinates with player $ 2 $ to achieve net profit greater than $ 0 $ (reward when no one contributes). This is possible since the loss when contributing is $ -0.2 $ whereas the profit from free-riding on player $ 2 $'s contribution is $ 1 $.

Under the equilibrium strategy, beliefs $ \underline{\Pi}_t $ form a Markov chain. One can trace this Markov chain to study the signaling effect at equilibrium. On numerically simulating this Markov chain for the above example (at $ \delta= 0.95 $) we observe that for almost all initial beliefs, within a few rounds players completely learn each other's private type truthfully (or at least with very high probability). In other words, players manage to reveal their private type via their actions at equilibrium and to such an extent that it negates any possibly incorrect initial belief about their type.

As a measure of cooperative coordination at equilibrium one can perform the following calculation. Compare the value function $ V^1(\cdot,x) $ of agent $ 1 $ arising out of the fixed-point equation, for $ \delta = 0.95 $ and $ x \in \{x^H,x^L\} $  (normalize it by multiplying with $ 1-\delta $ so that it represents per-round value) with the best possible attainable single-round reward under a symmetric mixed strategy with a) full coordination and b) no coordination. Note that the two cases need not be equilibrium themselves, which is why this will result in a bound on the efficiency of the evaluated equilibria.

In case a), assuming both agents have the same type $ x $, full coordination can lead to the best possible reward of $ \frac{1+1-x}{2} = 1- \frac{x}{2} $ i.e., agent $ 1 $ contributes with probability $ 0.5 $ and agent $ 2 $ contributes with probability $ 0.5 $ but in a coordinated manner so that it doesn't overlap with agent $ 1 $ contributing. 

In case b) when agents do not coordinate and invest with probability $ p $ each, then the expected single-round reward is $ p(1-x) + p(1-p) $. The maximum possible value of this expression is $ (1-\frac{x}{2})^2 $.

For $ x=x^L=0.2 $, the range of values of $ V^1(\pi_1,\pi_2,x^L) $ over $ (\pi_1,\pi_2) \in [0,1]^2 $ is $ [0.865,0.894] $. 
Whereas full coordination produces $ 0.9 $ and no coordination $ 0.81 $. It is thus evident that agents at equilibrium end up achieving reward close to the best possible and gain significantly compared to the strategy of no coordination.

Similarly for $ x = x^H = 1.2 $ the range is $ [0.3,0.395] $. Whereas full coordination produces $ 0.4 $ and no coordination $ 0.16 $. The gain via coordination is evident here too.

\section{Conclusion}
\label{sec:Conc}

%
%
%
%
%
%

In this paper we presented a methodology for evaluating SPBE for games with asymmetric information and independent private types evolving as controlled Markov processes. The main contribution is a time decomposition akin to dynamic programming.
This decomposition allows one to find SPBE that exhibit signaling behavior with linear complexity in the time horizon.
Using this methodology, dynamic LQG games with asymmetric information are studied in~\cite{VaAn16a} where it is shown that under certain conditions, there exists an SPBE of the game with strategies being linear in players' private types. In~\cite{VaAn16b}, authors extend the finite-horizon model in this paper such that players do not observe their own types, rather make independent noisy observations of their types. An analogous backward-forward algorithm is presented for that model.
It is worth noting that although structured strategies are useful in making the equilibrium finding process tractable, no claim can be made about whether the resulting equilibrium outcomes are better or worse than those corresponding to general strategies. We believe this is an interesting future research direction.
Another interesting future direction is dynamic mechanism design for asymmetric information systems. 

	\appendices
			\section*{Acknowledgment}
			The authors wish to acknowledge Vijay Subramanian for his contribution to the paper.
Achilleas Anastasopoulos wishes to acknowledge Ashutosh Nayyar for the fruitful discussion and criticism of an early draft of this work presented during the ITA 2012 conference.

\section{Proof of Lemma~\ref{fact:G2S}}
	\label{app:A1}	
We prove this Lemma in the following steps.

\begin{itemize}
\item[(a)] In Claim~\ref{claim:CondInd}, we prove that for any policy profile $g$ and $\forall t \in \mathcal{T}$, $x_{1:t}^{i}$ for $i\in \cN$ are conditionally independent given the common information $a_{1:t}$.
 \item[(b)] In Claim~\ref{claim:B2}, using Claim~\ref{claim:CondInd}, we prove that for every fixed strategy $g^{-i}$ of the players $-i$, $((A_{1:t-1},X_t^i), A_t^i )_{t\in \mathcal{T}}$ is a controlled Markov process for player $i$.
\item[(c)] For a given policy $g$, we define a policy $s^i$ of player $i$ from $g$ as $s^i_t(a_t^i|a_{1:t-1},x_t^i) \defeq \mP^g(a_t^i|a_{1:t-1},x_t^i)$.
\item[(d)] In Claim~\ref{claim:B3}, we prove that the dynamics of this controlled Markov process
 $ \big((A_{1:t-1}, X_t^i ), A_t^i\big)_{t\in \mathcal{T}}$ under $(s^ig^{-i})$ are same as under $g$ i.e. $ \mP^{s^ig^{-i}} (x_t^i ,x_{t+1}^i, a_{1:t}) = \mP^g(x_t^i, x_{t+1}^i, a_{1:t} )$.
\item[(e)] In Claim~\ref{claim:B4}, we prove that w.r.t. random variables $(x_t,a_t)$, $x_t^i$ is sufficient for player $i$'s private information history $x_{1:t}^{i}$  i.e. $ \mP^g(x_t,a_t|a_{1:t-1},x_{1:t}^{i},a_t^i) = \mP^{g^{-i}}(x_t,a_t|a_{1:t-1},x_t^i,a_t^i)$.
\item[(f)] From (c), (d) and (e) we then prove the result of the lemma that $ \mP^{s^ig^{-i}}(x_t, a_t) = \mP^g(x_t, a_t)$.\\
\end{itemize}

\begin{claim}
	For any policy profile $g$ and $\forall t$,
	\eq{
	\mP^g(x_{1:t}|a_{1:t-1}) = \prod_{i=1}^N \mP^{g^i}(x_{1:t}^{i}|a_{1:t-1})
	}
	\label{claim:CondInd}
	\end{claim}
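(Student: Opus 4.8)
\textbf{Proof proposal for Claim~\ref{claim:CondInd}.}

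The plan is to prove the claim by induction on $t$, since the claim asserts a conditional-independence property that should be preserved by the system dynamics and by Bayesian updating. The key structural facts I will exploit are: (i) the initial types $x_1^i$ are independent across $i$ (the prior factorizes as $\prod_i Q_1^i(x_1^i)$); (ii) given $x_{t-1}$ and $a_{t-1}$, the new types $x_t^i$ evolve through independent kernels $Q_t^i(x_t^i \mid x_{t-1}^i, a_{t-1})$; and (iii) at time $t$ each action $a_t^i$ is drawn according to $g_t^i(\cdot \mid a_{1:t-1}, x_{1:t}^i)$, i.e. conditioned on the common information it depends only on player $i$'s own private history. Together (ii) and (iii) say that, conditioned on the common information, the ``update'' from the type histories at time $t-1$ to the type histories at time $t$ together with the new action profile factorizes across players.

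The base case $t=1$ is exactly the prior factorization in the model: $P^g(x_1) = \prod_{i=1}^N Q_1^i(x_1^i) = \prod_{i=1}^N P^{g^i}(x_1^i)$ (here $a_{1:0}$ is the empty vector, so there is nothing to condition on). For the inductive step, assume $P^g(x_{1:t-1} \mid a_{1:t-2}) = \prod_{i=1}^N P^{g^i}(x_{1:t-1}^i \mid a_{1:t-2})$. I would then write
\eq{
P^g(x_{1:t}, a_{t-1} \mid a_{1:t-2}) = P^g(x_{1:t-1}\mid a_{1:t-2})\, \Big(\prod_{i=1}^N g_{t-1}^i(a_{t-1}^i \mid a_{1:t-2}, x_{1:t-1}^i)\Big)\Big(\prod_{i=1}^N Q_t^i(x_t^i \mid x_{t-1}^i, a_{t-1})\Big),
}
using the chain rule, the independent-kernel transition, and the product form $\beta_{t-1}^{-i}$-style factorization of the simultaneous-move action generation. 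Substituting the inductive hypothesis, the right-hand side becomes a product over $i$ of terms each depending only on $(x_{1:t}^i, a_{1:t-1})$. Dividing by $P^g(a_{t-1}\mid a_{1:t-2})$ (which is a normalization constant not depending on the types) to condition on $a_{1:t-1}$, and noting that the resulting per-player factors are, by their very form, proportional to $P^{g^i}(x_{1:t}^i \mid a_{1:t-1})$ (the same computation carried out in the single-player-$i$ marginal model, which depends only on $g^i$), gives the claimed factorization at time $t$. One must also check the boundary/degenerate case where $P^g(a_{1:t-1})=0$, in which case the conditional probabilities can be defined arbitrarily consistently with the product form.

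The main obstacle I anticipate is bookkeeping rather than conceptual: making precise that the per-player factor obtained after normalization is \emph{exactly} $P^{g^i}(x_{1:t}^i \mid a_{1:t-1})$ and depends on $g$ only through $g^i$, i.e. that the normalizing denominator also factorizes appropriately across players. This requires observing that $P^g(a_{1:t-1}) = \sum_{x_{1:t-1}} \prod_i [\,\cdots\,] = \prod_i \big(\sum_{x_{1:t-1}^i}[\cdots]\big) = \prod_i P^{g^i}(a_{1:t-1})$ — i.e. the common-information marginal itself factorizes — so that conditioning preserves the product structure and identifies each factor with the corresponding single-agent quantity. Once this factorization-of-the-denominator point is nailed down, the induction closes cleanly and the remaining claims of Fact~\ref{fact:G2S} (the Markov property in Claim~\ref{claim:B2}, etc.) follow from it.
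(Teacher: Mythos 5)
Your proposal is correct in substance, but it organizes the argument differently from the paper: you prove the factorization by induction on $t$, propagating the product form through one transition-plus-action step at a time, whereas the paper proves it in a single shot by writing out the full joint likelihood $P^g(x_{1:t},a_{1:t-1})$ as a product over players of terms $Q_1^i(x^i_1)g^i_1(a_1^i|x_{1}^{i})\prod_{n=2}^t Q_n^i(x^i_{n}|x^i_{n-1},a_{n-1}) g^i_n(a_n^i|a_{1:n-1},x_{1:n}^{i})$ and then applying Bayes' rule, observing that the normalizing sum over $\bar{x}_{1:t}$ distributes into a product of per-player sums. The key technical point is identical in both arguments and you correctly identify it: the denominator factorizes across players, so conditioning on $a_{1:t-1}$ preserves the product structure and each factor depends on $g$ only through $g^i$. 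The direct computation is arguably cleaner here because the whole likelihood is available in closed form; your induction buys nothing extra but is equally valid. One small notational caution: your identity $P^g(a_{1:t-1})=\prod_i P^{g^i}(a_{1:t-1})$ should not be read literally, since the per-player factor $\sum_{x_{1:t-1}^i}[\cdots]$ is not a probability distribution over $a_{1:t-1}$ in any single-agent model (player $i$'s kernels do not generate $a^{-i}$, so it does not normalize over action sequences); what is true, and all you need, is that the sum factorizes into per-player terms so that each ratio $P^g(x_{1:t}^i\mid a_{1:t-1})$ equals the corresponding quantity computed from $g^i$ alone, which is exactly how the paper (and its notation $P^{g^i}(x_{1:t}^{i}|a_{1:t-1})$) should be understood. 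Your handling of the degenerate case $P^g(a_{1:t-1})=0$ is a reasonable addition that the paper leaves implicit.
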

	\begin{IEEEproof}
	\seq{
	\eq{
	\mP^g(x_{1:t}|a_{1:t-1})
	&= \frac{\mP^g(x_{1:t},a_{1:t-1})}{\sum_{\bar{x}_{1:t}} \mP^g(\bar{x}_{1:t},a_{1:t-1})} \\
	&= \frac{\prod_{i=1}^N \left(Q_1^i(x^i_1)g^i_1(a_1^i|x_{1}^{i})\prod_{n=2}^t Q_n^i(x^i_{n}|x^i_{n-1},a_{n-1}) g^i_n(a_n^i|a_{1:n-1},x_{1:n}^{i}) \right)}{\sum_{\bar{x}_{1:t}} \prod_{i=1}^N \left(Q^i(\bar{x}^i_1)g^i_1(a_1^i|\bar{x}_{1}^{i})\prod_{n=2}^t Q_n^i(\bar{x}^i_{n}|\bar{x}^i_{n-1},a_{n-1}) g^i_n(a_n^i|a_{1:n-1},\bar{x}_{1:n}^{i}) \right)}\\
	&= \frac{\prod_{i=1}^N \left(Q_1^i(x^i_1)g^i_1(a_1^i|x_{1}^{i})\prod_{n=2}^t Q_n^i(x^i_{n}|x^i_{n-1},a_{n-1}) g^i_n(a_n^i|a_{1:n-1},x_{1:n}^{i}) \right)}{ \prod_{i=1}^N \left(\sum_{\bar{x}_{1:t}^{i}}Q^i(\bar{x}^i_1)g^i_1(a_1^i|\bar{x}_{1}^{i})\prod_{n=2}^t Q_n^i(\bar{x}^i_{n}|\bar{x}^i_{n-1}, a_{n-1}) g^i_n(a_n^i|a_{1:n-1},\bar{x}_{1:n}^{i})\right)}\\
	&= \prod_{i=1}^N\frac{ Q_1^i(x^i_1)g^i_1(a_1^i|x_{1}^{i})\prod_{n=2}^t Q_n^i(x^i_{n}|x^i_{n-1},a_{n-1}) g^i_n(a_n^i|a_{1:n-1},x_{1:n}^{i}) }{\sum_{\bar{x}_{1:t}^{i}}Q^i(\bar{x}^i_1)g^i_1(a_1^i|\bar{x}_{1}^{i})\prod_{n=2}^t Q_n^i(\bar{x}^i_{n}|\bar{x}^i_{n-1},a_{n-1}) g^i_n(a_n^i|a_{1:n-1},\bar{x}_{1:n}^{i})}\\
	&= \prod_{i=1}^N \mP^{g^i}(x_{1:t}^{i}|a_{1:t-1})
	}
	}
	\end{IEEEproof}
		\begin{claim}
	For a fixed $g^{-i}$, $\{(A_{1:t-1},X_t^i ), A_t^i \}_t$ is a controlled Markov process with state $(A_{1:t-1},X_t^i )$ and control action $A_t^i$.
	\label{claim:B2}
	\end{claim}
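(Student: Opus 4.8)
The plan is to exhibit, for each $t$, a transition kernel $\Phi_t$ that depends only on the fixed profile $g^{-i}$ and such that for \emph{every} strategy $g^i$ of player~$i$,
\[
P^{g^i g^{-i}}(a_t^{-i}, x_{t+1}^i \mid a_{1:t-1}, x_{1:t}^i, a_t^i) \;=\; \Phi_t(a_t^{-i}, x_{t+1}^i \mid a_{1:t-1}, x_t^i, a_t^i).
\]
This is exactly the controlled-Markov property claimed: the next state $(a_{1:t}, x_{t+1}^i)$ is the current state $(a_{1:t-1}, x_t^i)$ with the control $a_t^i$ appended deterministically and the random pair $(a_t^{-i}, x_{t+1}^i)$ adjoined, while conditioning on the full state--control history $\{(a_{1:n-1},x_n^i),a_n^i\}_{n\leq t}$ amounts to conditioning on $(a_{1:t-1}, x_{1:t}^i, a_t^i)$; the initialization $X_1^i\sim Q_1^i$ with empty common history handles $t=1$.

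To establish the display I would expand the left-hand side by the law of total probability over $x_{1:t}^{-i}$ and split each summand as a product of three conditional probabilities, $P^{g^i g^{-i}}(x_{1:t}^{-i}\mid a_{1:t-1}, x_{1:t}^i, a_t^i)$, then $P^{g^i g^{-i}}(a_t^{-i}\mid a_{1:t-1}, x_{1:t}, a_t^i)$, then $P^{g^i g^{-i}}(x_{t+1}^i\mid a_{1:t-1}, x_{1:t}, a_t)$. For the first, note $a_t^i\sim g_t^i(\cdot\mid a_{1:t-1},x_{1:t}^i)$, hence $a_t^i$ is conditionally independent of $x_{1:t}^{-i}$ given $(a_{1:t-1},x_{1:t}^i)$, so this factor equals $P^{g^i g^{-i}}(x_{1:t}^{-i}\mid a_{1:t-1}, x_{1:t}^i)$, and dividing the identity of Claim~\ref{claim:CondInd} by $P^{g^i}(x_{1:t}^i\mid a_{1:t-1})$ identifies it with $\prod_{j\neq i}P^{g^j}(x_{1:t}^j\mid a_{1:t-1})$ --- a quantity built from $g^{-i}$ and $a_{1:t-1}$ only. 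The second factor, by conditional independence of the time-$t$ action draws across players given $(a_{1:t-1},x_{1:t})$ together with their independence of $a_t^i$, equals $\prod_{j\neq i}g_t^j(a_t^j\mid a_{1:t-1},x_{1:t}^j)$. The third factor is $Q_{t+1}^i(x_{t+1}^i\mid x_t^i,a_t)$ by the conditionally independent controlled-Markov type dynamics. Substituting and pulling $Q_{t+1}^i$ out of the sum yields
\[
Q_{t+1}^i(x_{t+1}^i\mid x_t^i,a_t^i,a_t^{-i})\sum_{x_{1:t}^{-i}}\Bigl(\prod_{j\neq i}P^{g^j}(x_{1:t}^j\mid a_{1:t-1})\Bigr)\prod_{j\neq i}g_t^j(a_t^j\mid a_{1:t-1},x_{1:t}^j),
\]
which depends on the conditioning only through $(a_{1:t-1},x_t^i,a_t^i)$ and on the policies only through $g^{-i}$; this is the desired $\Phi_t$.

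I do not anticipate a deep obstacle --- the argument is conditional-independence bookkeeping --- but two points warrant care. First, in handling the factor $P^{g^i g^{-i}}(x_{1:t}^{-i}\mid a_{1:t-1}, x_{1:t}^i, a_t^i)$ one must combine the fact that $a_t^i$ carries no information about $x_{1:t}^{-i}$ beyond $(a_{1:t-1},x_{1:t}^i)$ with Claim~\ref{claim:CondInd}; the latter is precisely where the hypothesis of conditionally independent types enters, guaranteeing that the conditional law of the other players' private histories given common information factorizes and is unaffected by $g^i$. Second, it is worth emphasizing that the state carries the entire common history $a_{1:t-1}$ --- the summarizing belief $\pi_t$ of Fact~\ref{fact:L1} is not available at this stage --- so nothing is being compressed here; the Markov property holds over the growing but well-structured state $(a_{1:t-1},x_t^i)$.
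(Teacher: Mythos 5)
Your proposal is correct and follows essentially the same route as the paper: expand over $x_{1:t}^{-i}$, use Claim~\ref{claim:CondInd} (plus the fact that $a_t^i$ is drawn from $g_t^i(\cdot|a_{1:t-1},x_{1:t}^i)$ and hence adds no information about $x_{1:t}^{-i}$) to factor the summand into $\prod_{j\neq i}P^{g^j}(x_{1:t}^j|a_{1:t-1})$, the other players' action kernels, and $Q^i$, and observe that the resulting kernel depends only on $g^{-i}$ and $(a_{1:t-1},x_t^i,a_t^i)$. The only difference is presentational: you spell out the three-factor chain-rule decomposition that the paper performs in a single step.
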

	\begin{IEEEproof}
	\seq{
	\eq{
	&\mP^g(\tilde{a}_{1:t},x_{t+1}^i|a_{1:t-1}, x_{1:t}^{i}, a_{1:t}^{i}) \nonumber \\
	&= \sum_{x_{1:t}^{-i}} \mP^g(\tilde{a}_{1:t}, x_{t+1}^i ,x_{1:t}^{-i}|a_{1:t-1},x_{1:t}^{i},a^i_t )\\
	&= \sum_{  x_{1:t}^{-i} } \mP^g(\tilde{a}_t^{-i},x_{t+1}^i ,x_{1:t}^{-i} |a_{1:t-1},x_{1:t}^{i},a^i_t ) I_{(a_{1:t-1},a^i_t)}(\tilde{a}_{1:t-1},\tilde{a}_t^i)\\
	&= \sum_{x_{1:t}^{-i} } \mP^{g^{-i}}(x_{1:t}^{-i}|a_{1:t-1}) \left( \prod_{j\neq i} g^j_t(\tilde{a}_t^j|a_{1:t-1},x_{1:t}^{j})\right) Q_t^i(x_{t+1}^i|x_t^i ,a^i_t,\tilde{a}^{-i}_t) I_{(a_{1:t-1},a^i_t)}(\tilde{a}_{1:t-1},\tilde{a}_t^i) \label{eq:XiiCmp1} \\
	&= \mP^{g^{-i}}(\tilde{a}_{1:t},x_{t+1}^i| a_{1:t-1},x^{i}_t, a^i_t),
	}
	}
where \eqref{eq:XiiCmp1} follows from Claim~\ref{claim:CondInd} since $x_{1:t}^{-i}$ is conditionally independent of $x_{1:t}^{i}$ given $a_{1:t-1}$ and the corresponding probability is only a function of $g^{-i}$.
	\end{IEEEproof}

 For any given policy profile $g$, we construct a policy $s^i$ in the following way,
\seq{\label{eq:defSi_main}
	\eq{ s^i_t(a_t^i|a_{1:t-1},x_t^i) &\defeq \mP^g(a_t^i|a_{1:t-1},x_t^i) \\
	&= \frac{\sum_{x_{1:t-1}^{i}} \mP^g(a_t^i ,x_{1:t}^{i}|a_{1:t-1})}{\sum_{\tilde{a}^i_t}\sum_{\tilde{x}_{1:t-1}^{i}} \mP^g(\tilde{a}_t^i, \tilde{x}_{1:t-1}^{i}x_t^i |a_{1:t-1})}\\
	&=\frac{ \sum_{x_{1:t-1}^{i}}\mP^{g^i}(x_{1:t}^{i}|a_{1:t-1}) g^i_t(a_t^i|a_{1:t-1},x_{1:t}^{i}) }{\sum_{\tilde{a}^i_t} \sum_{\tilde{x}_{1:t-1}^{i}}\mP^{g^i}(\tilde{x}_{1:t-1}^{i}x_t^i|a_{1:t-1})  g_t^i(\tilde{a}_t^i|a_{1:t-1},\tilde{x}_{1:t-1}^{i}x_t^i)}\label{eq:Sdef1}\\
	&=\mP^{g^i}(a_t^i|a_{1:t-1},x_t^i) \label{eq:defSi},
	}
	}
	where dependence of \eqref{eq:Sdef1} on only $g^i$ is due to Claim~\ref{claim:CondInd}.
	\begin{claim}
	The dynamics of the Markov process $\{(A_{1:t-1},X_t^i ), A_t^i \}_t$ under $(s^ig^{-i})$ are the same as under $g$ i.e.,
	\eq{
	\mP^{s^ig^{-i}} (x_t^i  ,x_{t+1}^i, a_{1:t} ) = \mP^g(x_t^i, x_{t+1}^i ,a_{1:t} ) \;\;\;\; \forall t \label{eq:D0}
	}
	\label{claim:B3}
	\end{claim}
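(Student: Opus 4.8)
The plan is to prove the claim by induction on $t$, after slightly strengthening it so the induction closes: I will carry along the auxiliary statement $S(t)$, namely that $P^{s^ig^{-i}}(x_t^i, a_{1:t-1}) = P^g(x_t^i, a_{1:t-1})$ for all $x_t^i$ and $a_{1:t-1}$. The base case $S(1)$ is trivial, since both sides equal $Q_1^i(x_1^i)$ (recall $a_{1:0}$ is the empty vector). The asserted identity $P^{s^ig^{-i}}(x_t^i, x_{t+1}^i, a_{1:t}) = P^g(x_t^i, x_{t+1}^i, a_{1:t})$ will then drop out of $S(t)$ through a single application of the same factorization used to pass from $S(t)$ to $S(t+1)$.

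For the inductive step I would expand $P^{s^ig^{-i}}(x_t^i, a_t, x_{t+1}^i, a_{1:t-1})$ by the chain rule in the specific order: $(x_t^i, a_{1:t-1})$ first, then $a_t^i$ given $(x_t^i, a_{1:t-1})$, then the block $(a_t^{-i}, x_{t+1}^i)$ given $(x_t^i, a_{1:t-1}, a_t^i)$. Each of the three factors is handled by one ingredient already in place. The first factor coincides with its $g$-analogue by the inductive hypothesis $S(t)$. The second factor equals $s^i_t(a_t^i|a_{1:t-1},x_t^i)$, because under the policy $s^i$ player $i$'s time-$t$ action is conditionally independent of $x_{1:t-1}^i$ given $(a_{1:t-1},x_t^i)$, and by the definition of $s^i$ in (\ref{eq:defSi}) this in turn equals $P^g(a_t^i|a_{1:t-1},x_t^i)$. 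The third factor, by Claim~\ref{claim:B2}, is a kernel in $(a_t^{-i}, x_{t+1}^i)$ that depends on the policy profile only through $g^{-i}$ — it is literally the transition kernel of the controlled Markov process identified there — so it is identical under $s^ig^{-i}$ and under $g$. Multiplying the three resulting $g$-valued factors back together and summing over $x_t^i$ reassembles $P^g(x_{t+1}^i, a_{1:t})$, which is $S(t+1)$; leaving the sum over $x_{t+1}^i$ uncollapsed gives Claim~\ref{claim:B3} in the same stroke.

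The one step that requires care is the choice of conditioning order and the recognition that Claim~\ref{claim:B2} is exactly what is needed for the $-i$-block factor: it states that, given $(a_{1:t-1}, x_t^i, a_t^i)$, the joint law of $(a_t^{-i}, x_{t+1}^i)$ is a function of $g^{-i}$ alone — ultimately because, by Claim~\ref{claim:CondInd}, $x_{1:t}^{-i}$ is conditionally independent of $x_{1:t}^i$ given $a_{1:t-1}$ — and this is precisely the license to swap $s^ig^{-i}$ for $g$ in that factor. A harmless technical point is that some conditional probabilities are undefined on zero-probability histories; but those are exactly the histories on which the denominator defining $s^i$ vanishes, so they carry no mass under either $s^ig^{-i}$ or $g$, and all identities hold in the a.s.\ sense as stipulated. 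I do not anticipate any computation beyond the chain-rule bookkeeping: the whole argument should be a short chain of equalities with the three justifications attached.
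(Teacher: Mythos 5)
Your proposal is correct and follows essentially the same route as the paper: an induction whose step applies the chain rule in the order (state, own action, rest-of-system transition), identifies the middle factor with $s^i_t=P^{g}(a_t^i|a_{1:t-1},x_t^i)$ by the definition in (\ref{eq:defSi}), and invokes Claim~\ref{claim:B2} to see that the last factor depends only on $g^{-i}$. The only cosmetic difference is that you carry the marginal equality $P^{s^ig^{-i}}(x_t^i,a_{1:t-1})=P^g(x_t^i,a_{1:t-1})$ as the explicit induction hypothesis, whereas the paper inducts on the full joint and reads the marginal off as a consequence.
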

	\begin{IEEEproof}
	We prove this by induction. Clearly,
	\eq{
	\mP^g(x_{1}^i) &= \mP^{s^ig^{-i}}(x^i_{1}) = Q_1^i(x^i_1).\label{eq:D1}
	}
	Now suppose \eqref{eq:D0} is true for $t-1$ which also implies that the marginals $\mP^g(x_{t}^i,a_{1:t-1}) = \mP^{s^ig^{-i}}(x^i_{t},a_{1:t-1})$. Then
	\seq{
	\eq{
	\mP^g(x_t^i ,a_{1:t-1}, x_{t+1}^i,a_t ) &= \mP^g(x^i_{t},a_{1:t-1})\mP^g(a^i_t| a_{1:t-1}, x_t^i )\mP^g(x_{t+1}^i,a_{1:t}| x_t^i, a_{1:t-1}, a^i_t )\\
	&= \mP^{s^ig^{-i}}(x^i_{t},a_{1:t-1})s^i_t(a^i_t|a_{1:t-1},x_t^i )\mP^{g^{-i}}(x_{t+1}^i,a_{1:t}| x_t^i, a_{1:t-1}, a^i_t )\label{eq:D2}\\
	&= \mP^{s^ig^{-i}}(x_t^i ,a_{1:t-1} , x_{t+1}^i,a_t ),
	}
	}
	where \eqref{eq:D2} is true from induction hypothesis, definition of $s^i$ in \eqref{eq:defSi} and since $\{(a_{1:t-1},x_t^i ), a_t^i \}_t$ is a controlled Markov process as proved in Claim~\ref{claim:B2} and its update kernel does not depend on policy $g^i$.This completes the induction step.
	
	\end{IEEEproof}
	
	\begin{claim}
	For any policy $g$,
	\eq{
	\mP^g(\tilde{x}_t ,\tilde{a}_t|a_{1:t-1},x_{1:t}^{i},a_t^i) = \mP^{g^{-i}}(\tilde{x}_t,\tilde{a}_t|a_{1:t-1},x_t^i,a_t^i). \label{eq:D5}
	}
	\label{claim:B4}
	\end{claim}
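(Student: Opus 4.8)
The plan is to split $(\tilde{x}_t,\tilde{a}_t)$ into the coordinates that the conditioning already pins down and the rest, and then to show that the conditional law of the rest is insensitive both to player~$i$'s private history beyond $x_t^i$ and to player~$i$'s own strategy $g^i$. First I would observe that on the event $\{X_{1:t}^i=x_{1:t}^i,\,A_t^i=a_t^i\}$ one has $\tilde{x}_t^i=x_t^i$ and $\tilde{a}_t^i=a_t^i$ with probability one, so both sides of (\ref{eq:D5}) equal $I_{x_t^i}(\tilde{x}_t^i)\,I_{a_t^i}(\tilde{a}_t^i)$ times a conditional law of $(\tilde{x}_t^{-i},\tilde{a}_t^{-i})$; it therefore suffices to show
\[
P^g(\tilde{x}_t^{-i},\tilde{a}_t^{-i}| a_{1:t-1},x_{1:t}^i,a_t^i)=P^{g^{-i}}(\tilde{x}_t^{-i},\tilde{a}_t^{-i}| a_{1:t-1},x_t^i,a_t^i).
\]

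Next I would upgrade Claim~\ref{claim:CondInd} from types to type--action pairs: conditioned on $a_{1:t-1}$ and $x_{1:t}$, the period-$t$ actions are independent across players with $A_t^j\sim g_t^j(\cdot| a_{1:t-1},x_{1:t}^j)$, so multiplying the identity of Claim~\ref{claim:CondInd} by $\prod_j g_t^j(a_t^j| a_{1:t-1},x_{1:t}^j)$ gives
\[
P^g(x_{1:t},a_t| a_{1:t-1})=\prod_{j=1}^N\left(P^{g^j}(x_{1:t}^j| a_{1:t-1})\,g_t^j(a_t^j| a_{1:t-1},x_{1:t}^j)\right),
\]
a product whose $j$-th factor depends only on $g^j$ and on $(x_{1:t}^j,a_t^j,a_{1:t-1})$. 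Each $j\neq i$ factor sums to $1$ over its own $(x_{1:t}^j,a_t^j)$, which identifies the $i$-factor as $P^g(x_{1:t}^i,a_t^i| a_{1:t-1})$ and the product of the others as $P^g(x_{1:t}^{-i},a_t^{-i}| a_{1:t-1})$. This yields both (i) the conditional independence of $(x_{1:t}^i,a_t^i)$ and $(x_{1:t}^{-i},a_t^{-i})$ given $a_{1:t-1}$, and (ii) the fact that $P^g(x_{1:t}^{-i},a_t^{-i}| a_{1:t-1})$ depends on $g$ only through $g^{-i}$, which is exactly what licenses the symbol $P^{g^{-i}}$ (to be read throughout as ``$P^{g}$ evaluated under any profile whose $-i$-component equals the prescribed $g^{-i}$''). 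Using (i), the extra conditioning on $x_{1:t}^i$ on the left -- and on the coarser $x_t^i$ on the right, which is a function of $x_{1:t}^i$ -- can be dropped, so by (ii) both $P^g(x_{1:t}^{-i},a_t^{-i}| a_{1:t-1},x_{1:t}^i,a_t^i)$ and $P^g(x_{1:t}^{-i},a_t^{-i}| a_{1:t-1},x_t^i,a_t^i)$ equal $P^{g^{-i}}(x_{1:t}^{-i},a_t^{-i}| a_{1:t-1})$. Summing over $x_{1:t-1}^{-i}$ and re-attaching the two indicator factors gives (\ref{eq:D5}).

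The only slightly delicate bookkeeping is verifying that the $j\neq i$ factors really integrate to $1$, so that the $g^i$-dependent factor genuinely cancels in the defining ratio of the conditional probability; this is the step that makes $P^{g^{-i}}$ a well-defined object rather than a mere abbreviation. The substantive point, and the only place where conditional independence of the type processes is invoked, is the factorization above, which is in effect a replay of the computation behind Claim~\ref{claim:CondInd} with the period-$t$ actions appended; everything after it is Bayes' rule and marginalization, so no serious obstacle remains.
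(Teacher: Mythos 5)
Your proposal is correct and follows essentially the same route as the paper: peel off the indicator on player $i$'s own coordinates, then use the product form of Claim~\ref{claim:CondInd} together with the factorized action kernels to show the conditional law of $(\tilde{x}_t^{-i},\tilde{a}_t^{-i})$ given $a_{1:t-1}$ is unaffected by conditioning on player $i$'s private information and depends on $g$ only through $g^{-i}$. Your packaging (first extending Claim~\ref{claim:CondInd} to type--action pairs, then marginalizing) is just a reordering of the paper's computation, and your remark about what makes $P^{g^{-i}}$ well defined is exactly the point the paper leaves implicit.
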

	\begin{IEEEproof}
	\eq{
	\mP^g(\tilde{x}_t ,\tilde{a}_t|a_{1:t-1},x_{1:t}^{i},a_t^i) = I_{x_{t}^{i},a^i_t}(\tilde{x}^i_t,\tilde{a}^i_t) \mP^g(\tilde{x}^{-i}_t ,\tilde{a}^{-i}_t|a_{1:t-1},x_{1:t}^{i}).
		}
	Now
	\seq{
	\eq{
	\mP^g(\tilde{x}^{-i}_t, \tilde{a}^{-i}_t|a_{1:t-1},x_{1:t}^{i}) &= \sum_{\tilde{x}_{1:t-1}^{-i}} \mP^g( \tilde{x}_{1:t}^{-i}, \tilde{a}^{-i}_t|a_{1:t-1},x_{1:t}^{i})\\
	&=\sum_{\tilde{x}_{1:t-1}^{-i}} \mP^g( \tilde{x}_{1:t}^{-i}|a_{1:t-1},x_{1:t}^{i}) \left( \prod_{j\neq i} g^j_t(\tilde{a}_t^j|a_{1:t-1},\tilde{x}_{1:t}^{j})\right)\\
	&=\sum_{\tilde{x}_{1:t}^{-i}} \mP^{g^{-i}}( \tilde{x}_{1:t}^{-i}|a_{1:t-1}) \left( \prod_{j\neq i} g^j_t(\tilde{a}_t^j|a_{1:t-1},\tilde{x}_{1:t}^{j})\right)\label{eq:D3}\\
	&=\mP^{g^{-i}}(\tilde{x}^{-i}_t, \tilde{a}^{-i}_t|a_{1:t-1})
	}
	}
	where \eqref{eq:D3} follows from Claim~\ref{claim:CondInd}.
	
	Hence
	\seq{
	\eq{
	\mP^g(\tilde{x}_t, \tilde{a}_t|a_{1:t-1},x_{1:t}^{i},a_t^i) &=I_{x_{t}^{i},a^i_t}(\tilde{x}^i_t,\tilde{a}^i_t) \mP^{g^{-i}}(\tilde{x}^{-i}_t ,\tilde{a}^{-i}_t|a_{1:t-1})\\
	&=\mP^{g^{-i}}(\tilde{x}_t, \tilde{a}_t|a_{1:t-1},x_t^i,a_t^i)
	}
	}
		\end{IEEEproof}
	Finally,
	\seq{
	\eq{\mP^g(\tilde{x}_t, \tilde{a}_t) &= \sum_{a_{1:t-1}x_{1:t}^{i}a_t^i}\mP^g(\tilde{x}_t,\tilde{a}_t|a_{1:t-1},x_{1:t}^{i},a_t^i) \mP^g(a_{1:t-1},x_{1:t}^{i},a_t^i)\\
	&= \sum_{a_{1:t-1}x_{1:t}^{i},a_t^i}\mP^{g^{-i}}(\tilde{x}_t,\tilde{a}_t|a_{1:t-1},x_t^i,a_t^i) \mP^g(a_{1:t-1},x_{1:t}^{i},a_t^i)\label{eq:D6}\\
	&= \sum_{a_{1:t-1}x^i_t ,a_t^i}\mP^{g^{-i}}(\tilde{x}_t,\tilde{a}_t|a_{1:t-1},x_t^i,a_t^i) \mP^g(a_{1:t-1},x^i_t,a_t^i) \label{eq:D7}\\
	&= \sum_{a_{1:t-1}x^i_t ,a_t^i}\mP^{g^{-i}}(\tilde{x}_t,\tilde{a}_t|a_{1:t-1},x_t^i,a_t^i) \mP^{s^ig^{-i}}(a_{1:t-1},x^i_t,a_t^i) \label{eq:D8}\\
	&= \mP^{s^ig^{-i}}(\tilde{x}_t, \tilde{a}_t).
}
}
where \eqref{eq:D6} follows from \eqref{eq:D5} in Claim~\ref{claim:B4} and \eqref{eq:D8} from \eqref{eq:D0} in Claim~\ref{claim:B3}.

\section{Proof of Lemma~\ref{fact:L1}}
\label{app:A2}
For this proof we will assume the common agents strategies to be probabilistic as opposed to being deterministic, as was the case in Section~\ref{sec:StructuralResults}. This means actions of the common agent, $\gamma_t^i$'s are generated probabilistically from $\psi^i$ as $\Gamma_t^i \sim \psi_t^i(\cdot|a_{1:t-1})$, as opposed to being deterministically generated as $\gamma_t^i = \psi_t^i[a_{1:t-1}]$, as before. These two are equivalent ways of generating actions $a_t^i$ from $a_{1:t-1}$ and $x_t^i$. We avoid using the probabilistic strategies of common agent throughout the main text for ease of exposition, and because it conceptually does not affect the results.

\begin{IEEEproof}
We prove this lemma in the following steps.
We view this problem from the perspective of a common agent. Let $\psi$ be the coordinator's policy corresponding to policy profile $g$. Let $\pi^{i}_t(x_t^i) = \mP^{\psi^i}(x_t^i|a_{1:t-1})$.

\begin{itemize}
\item[(a)] In Claim~\ref{claim:C1}, we show that $\pi_t$ can be factorized as $\pi_t(x_t) = \prod_{i=1}^N \pi_t^{{i}}(x_t^i)$ where each $\pi_t^{{i}}$ can be updated through an update function  $\pi^{i}_{t+1} = F^i(\pi_t^{{i}},\gamma^i_t,a_t)$ and $F^i$ is independent of common agent's policy $\psi$.
\item[(b)] In Claim~\ref{claim:C2}, we prove that $(\Pi_t, \Gamma_t )_{t \in \mathcal{T}}$ is a controlled Markov process.
\item[(c)] We construct a policy profile $\theta$ from $g$ such that  $ \theta_t(d\gamma_t|\pi_t) \defeq \mP^{\psi}(d\gamma_t|\pi_t)$.
\item[(d)] In Claim~\ref{claim:C3}, we prove that dynamics of this Markov process $(\Pi_t, \Gamma_t )_{t \in \mathcal{T}}$ under $\theta$ is same as under $\psi$ i.e. $\mP^{\theta}(d\pi_t ,d\gamma_t ,d\pi_{t+1} ) = \mP^{\psi} (d\pi_t, d\gamma_t, d\pi_{t+1})$.
\item[(e)] In Claim~\ref{claim:C4}, we prove that with respect to random variables $(X_t,A_t)$, $\pi_t$ can summarize common information $a_{1:t-1}$  i.e. $\mP^{\psi}(x_t, a_t|a_{1:t-1}, \gamma_t) = \mP(x_t,a_t|\pi_t,\gamma_t )$.
\item[(f)] From (c), (d) and (e) we then prove the result of the lemma that $\mP^{\psi}(x_t, a_t) = \mP^{\theta}(x_t, a_t)$ which is equivalent to $\mP^g(x_t, a_t) = \mP^m(x_t, a_t)$, where $m$ is the policy profile of players corresponding to $\theta$.\\
\end{itemize}

\begin{claim}
	$\pi_t$ can be factorized as $\pi_t(x_t) = \prod_{i=1}^N \pi_t^{{i}}(x_t^i)$ where each $\pi_t^{{i}}$ can be updated through an update function  $\pi^{i}_{t+1} = F^i(\pi^{i}_t,\gamma^i_t,a_t)$ and $F^i$ is independent of common agent's policy $\psi$.	
We also say $\underline{\pi}_{t+1} =  \underline{F}(\underline{\pi}_t,\gamma_t,a_t)$.

	\label{claim:C1}
	\end{claim}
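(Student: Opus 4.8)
\textbf{Proof proposal for Claim~\ref{claim:C1}.}
The plan is a straightforward induction on $t$, with the product structure of the type dynamics doing all the work. For the base case $t=1$, the common information $a_{1:0}$ is empty, so $\pi_1(x_1) = P(x_1) = \prod_{i=1}^N Q_1^i(x_1^i)$ by the prior assumption in the model; hence $\pi_1$ factorizes with $\pi_1^i(x_1^i) = Q_1^i(x_1^i)$. For the inductive step, assume $\pi_t(x_t) = \prod_{i=1}^N \pi_t^i(x_t^i)$. I would compute $\pi_{t+1}(x_{t+1}) = P^{\psi}(x_{t+1}\mid a_{1:t})$ via Bayes' rule, conditioning also on $\gamma_t$ (which the common agent knows), and expanding the one-step transition:
\eq{
P^{\psi}(x_{t+1},a_t\mid a_{1:t-1},\gamma_t)
= \sum_{x_t} \pi_t(x_t)\Big(\prod_{i=1}^N \gamma_t^i(a_t^i\mid x_t^i)\Big)\Big(\prod_{i=1}^N Q_{t+1}^i(x_{t+1}^i\mid x_t^i,a_t)\Big).
}
Substituting $\pi_t(x_t) = \prod_i \pi_t^i(x_t^i)$, the summand becomes a product over $i$ of terms depending only on $(x_t^i,x_{t+1}^i,a_t^i)$, so the sum over $x_t$ splits as a product of sums over each $x_t^i$. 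The same computation with $\sum_{x_{t+1}^i}Q_{t+1}^i(\cdot\mid x_t^i,a_t)=1$ shows the normalizing constant $P^{\psi}(a_t\mid a_{1:t-1},\gamma_t)$ also factorizes over $i$.

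Dividing, I obtain
\eq{
\pi_{t+1}(x_{t+1}) = \prod_{i=1}^N \frac{\sum_{x_t^i}\pi_t^i(x_t^i)\,\gamma_t^i(a_t^i\mid x_t^i)\,Q_{t+1}^i(x_{t+1}^i\mid x_t^i,a_t)}{\sum_{x_t^i}\pi_t^i(x_t^i)\,\gamma_t^i(a_t^i\mid x_t^i)},
}
which proves the factorization $\pi_{t+1}(x_{t+1}) = \prod_i \pi_{t+1}^i(x_{t+1}^i)$ and simultaneously identifies $\bar F$: define $\bar F(\pi_t^i,\gamma_t^i,a_t)$ to be the $i$-th factor above whenever the denominator $\sum_{x_t^i}\pi_t^i(x_t^i)\gamma_t^i(a_t^i\mid x_t^i)$ is strictly positive, and (as in the example with the $Q$-convention) set $\bar F(\pi_t^i,\gamma_t^i,a_t) := \pi_t^i$ otherwise. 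Since this expression involves only the known kernel $Q_{t+1}^i$ and the arguments $(\pi_t^i,\gamma_t^i,a_t)$, the map $\bar F$ is manifestly independent of the common agent's policy $\psi$, and $F$ is then just $F(\underline{\pi}_t,\gamma_t,a_t) = (\bar F(\pi_t^i,\gamma_t^i,a_t))_{i\in\cN}$.

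The only real subtlety — and the step I would treat most carefully — is the degenerate case where the normalizing constant vanishes, i.e.\ the realized $a_t$ has zero probability under $(\pi_t,\gamma_t)$; Bayes' rule does not apply there, which is why the convention $\bar F = \pi_t^i$ (or any fixed choice) is needed, and one notes this event has $P^{\psi}$-probability zero so it is immaterial for the \emph{a.s.} statements. A secondary bookkeeping point is making the conditioning on $\gamma_t$ precise in the probabilistic-common-agent setting used in this appendix; since $\gamma_t$ is generated from $\psi_t(\cdot\mid a_{1:t-1})$ and is observed by the common agent, carrying it through the conditioning above is legitimate and does not disturb the factorization. Everything else is routine algebra.
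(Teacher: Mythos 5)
Your proof is correct and follows essentially the same route as the paper's: induction on $t$, Bayes' rule conditioned on $(a_{1:t-1},\gamma_{1:t})$, and the observation that both the numerator and the normalizing constant split into products over $i$, yielding the per-coordinate update $\bar F$ that depends only on $(\pi_t^i,\gamma_t^i,a_t)$ and the kernel $Q^i$. The one place you diverge is the zero-denominator convention: the paper sets $\pi^i_{t+1}(x_{t+1}^i)=\sum_{x_t^i}\pi_t^i(x_t^i)\,Q_t^i(x_{t+1}^i\mid x_t^i,a_t)$ in that case (propagating the belief through the type kernel), whereas you freeze it at $\pi_t^i$; either choice preserves the factorization and the $\psi$-independence of $\bar F$, so the claim itself is unaffected, but since $\bar F$ is reused on off-equilibrium histories in the backward and forward recursions, the paper's convention (which coincides with yours only when types are static, as in the example) is the one to adopt for consistency.
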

	\begin{IEEEproof}
		We prove this by induction. Since $\pi_1(x_1) = \prod_{i=1}^N Q_t^i(x_1^i)$, the base case is verified. Now suppose  $\pi_{t} = \prod_{i=1}^N \pi_t^{{i}}$. Then,
		\seq{
		\label{eq:piupdate}
	\eq{
	\pi_{t+1}(x_{t+1}) &= \mP^{\psi}(x_{t+1}|a_{1:t},\gamma_{1:t+1}) \\
	&=  \mP^{\psi}(x_{t+1}|a_{1:t},\gamma_{1:t}) \\
	&= \frac{ \sum_{x_t} \mP^{\psi}(x_t,a_t,x_{t+1}|a_{1:t-1},\gamma_{1:t}) }%
	{ \sum_{\tilde{x}_{t+1} \tilde{x}_{t}}\mP^{\psi}(\tilde{x}_t, \tilde{x}_{t+1}, a_t |a_{1:t-1},\gamma_{1:t}) } \label{eq:C1b}\\
	&= \frac{   \sum_{x_t} \pi_t(x_t) \prod_{i=1}^N \gamma_t^i(a_t^i|x_t^i)Q_t^i(x_{t+1}^i|x_t^i, a_t) } %
	{ \sum_{\tilde{x}_t\tilde{x}_{t+1}}\pi_t(\tilde{x}_t) \prod_{i=1}^N \gamma_t^i(a_t^i|\tilde{x}_t^i) Q_t^i(\tilde{x}_{t+1}^i|\tilde{x}_t^i, a_t)}\label{eq:C1c} \\
	&= \prod_{i=1}^N  \frac{  \sum_{x^i_t} \pi_t^{{i}}(x_t^i) \gamma_t^i(a_t^i|x_t^i)Q_t^i(x_{t+1}^i|x_t^i, a_t) } %
	{ \sum_{\tilde{x}_t^i}\pi_t^{{i}}(\tilde{x}_t^i)  \gamma_t^i(a_t^i|\tilde{x}_t^i)}\label{eq:C1a}\\
	&= \prod_{i=1}^N \pi_{t+1}^{{i}}({x_{t+1}^i}), \label{eq:pi_prod}
	}
	}
	where \eqref{eq:C1a} follows from induction hypothesis.  It is assumed in \eqref{eq:C1b}-\eqref{eq:C1a} that the denominator is not 0. If denominator corresponding to any $\gamma_t^i$ is zero, we define
	\eq{
	\pi_{t+1}^i(x_{t+1}^i) &= \sum_{x_t^i} \pi_t^i(x_t^i) Q_t^i(x_{t+1}^i| x_t^i,a_t),
	}
	where $\pi_{t+1}$ still satisfies \eqref{eq:pi_prod}.
	Thus $\pi^i_{t+1} = F^i(\pi^{i}_t,\gamma^i_t,a_t)$ and $\underline{\pi}_{t+1}= \underline{F}(\underline{\pi}_t,\gamma_t,a_1)$ where $F^i$ and $\underline{F}$ are appropriately defined from above.
	\end{IEEEproof}

	\begin{claim}
	$(\Pi_t, \Gamma_t )_{t \in \mathcal{T}}$ is a controlled Markov process with state $\Pi_t$ and control action $\Gamma_t$
	\label{claim:C2}
	\end{claim}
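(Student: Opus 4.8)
The plan is to leverage two ingredients already established: by Claim~\ref{claim:C1}, the update $\Pi_{t+1}=F(\Pi_t,\Gamma_t,A_t)$ is a \emph{deterministic} function of $(\Pi_t,\Gamma_t,A_t)$, and iterating $F$ from the fixed $\pi_1=\prod_{i\in\cN}Q_1^i$ shows $\Pi_t$ is a deterministic function of $(\Gamma_{1:t-1},A_{1:t-1})$, hence $\Pi_{1:t}$ is $\sigma(a_{1:t-1},\gamma_{1:t-1})$-measurable; and the common agent's action $\Gamma_t$ is produced from $a_{1:t-1}$ through $\psi_t$ together with randomization that is exogenous, i.e.\ independent of the type process. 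Because $\Pi_{t+1}$ is a deterministic image of $(\Pi_t,\Gamma_t,A_t)$, it suffices to show that the conditional law of $A_t$ given the entire common-agent history $(a_{1:t-1},\gamma_{1:t})$ depends on that history only through the current pair $(\pi_t,\gamma_t)$, and then push it forward through $a_t\mapsto F(\pi_t,\gamma_t,a_t)$.

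First I would expand
\[
P^{\psi}(a_t\mid a_{1:t-1},\gamma_{1:t})=\sum_{x_t}P^{\psi}(x_t\mid a_{1:t-1},\gamma_{1:t})\,P^{\psi}(a_t\mid x_t,a_{1:t-1},\gamma_{1:t}).
\]
The last factor equals $\prod_{i\in\cN}\gamma_t^i(a_t^i\mid x_t^i)$ directly from the action-generation mechanism $A_t^i\sim\Gamma_t^i(\cdot\mid X_t^i)$ (the per-player coins being independent given $x_t,\gamma_t$). For the first factor I would argue that further conditioning on $\gamma_{1:t}$ is informationally vacuous for $X_t$: since each $\gamma_s$, $s\le t$, is generated from $a_{1:s-1}$ and exogenous randomization, $\gamma_{1:t}$ is conditionally independent of $X_t$ given $a_{1:t-1}$, so $P^{\psi}(x_t\mid a_{1:t-1},\gamma_{1:t})=P^{\psi}(x_t\mid a_{1:t-1})=\pi_t(x_t)$, the joint belief (which by Claim~\ref{claim:C1} factorizes as $\prod_{i\in\cN}\pi_t^i(x_t^i)$). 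Substituting yields
\[
P^{\psi}(a_t\mid a_{1:t-1},\gamma_{1:t})=\sum_{x_t}\pi_t(x_t)\prod_{i\in\cN}\gamma_t^i(a_t^i\mid x_t^i)\;=:\;\widehat{Q}(a_t\mid\pi_t,\gamma_t),
\]
a probability on $\cA$ that depends on the history only through $(\pi_t,\gamma_t)$ and not on $\psi$.

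Then I would conclude: since $\Pi_{t+1}=F(\Pi_t,\Gamma_t,A_t)$,
\[
P^{\psi}(\pi_{t+1}\mid a_{1:t-1},\gamma_{1:t})=\sum_{a_t\in\cA}I_{\{\pi_{t+1}\}}\!\big(F(\pi_t,\gamma_t,a_t)\big)\,\widehat{Q}(a_t\mid\pi_t,\gamma_t)\;=:\;Q(\pi_{t+1}\mid\pi_t,\gamma_t),
\]
a kernel depending on the history only through $(\pi_t,\gamma_t)$ and identical for every common-agent policy $\psi$. Since $\Pi_{1:t}$ is $\sigma(a_{1:t-1},\gamma_{1:t-1})$-measurable, smoothing gives $P^{\psi}(\pi_{t+1}\mid\pi_{1:t},\gamma_{1:t})=Q(\pi_{t+1}\mid\pi_t,\gamma_t)$, which is precisely the controlled-Markov property of $(\Pi_t,\Gamma_t)_{t\in\mathcal{T}}$ with state $\Pi_t$ and control $\Gamma_t$.

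I expect the only genuine subtlety to be the justification of $P^{\psi}(x_t\mid a_{1:t-1},\gamma_{1:t})=\pi_t(x_t)$, i.e.\ that folding in the realized $\gamma_{1:t}$ adds no information about $X_t$ beyond $a_{1:t-1}$; this rests on the common agent's coins being drawn independently of the type process and on $\psi_t$ reading only $a_{1:t-1}$, and it is the same structural observation underpinning the factorization in Claim~\ref{claim:C1} (and the conditional-independence Claim~\ref{claim:CondInd}). The degenerate case in which a denominator in the belief update vanishes, handled by the separate definition of $\bar F$ in Claim~\ref{claim:C1}, only alters the map $a_t\mapsto F(\pi_t,\gamma_t,a_t)$ and not the argument above.
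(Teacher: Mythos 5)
Your proposal is correct and follows essentially the same route as the paper: both decompose the transition over $(x_t,a_t)$, use the action-generation mechanism $\prod_i\gamma_t^i(a_t^i|x_t^i)$, invoke $P^{\psi}(x_t|\cdot)=\pi_t(x_t)$, and push the result through the deterministic update $F$. The only cosmetic difference is that you condition on the raw common-agent history $(a_{1:t-1},\gamma_{1:t})$ and then smooth down, whereas the paper conditions directly on $(\pi_{1:t},\gamma_{1:t})$; your explicit justification that $\gamma_{1:t}$ is informationally vacuous for $X_t$ given $a_{1:t-1}$ makes precise a step the paper leaves implicit.
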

	\begin{IEEEproof}
	\seq{
	\eq{
	\mP^{\psi}(d\pi_{t+1}|\pi_{1:t}, \gamma_{1:t}) &= \sum_{a_t,x_t} \mP^{\psi}(d\pi_{t+1},a_t,x_t|\pi_{1:t}, \gamma_{1:t}) \\
	&= \sum_{a_t,x_t} \mP^{\psi}(x_t| \pi_{1:t}, \gamma_{1:t}) \left\{\prod_{i=1}^N \gamma_t^i(a_t^i|x_t^i) \right\} I_{{F}(\pi_t, \gamma_t,a_t)}(\pi_{t+1} )\\
	&= \sum_{a_t,x_t} \pi_t(x_t)\left\{\prod_{i=1}^N \gamma_t^i(a_t^i|x_t^i) \right\} I_{{F}(\pi_t, \gamma_t,a_t)}(\pi_{t+1})\\
	&= \mP(d\pi_{t+1}|\pi_t, \gamma_t) \label{eq:piupdateeq}.
	}
	}
	\end{IEEEproof}

 For any given policy profile $\psi$, we construct policy profile $\theta$ in the following way.

	\eq{ \theta_t(d\gamma_t|\pi_t) &\defeq \mP^{\psi}(d\gamma_t|\pi_t) \label{eq:defMi}.
	}
%
	\begin{claim}
	\eq{
	\mP^{\psi}(d\pi_t ,d\gamma_t, d\pi_{t+1} ) &= \mP^{\theta} (d\pi_t, d\gamma_t ,d\pi_{t+1} ) \;\;\;\; \forall t \in \mathcal{T}. \label{eq:G0}
	}
	\label{claim:C3}
	\end{claim}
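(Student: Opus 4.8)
The plan is to prove (\ref{eq:G0}) by a forward induction on $t$, using that both the belief recursion $\bar F$ (Claim~\ref{claim:C1}) and the one-step transition kernel of the process $(\Pi_t,\Gamma_t)_t$ (Claim~\ref{claim:C2}) are \emph{independent of the coordinator's policy}, together with the defining property of $\theta$ in (\ref{eq:defMi}), namely $\theta_t(d\gamma_t|\pi_t)=P^{\psi}(d\gamma_t|\pi_t)$. A preliminary remark is that $\psi$ and $\theta$ need not induce the same law on the full trajectory $(\Pi_{1:T},\Gamma_{1:T})$, since $\psi_t$ may use $a_{1:t-1}$ through more than $\pi_t$; this is exactly why the induction will track only the marginal law of $\Pi_t$.

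First I would record a decomposition valid under \emph{any} coordinator policy. Since $\Pi_{t+1}=F(\Pi_t,\Gamma_t,A_t)$ and, by Claim~\ref{claim:C2}, the conditional law of $\Pi_{t+1}$ given $(\Pi_{1:t},\Gamma_{1:t})$ collapses to a kernel $P(d\pi_{t+1}|\pi_t,\gamma_t)$ that makes no reference to the policy, one obtains
\[ P^{\psi}(d\pi_t,d\gamma_t,d\pi_{t+1}) = P^{\psi}(d\pi_t,d\gamma_t)\,P(d\pi_{t+1}|\pi_t,\gamma_t), \]
and the identical identity with $\psi$ replaced everywhere by $\theta$. Consequently, (\ref{eq:G0}) follows as soon as we show $P^{\psi}(d\pi_t,d\gamma_t)=P^{\theta}(d\pi_t,d\gamma_t)$ for each $t$; and integrating these two displays over $(\pi_t,\gamma_t)$ then upgrades that equality to $P^{\psi}(d\pi_{t+1})=P^{\theta}(d\pi_{t+1})$, which is precisely the quantity the induction must propagate.

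For the base case $t=1$, $\Pi_1=(Q_1^i)_{i\in\cN}$ is the same deterministic point under $\psi$ and under $\theta$, so $P^{\psi}(d\pi_1)=P^{\theta}(d\pi_1)$. For the inductive step assume $P^{\psi}(d\pi_t)=P^{\theta}(d\pi_t)$. Under $\theta$ the coordinator draws $\Gamma_t$ conditionally only on $\Pi_t$, hence $P^{\theta}(d\gamma_t|\pi_t)=\theta_t(d\gamma_t|\pi_t)$; combining this with the definition (\ref{eq:defMi}) and the induction hypothesis gives
\[ P^{\theta}(d\pi_t,d\gamma_t) = P^{\theta}(d\pi_t)\,\theta_t(d\gamma_t|\pi_t) = P^{\psi}(d\pi_t)\,P^{\psi}(d\gamma_t|\pi_t) = P^{\psi}(d\pi_t,d\gamma_t). \]
Feeding this into the decomposition of the previous paragraph yields (\ref{eq:G0}) at time $t$, and marginalizing over $(\pi_t,\gamma_t)$ gives $P^{\psi}(d\pi_{t+1})=P^{\theta}(d\pi_{t+1})$, closing the induction.

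The step I expect to be the main obstacle — or at least the one needing the most care — is the reduction to the policy-free kernel, i.e. justifying that Claim~\ref{claim:C2} applies \emph{verbatim to the $\theta$-generated process}. Its proof only uses $P(x_t|\pi_{1:t},\gamma_{1:t})=\pi_t(x_t)$ and $\Pi_{t+1}=F(\Pi_t,\Gamma_t,A_t)$, neither of which involves the coordinator policy, so the kernel $P(d\pi_{t+1}|\pi_t,\gamma_t)$ in (\ref{eq:piupdateeq}) is genuinely the same object for $\psi$ and for $\theta$; but this should be stated explicitly rather than assumed. A minor technical wrinkle is the degenerate branch of the belief update in (\ref{eq:piupdate}) when a denominator vanishes: since $\bar F$ is defined there too and remains policy-independent, it is absorbed into the same argument with no change.
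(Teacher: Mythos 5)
Your proposal is correct and follows essentially the same route as the paper: induction on the marginal law of $\Pi_t$, using the policy-independent transition kernel from Claim~\ref{claim:C2} and the defining property $\theta_t(d\gamma_t|\pi_t)=P^{\psi}(d\gamma_t|\pi_t)$ to equate the joint laws, then marginalizing to propagate the hypothesis. Your explicit remarks---that only the marginal of $\Pi_t$ (not the full trajectory law) is tracked, and that the kernel of Claim~\ref{claim:C2} applies verbatim under $\theta$---are points the paper leaves implicit, but they do not change the argument.
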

	\begin{IEEEproof}
	We prove this by induction. For $t=1$,
	\eq{
	\mP^{\psi}(d\pi_{1} ) &= \mP^{\theta}(d\pi_{1}) = I_Q(\pi_1). \label{eq:G1}
	}
	Now suppose $P^{\psi}(d\pi_{t}) = P^{{\theta}}(d\pi_{t})$ is true for $t$, then
		\seq{
	\eq{
	\mP^{\psi}(d\pi_t ,d\gamma_t ,d\pi_{t+1} ) &= \mP^{\psi}(d\pi_{t})P^{\psi}(d\gamma_t|  \pi_t )\mP^{\psi}(d\pi_{t+1}| \pi_t  \gamma_t )\\
	&= \mP^{{\theta}}(d\pi_{t}){\theta}_t(d\gamma_t|  \pi_t )P(d\pi_{t+1}| \pi_t , \gamma_t )\label{eq:G2}\\
	&= \mP^{{\theta}}(d\pi_t , d\gamma_t, d\pi_{t+1}).
	}
	}
	where  \eqref{eq:G2} is true from induction hypothesis, definition of ${\theta}$ in \eqref{eq:defMi} and since $(\Pi_t, \Gamma_t )_{t \in \mathcal{T}}$ is a controlled Markov process as proved in Claim~\ref{claim:C2} and thus its update kernel does not depend on policy ${\psi}$. This completes the induction step.
	\end{IEEEproof}
	\begin{claim}
	For any policy ${\psi}$,
	\eq{
	\mP^{\psi}(x_t, a_t|a_{1:t-1}, \gamma_t) = \mP(x_t,a_t|\pi_t,\gamma_t ). \label{eq:G5}
	}
	\label{claim:C4}
	\end{claim}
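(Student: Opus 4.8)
The plan is to peel off the conditioning in two stages and use the defining structure of the common-agent representation: given $a_{1:t-1}$ the prescription $\gamma_t$ is drawn from $\psi_t(\cdot\mid a_{1:t-1})$ with randomness independent of the type process, and given $(x_t,\gamma_t)$ each action $a_t^i$ is drawn from $\gamma_t^i(\cdot\mid x_t^i)$ independently across players.

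First I would factor
\[
P^{\psi}(x_t,a_t\mid a_{1:t-1},\gamma_t)=P^{\psi}(x_t\mid a_{1:t-1},\gamma_t)\;P^{\psi}(a_t\mid x_t,a_{1:t-1},\gamma_t).
\]
For the action factor, the common-agent construction gives $A_t^i\sim\gamma_t^i(\cdot\mid x_t^i)$ conditionally independently across $i$ and independently of $a_{1:t-1}$ given $(x_t,\gamma_t)$, so $P^{\psi}(a_t\mid x_t,a_{1:t-1},\gamma_t)=\prod_{i=1}^N\gamma_t^i(a_t^i\mid x_t^i)$, which is a function of $(x_t,\gamma_t)$ alone. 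For the type factor I would observe that $\Gamma_t$ is a (randomized) function of $a_{1:t-1}$ through $\psi_t$, hence $X_t$ is conditionally independent of $\Gamma_t$ given $a_{1:t-1}$ and $P^{\psi}(x_t\mid a_{1:t-1},\gamma_t)=P^{\psi}(x_t\mid a_{1:t-1})$; then, invoking the across-players conditional independence already used in the proof of Claim~\ref{claim:C1} (valid because the $Q_t^i$ are independent kernels and $\gamma_t^i$ acts only on $x_t^i$), this equals $\prod_{i=1}^N P^{\psi^i}(x_t^i\mid a_{1:t-1})=\prod_{i=1}^N\pi_t^i(x_t^i)=\pi_t(x_t)$.

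Multiplying the two factors yields $P^{\psi}(x_t,a_t\mid a_{1:t-1},\gamma_t)=\pi_t(x_t)\prod_{i=1}^N\gamma_t^i(a_t^i\mid x_t^i)$, which depends on $a_{1:t-1}$ only through $\pi_t$; denoting this expression by $P(x_t,a_t\mid\pi_t,\gamma_t)$ establishes the claim. The only delicate point is the type factor: one must check carefully that conditioning on $\gamma_t$ contributes no information about $x_t$ beyond $a_{1:t-1}$, and that the conditional law of $x_t$ given $a_{1:t-1}$ factorizes into the marginals already named $\pi_t^i$. Both reduce to the same bookkeeping as in Claims~\ref{claim:CondInd} and~\ref{claim:C1} — the type dynamics split across players and the common agent's prescription is component-wise — so I anticipate no genuine difficulty beyond writing this out.
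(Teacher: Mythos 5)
Your proposal is correct and follows essentially the same route as the paper: factor the conditional law into the type part and the action part, note that the action part is $\prod_{i}\gamma_t^i(a_t^i\mid x_t^i)$, and observe that conditioning on $\gamma_t$ (a randomized function of $a_{1:t-1}$ through $\psi_t$) adds nothing beyond $a_{1:t-1}$, so the type part is $\pi_t(x_t)$ by definition. The extra factorization into marginals that you flag as the delicate point is really the content of Claim~\ref{claim:C1} and is not needed here beyond identifying $P^{\psi}(x_t\mid a_{1:t-1})$ with $\pi_t(x_t)$.
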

	\begin{IEEEproof}
	\seq{
	\eq{
	\mP^{\psi}(x_t, a_t|a_{1:t-1},\gamma_t )
	 &=  \mP^{{\psi}}(x_t|a_{1:t-1}, \gamma_{t}) \prod_{i \in \cN} \gamma^i_t(a_t^i|x_t^i)\\
&=  \pi_t(x_t) \prod_{i\in \cN}   \gamma^i_t(a_t^i|x_t^i)\\
&= \mP(x_t,a_t|\pi_t, \gamma_t) .
	}
	}
	\end{IEEEproof}
	Finally,
	\seq{
	\eq{\mP^{{\psi}}(x_t, a_t) &= \sum_{a_{1:t-1},\gamma_t}\mP^{\psi}(x_t,a_t|a_{1:t-1},\gamma_t) \mP^{\psi}(a_{1:t-1},\gamma_t)\\
	&= \sum_{a_{1:t-1}\gamma_t}\mP(x_t,a_t|\pi_t, \gamma_t) \mP^{\psi}(a_{1:t-1},\gamma_t)\label{eq:G6}\\
	&= \sum_{\pi_t, \gamma_t}\mP(x_t,a_t|\pi_t, \gamma_t) \mP^{\psi}(\pi_t, \gamma_t) \label{eq:G7}\\
	&= \sum_{\pi_t, \gamma_t}\mP(x_t,a_t|\pi_t, \gamma_t) \mP^{{\theta}}(\pi_t, \gamma_t) \label{eq:G8}\\
	&= \mP^{\theta}(x_t, a_t).
}
}
	where \eqref{eq:G6} follows from \eqref{eq:G5}, \eqref{eq:G7} is due to change of measure and \eqref{eq:G8} follows from \eqref{eq:G0}.
		
\end{IEEEproof}


\section{Proof of Theorem~\ref{Thm:Main}}
\label{app:B}

\begin{IEEEproof}
We prove \eqref{eq:prop} using induction and the results in Lemma~\ref{lemma:2}, \ref{lemma:3} and \ref{lemma:1} proved in Appendix~\ref{app:lemmas}.
\seq{
For base case at $t=T$, $\forall i\in \cN, (a_{1:T-1}, x_{1:T}^i)\in \mathcal{H}_{T}^i, \beta^i$
\eq{
\E^{\beta_{T}^{*,i} \beta_{T}^{*,-i},\, \mu_{T}^{*}[a_{1:T-1}] }\left\{  R_T^i(X_T,A_T) \big\lvert a_{1:T-1}, x_{1:T}^i \right\}
&=
V^i_T(\underline{\mu}_T^*[a_{1:T-1}], x_T^i)  \label{eq:T2a}\\
&\geq \E^{\beta_{T}^{i} \beta_{T}^{*,-i},\, \mu_{T}^{*}[a_{1:T-1}]} \left\{ R_T^i(X_T,A_T) \big\lvert a_{1:T-1}, x_{1:T}^i \right\},  \label{eq:T2}
}
}
where \eqref{eq:T2a} follows from Lemma~\ref{lemma:1} and \eqref{eq:T2} follows from Lemma~\ref{lemma:2} in Appendix~\ref{app:lemmas}.

Let the induction hypothesis be that for $t+1$, $\forall i\in \cN, a_{1:t} \in \mathcal{H}_{t+1}^c, x_{1:t+1}^i \in (\cX^i)^{t+1}, \beta^i$,
\seq{
\eq{
 \E^{\beta_{t+1:T}^{*,i} \beta_{t+1:T}^{*,-i},\, \mu_{t+1}^{*}[ a_{1:t}]} \left\{ \sum_{n=t+1}^T R_n^i(X_n,A_n) \big\lvert a_{1:t}, x_{1:t+1}^i \right\} \\
 \geq
  \E^{\beta_{t+1:T}^{i} \beta_{t+1:T}^{*,-i},\, \mu_{t+1}^{*}[ a_{1:t}]} \left\{ \sum_{n=t+1}^T R_n^i(X_n,A_n) \big\lvert  a_{1:t}, x_{1:t+1}^i \right\}. \label{eq:PropIndHyp}
}
}
\seq{
Then $\forall i\in \cN, (a_{1:t-1}, x_{1:t}^i) \in \mathcal{H}_{t}^i, \beta^i$, we have
\eq{
&\E^{\beta_{t:T}^{*,i} \beta_{t:T}^{*,-i},\, \mu_{t}^{*}[ a_{1:t-1}]} \left\{ \sum_{n=t}^T R_n^i(X_n,A_n) \big\lvert a_{1:t-1}, x_{1:t}^i \right\} \nonumber \\
&= V^i_t(\underline{\mu}^*_t[a_{1:t-1}], x_t^i)\label{eq:T1}\\
&\geq \E^{\beta_t^i \beta_t^{*,-i}, \,\mu_t^*[a_{1:t-1}]} \left\{ R_t^i(X_t,A_t) + V_{t+1}^i (\underline{\mu}^*_{t+1}[a_{1:t-1}A_t], X_{t+1}^i) \big\lvert a_{1:t-1}, x_{1:t}^i \right\}  \label{eq:T3}\\
&= \E^{\beta_t^i \beta_t^{*,-i}, \,\mu_t^*[a_{1:t-1}]} \left\{ R_t^i(X_t,A_t) + \E^{\beta_{t+1:T}^{*,i} \beta_{t+1:T}^{*,-i},\, \mu_{t+1}^{*}[ a_{1:t-1},A_t]} \left\{ \sum_{n=t+1}^T R_n^i(X_n,A_n) \big\lvert \rp\rp\nn\\
&\hs{2cm}\lp\lp a_{1:t-1},A_t, x_{1:t+1}^i \right\}  \big\vert a_{1:t-1}, x_{1:t}^i \right\}  \label{eq:T3b}\\
&\geq \E^{\beta_t^i \beta_t^{*,-i}, \,\mu_t^*[a_{1:t-1}]} \left\{ R_t^i(X_t,A_t) + \right.\nonumber \\
&\hspace{0cm} \left.\E^{\beta_{t+1:T}^{i} \beta_{t+1:T}^{*,-i} \mu_{t+1}^{*}[a_{1:t-1},A_t ]} \left\{ \sum_{n=t+1}^T R_n^i(X_n,A_n) \big\lvert a_{1:t-1},A_t, x_{1:t}^i,X_{t+1}^i\right\} \big\vert a_{1:t-1}, x_{1:t}^i \right\}  \label{eq:T4} \\
&= \E^{\beta_t^i \beta_t^{*,-i}, \,  \mu_t^*[a_{1:t-1}]} \big\{ R_t^i(X_t,A_t) +  
\nn \\
&\E^{\beta_{t:T}^{i} \beta_{t:T}^{*,-i} \mu_{t}^{*}[a_{1:t-1}]} 
\left\{ \sum_{n=t+1}^T R_n^i(X_n,A_n) \big\lvert a_{1:t-1},A_t, x_{1:t}^i,X_{t+1}^i\right\} \big\vert a_{1:t-1}, x_{1:t}^i \big\}  
\label{eq:T5}\\
&=\E^{\beta_{t:T}^{i} \beta_{t:T}^{*,-i}\, \mu_{t}^{*}[a_{1:t-1}]} \left\{ \sum_{n=t}^T R_n^i(X_n,A_n) \big\lvert a_{1:t-1},  x_{1:t}^i \right\}  \label{eq:T6},
}
}
where \eqref{eq:T1} follows from Lemma~\ref{lemma:1}, \eqref{eq:T3} follows from Lemma~\ref{lemma:2}, \eqref{eq:T3b} follows from Lemma~\ref{lemma:1}, \eqref{eq:T4} follows from induction hypothesis in \eqref{eq:PropIndHyp} and \eqref{eq:T5} follows from Lemma~\ref{lemma:3}. Moreover, construction of $\theta$ in \eqref{eq:m_FP}, and consequently definition of $\beta^*$ in \eqref{eq:beta*def} are pivotal for \eqref{eq:T5} to follow from \eqref{eq:T4}.

We note that $\mu^*$ satisfies the consistency condition of~\cite[p. 331]{FuTi91book} from the fact that (a) for all $t$ and for every common history $a_{1:t-1}$, all players use the same belief $\mu_t^*[a_{1:t-1}]$ on $x_t$ and (b) the belief $\mu_t^*$ can be factorized as $\mu_t^*[a_{1:t-1}] = \prod_{i=1}^N \mu_t^{*,{i}}[a_{1:t-1}] \; \forall a_{1:t-1} \in \mathcal{H}_t^c$ where $\mu_t^{*,{i}}$ is updated through Bayes' rule $F^i$ as in Lemma~\ref{claim:C1} in Appendix~\ref{app:A2}.
\end{IEEEproof}

\section{Intermediate lemmas used in proof of Theorem~\ref{Thm:Main}}
\label{app:lemmas}
\begin{lemma}
\label{lemma:2}
$\forall t\in \mathcal{T}, i\in \cN, (a_{1:t-1}, x_{1:t}^i)\in \mathcal{H}_t^i, \beta^i_t$
\begin{multline}
V_t^i(\underline{\mu}_t^*[a_{1:t-1}], x_t^i) \geq
\\ 
\E^{\beta_t^i \beta_t^{*,-i},\, \mu_t^*[a_{1:t-1}]} \left\{ R_t^i(X_t,A_t) + V_{t+1}^i (\uF(\underline{\mu}_t^*[a_{1:t-1}], \beta_t^*(\cdot|a_{1:t-1},\cdot), A_t), X_{t+1}^i) \big\lvert  a_{1:t-1}, x_{1:t}^i \right\}.\label{eq:lemma2}
\end{multline} 
\end{lemma}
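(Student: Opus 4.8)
The plan is to show that the right‑hand side of (\ref{eq:lemma2}) is exactly the objective being maximized inside the fixed point equation (\ref{eq:m_FP}), evaluated at the partial function induced by the one‑step deviation $\beta_t^i$; the inequality is then immediate from the fact that $\theta_t[\underline{\pi}_t]$ is a maximizer and that $V_t^i$ is defined in (\ref{eq:Vdef}) as the attained optimal value. Throughout I write $\pi_t := \mu_t^*[a_{1:t-1}]$ and $\underline{\pi}_t := \underline{\mu}_t^*[a_{1:t-1}]$, and I take as given that the fixed point $\tilde{\gamma}_t = \theta_t[\underline{\pi}_t]$ exists (otherwise $V_t^i$ is not defined at $\underline{\pi}_t$). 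First I would unpack the measure $\E^{\beta_t^i \beta_t^{*,-i},\, \mu_t^*[a_{1:t-1}]}\{\,\cdot\mid a_{1:t-1}, x_{1:t}^i\,\}$: by the forward construction (\ref{eq:mu*def0})--(\ref{eq:mu*def}) and Claim~\ref{claim:C1}, the common belief factorizes as $\mu_t^*[a_{1:t-1}] = \prod_{j\in\cN}\mu_t^{*,j}[a_{1:t-1}]$, so under the conditioning event $X_t^{-i}$ is distributed according to $\pi_t^{-i} = \prod_{j\neq i}\mu_t^{*,j}[a_{1:t-1}]$ and is independent of $x_t^i$ (and of the now‑redundant prefix $x_{1:t-1}^i$).

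Next I would match the remaining ingredients to (\ref{eq:m_FP}). By (\ref{eq:beta*def}), $\beta_t^{*,-i}(\cdot\mid a_{1:t-1},\cdot) = \tilde{\gamma}_t^{-i}(\cdot\mid\cdot)$ and $\beta_t^*(\cdot\mid a_{1:t-1},\cdot) = \tilde{\gamma}_t(\cdot\mid\cdot)$, so the belief‑update argument occurring in (\ref{eq:lemma2}) is precisely $F(\underline{\pi}_t,\tilde{\gamma}_t,A_t)$ — note that it uses the equilibrium $\tilde{\gamma}_t$, not the deviation. For player $i$, once we condition on $x_{1:t}^i$ the law of $A_t^i$ is the fixed probability vector $\beta_t^i(\cdot\mid a_{1:t-1},x_{1:t}^i)$, so define a partial function $\gamma_t^i$ by $\gamma_t^i(\cdot\mid x_t^i) := \beta_t^i(\cdot\mid a_{1:t-1},x_{1:t}^i)$ and let $\gamma_t^i(\cdot\mid\tilde{x}^i)$ be arbitrary for $\tilde{x}^i\neq x_t^i$ (those values never enter). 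Since $R^i(X_t,A_t)$, the kernel $Q_{t+1}^i(\cdot\mid x_t^i,A_t)$ generating $X_{t+1}^i$, and the update $F(\underline{\pi}_t,\tilde{\gamma}_t,A_t)$ all depend on the private history only through $x_t^i$ and $A_t$, the conditioning on $x_{1:t}^i$ reduces to conditioning on $x_t^i$, and the right‑hand side of (\ref{eq:lemma2}) becomes
\eq{
\E^{\gamma_t^i(\cdot|x_t^i)\,\tilde{\gamma}_t^{-i},\,\pi_t}\left\{ R^i(X_t,A_t) + V_{t+1}^i\big(F(\underline{\pi}_t,\tilde{\gamma}_t,A_t), X_{t+1}^i\big) \,\big\lvert\, x_t^i \right\},
}
which is exactly the bracketed expression maximized in (\ref{eq:m_FP}), with the maximization variable set to $\gamma_t^i(\cdot\mid x_t^i)$.

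Finally, since $\tilde{\gamma}_t^i(\cdot\mid x_t^i) = \theta_t^i[\underline{\pi}_t](\cdot\mid x_t^i)$ lies in the $\arg\max$ of (\ref{eq:m_FP}) for this $x_t^i$, and since $V_t^i(\underline{\pi}_t,x_t^i)$ is by (\ref{eq:Vdef}) the value of that same expression at $\tilde{\gamma}_t^i(\cdot\mid x_t^i)$, the value at $\tilde{\gamma}_t^i$ dominates the value at $\gamma_t^i$, which gives $V_t^i(\underline{\mu}_t^*[a_{1:t-1}],x_t^i)\ \geq\ (\text{right-hand side of } (\ref{eq:lemma2}))$. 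I expect the only genuine subtlety to be the reduction of the conditioning from the full private history $x_{1:t}^i$ to the current type $x_t^i$: this is where both the product structure of $\mu_t^*$ (so that the belief on $x_t^{-i}$ carries no dependence on $x_t^i$) and the observation that $\beta_t^i$ enters only through the law of $A_t^i$ are used, and it is also the step that distinguishes this construction from the incorrect alternatives (a) and (b). A minor technical point is the degenerate case in which the Bayes denominator in the belief update vanishes, but this is absorbed into the definition of $\bar{F}$ in Claim~\ref{claim:C1} and does not affect the argument.
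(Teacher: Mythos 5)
Your proposal is correct and is essentially the paper's own argument: the paper also restricts the deviation $\beta_t^i(\cdot\mid a_{1:t-1},x_{1:t}^i)$ to a partial function $\hat{\gamma}_t^i(\cdot\mid x_t^i)$ (arbitrary off the realized type), identifies the resulting expectation with the objective of (\ref{eq:m_FP}) under the measure $\pi_t^{-i}\,\gamma_t^i\,\tilde{\gamma}_t^{-i}\,Q_{t+1}^i$, and invokes the optimality of $\tilde\gamma_t^i(\cdot\mid x_t^i)$ via (\ref{eq:Vdef}). The only cosmetic difference is that the paper packages this as a proof by contradiction while you state it directly.
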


\begin{IEEEproof}
We prove this Lemma by contradiction.

 Suppose the claim is not true for $t$. This implies $\exists i, \hat{\beta}_t^i, \hat{a}_{1:t-1}, \hat{x}_{1:t}^i$ such that
\begin{multline}
\E^{\hat{\beta}_t^i \beta_t^{*,-i},\, \mu_t^*[\hat{a}_{1:t-1}] } \left\{ R_t^i(X_t,A_t) +  V_{t+1}^i (\uF(\underline{\mu}_t^*[\hat{a}_{1:t-1}], \beta_t^*(\cdot|\hat{a}_{1:t-1},\cdot), A_t), X_{t+1}^i) \big\lvert \hat{a}_{1:t-1},\hat{x}_{1:t}^i \right\} \\
> V_t^i(\underline{\mu}_t^*[\hat{a}_{1:t-1}], \hat{x}_{t}^i).\label{eq:E8}
\end{multline}
We will show that this leads to a contradiction.

Construct 
\begin{equation}
\hat{\gamma}^i_t(a_t^i|x_t^i) = \lb{\hat{\beta}_t^i(a_t^i|\hat{a}_{1:t-1},\hat{x}_{1:t}^i) \;\;\;\;\; x_t^i = \hat{x}_t^i \\ \text{arbitrary} \;\;\;\;\;\;\;\;\;\;\;\;\;\; \text{otherwise.}  }
\end{equation}

Then for $\hat{a}_{1:t-1}, \hat{x}_{1:t}^i$, we have
\seq{
\eq{
&V_t^i(\underline{\mu}_t^*[\hat{a}_{1:t-1}], \hat{x}_t^i) 
\nn \\
&= \max_{\gamma^i_t(\cdot|\hat{x}_t^i)} \E^{\gamma^i_t(\cdot|\hat{x}_t^i) \beta_t^{*,-i}, \, \mu_t^*[\hat{a}_{1:t-1}]} \left\{ R_t^i(\hat{x}_t^ix_t^{-i},a_t) + V_{t+1}^i (\uF(\underline{\mu}_t^*[\hat{a}_{1:t-1}], \beta_t^{*}(\cdot|\hat{a}_{1:t-1},\cdot), A_t), X_{t+1}^i) \big\lvert  \hat{x}_{t}^i \right\}, \label{eq:E11}\\
&\geq\E^{\hat{\gamma}_t^i(\cdot|\hat{x}_t^i) \beta_t^{*,-i},\,\mu_t^*[\hat{a}_{1:t-1}]} \left\{ R_t^i(X_t,A_t) + V_{t+1}^i (\uF(\underline{\mu}_t^*[\hat{a}_{1:t-1}], \beta_t^{*}(\cdot|\hat{a}_{1:t-1},\cdot), A_t), {X}_{t+1}^i) \big\lvert \hat{x}_{t}^i \right\}   
\\ \nn 
&=\sum_{x_t^{-i},a_t,x_{t+1}}   \left\{ R_t^i(\hat{x}_t^ix_t^{-i},a_t) + V_{t+1}^i (\uF(\underline{\mu}_t^*[\hat{a}_{1:t-1}], \beta_t^{*}(\cdot|\hat{a}_{1:t-1},\cdot), a_t), x_{t+1}^i)\right\}
\\
&\qquad \times \mu_t^{*,-i}[\hat{a}_{1:t-1}] (x_t^{-i}) \hat{\gamma}_t^i(a^i_t|\hat{x}_t^i) \beta_t^{*,-i}(a_t^{-i}|\hat{a}_{1:t-1}, x_t^{-i})Q_t^i(x_{t+1}^i|\hat{x}_t^i,a_t)  
\\ \nn 
&= \sum_{x_t^{-i},a_t,x_{t+1}}  \left\{ R_t^i(\hat{x}_t^ix_t^{-i},a_t) + V_{t+1}^i (\uF(\underline{\mu}_t^*[\hat{a}_{1:t-1}], \beta_t^{*}(\cdot|\hat{a}_{1:t-1},\cdot), a_t), x_{t+1}^i)\right\}
\\
& \qquad \times  \mu_t^{*,-i}[\hat{a}_{1:t-1}](x_t^{-i}) \hat{\beta}^i_t(a_t^i|\hat{a}_{1:t-1} ,\hat{x}_{1:t}^i) \beta_t^{*,-i}(a_t^{-i}|\hat{a}_{1:t-1}, x_t^{-i}) Q_t^i(x_{t+1}^i|\hat{x}_t^i,a_t) \label{eq:E9}\\
&= \E^{\hat{\beta}_t^i \beta_t^{*,-i}, \mu_t^*[\hat{a}_{1:t-1}]} \left\{ R_t^i(\hat{x}_t^ix_t^{-i},a_t)+ V_{t+1}^i (\uF(\underline{\mu}_t^*[\hat{a}_{1:t-1}], \beta_t^{*}(\cdot|\hat{a}_{1:t-1},\cdot), A_t), X_{t+1}^i) \big\lvert \hat{a}_{1:t-1},  \hat{x}_{1:t}^i \right\}  \\
&> V_t^i(\underline{\mu}_t^*[\hat{a}_{1:t-1}], \hat{x}_{t}^i), \label{eq:E10}
}
where \eqref{eq:E11} follows from definition of $V_t^i$ in \eqref{eq:Vdef}, \eqref{eq:E9} follows from definition of $\hat{\gamma}_t^i$ and \eqref{eq:E10} follows from \eqref{eq:E8}. However this leads to a contradiction.
}
\end{IEEEproof}

\begin{lemma}
\label{lemma:3}
$\forall i\in \cN, t\in \mathcal{T}, (a_{1:t}, x_{1:t+1}^i)\in \mathcal{H}_{t+1}^i$ and
$\beta^i_{t}$
\seq{ 
\eq{
\E^{ \beta_{t:T}^{i}  \beta^{*,-i}_{t:T},\,\mu_{t}^{*}[a_{1:t-1}]}  \left\{ \sum_{n=t+1}^T R_n^i(X_n,A_n) \big\lvert  a_{1:t}, x_{1:t+1}^i \right\} \\ 
=
\E^{\beta^i_{t+1:T} \beta^{*,-i}_{t+1:T},\, \mu_{t+1}^{*}[a_{1:t}]}  \left\{ \sum_{n=t+1}^T R_n^i(X_n,A_n) \big\lvert a_{1:t}, x_{1:t+1}^i \right\}. \label{eq:F1}
}
}
Thus the above quantities do not depend on $\beta_t^i$.
\end{lemma}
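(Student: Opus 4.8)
The plan is to show that the conditional expectation of the reward-to-go from time $t+1$ onward, computed under the strategy profile $\beta^i_{t:T}\beta^{*,-i}_{t:T}$ and the belief $\mu_t^*[a_{1:t-1}]$, is unaffected by the choice of $\beta_t^i$, so that it equals the same quantity computed under $\beta^i_{t+1:T}\beta^{*,-i}_{t+1:T}$ and the forward-updated belief $\mu_{t+1}^*[a_{1:t}]$. The key observation, which I would state and prove as an auxiliary claim, is that for any $n\ge t+1$ the joint law of $(X_{t+1:n},A_{t+1:n})$ conditioned on $(a_{1:t},x_{1:t+1}^i)$ factorizes into a part involving only player $i$'s future strategy $\beta^i_{t+1:T}$ and player $i$'s type transitions $Q^j_{\cdot}$, and a part involving only the $-i$ players' strategies $\beta^{*,-i}_{t+1:T}$ — and crucially none of these ingredients depends on $\beta_t^i$ given $a_t$. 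In particular, the belief other players hold and use at times $t+1,\dots,T$ is $\mu^*_{t+1}[a_{1:t}],\dots$, which by the forward recursion (\ref{eq:mu*def}) is a deterministic function of $a_{1:t}$ and $\mu^*_t[a_{1:t-1}]$ only, and is therefore the same whether or not player $i$ deviated at time $t$.

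First I would condition on $a_t$ and write the left-hand side by the tower property as an expectation over $A_t$ (distributed according to $\beta_t^i(\cdot|a_{1:t-1},x_{1:t}^i)\beta_t^{*,-i}(\cdot|a_{1:t-1},x_t^{-i})$ and the belief on $x_t^{-i}$) of the inner reward-to-go conditioned on $(a_{1:t},x_{1:t+1}^i)$. But the statement of the lemma already conditions on a fixed $a_{1:t}$; so the real content is that, given $(a_{1:t},x_{1:t+1}^i)$, the conditional measure on all future variables does not remember how $a_t^i$ was generated. Here I would invoke the independence-of-types structure: by Claim~\ref{claim:CondInd} (and its analogue under the relevant profile), conditioned on the common history $a_{1:t}$ the private type histories $x_{1:t+1}^j$, $j\in\cN$, are mutually independent, and the $-i$ players' posterior on $x_{t+1}^{-i}$ given $a_{1:t}$ is precisely $\mu^{*,-i}_{t+1}[a_{1:t}]$ regardless of $\beta_t^i$. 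I would then expand the reward-to-go as a sum over $n$ and over trajectories, and show term-by-term that each summand, written out using $\beta^{*,-i}_{t+1:T}$, $\beta^i_{t+1:T}$, the kernels $Q$, and the belief $\mu^{*,-i}_{t+1}[a_{1:t}]$ for predicting the $-i$ actions, contains no factor depending on $\beta_t^i$ once we condition on $(a_{1:t},x_{1:t+1}^i)$. This yields exactly the right-hand side.

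The second ingredient I need is that player $i$'s \emph{own} future strategy $\beta^i_{t+1:T}$ enters both sides identically; this is immediate since on the left-hand side $\beta^i_{t:T}$ restricted to times $\ge t+1$ is the same object as $\beta^i_{t+1:T}$. So after removing the $\beta_t^i$-dependence from the transition/observation kernels and from the $-i$ players' behavior, the two expectations are literally over the same integrand against the same measure, establishing (\ref{eq:F1}); and since the right-hand side contains no $\beta_t^i$ at all, the final clause follows.

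I expect the main obstacle to be handling the belief-update machinery carefully: one must verify that the belief $\mu^{*,-i}_{t+1}[a_{1:t}]$ the $-i$ players use to choose their actions at time $t+1$ (and the chain of beliefs they use thereafter) is indeed a function of $a_{1:t}$ alone and coincides with the true posterior $P^{\,\cdot\,}(x_{t+1}^{-i}\mid a_{1:t})$ under the actual (possibly deviated) profile — this is where independence of types is essential, since it guarantees $\mu^{*,j}_{t+1}[a_{1:t}]$ depends only on player $j$'s own past strategy (here $\beta^{*,j}$) and on $a_{1:t}$, not on $\beta_t^i$. The bookkeeping of writing the trajectory probability as a product over players and over time, and checking that the $j=i$ factor is precisely the conditional law given $(a_{1:t},x_{1:t+1}^i)$ while the $j\ne i$ factors are governed by $\beta^{*,-i}$ and $\mu^{*,-i}$, is the technically delicate part but is routine once the factorization is set up.
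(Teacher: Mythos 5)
Your proposal is correct and follows essentially the same route as the paper: you compute the conditional law of the future variables given $(a_{1:t},x_{1:t+1}^i)$, use the conditional independence of types to factorize it, observe that the factor $\beta_t^i(a_t^i|\cdot)$ (and $Q^i(x_{t+1}^i|x_t^i,a_t)$) cancels between numerator and denominator of the Bayes ratio, and identify the resulting marginal on $x_{t+1}^{-i}$ with the forward-recursion belief $\mu_{t+1}^{*,-i}[a_{1:t}]$, which depends only on $\mu_t^{*,-i}[a_{1:t-1}]$, $\beta_t^{*,-i}$ and $a_t$. This is precisely the argument in the paper's Appendix proof of Lemma~\ref{lemma:3}.
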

\begin{IEEEproof} 
Essentially this claim stands on the fact that $\mu_{t+1}^{*,-i}[a_{1:t}]$ can be updated from $\mu_t^{*,-i}[a_{1:t-1}], \beta_t^{*,-i}$ and $a_t$, as $\mu_{t+1}^{*,-i}[a_{1:t}] = \prod_{j\neq i} F(\mu_t^{*,j}[a_{1:t-1}], \beta_t^{*,j}, a_t)$ as in Claim~\ref{claim:C1}.
Since the above expectations involve random variables $X_{t+1}^{-i}, A_{t+1:T}, X_{t+2:T}$, we consider the probability 
%
\begin{equation} 
\mP^{\beta^i_{t:T} \beta^{*,-i}_{t:T},\, \mu_{t}^{*}[a_{1:t-1}]} (x_{t+1}^{-i}, a_{t+1:T} ,x_{t+2:T}\lvert  a_{1:t}, x_{1:t+1}^i ) = \frac{Nr}{Dr} \label{eq:F2}
\end{equation}
where
\seq{
\eq{
&Nr =\sum_{x_t^{-i}}\mP^{\beta^i_{t:T} \beta^{*,-i}_{t:T},\, \mu_{t}^{*}[a_{1:t-1}]} (x_t^{-i} ,a_t, x_{t+1}, a_{t+1:T} ,x_{t+2:T} \vert a_{1:t-1}, x_{1:t}^i ) \\
&= \sum_{x_t^{-i}}\mP^{\beta^i_{t:T} \beta^{*,-i}_{t:T},\, \mu_{t}^{*}[a_{1:t-1}]} (x_t^{-i} \big\lvert  a_{1:t-1}, x_{1:t}^i ) 
\nonumber 
\\
&\beta_t^{i}(a_t^{i}|a_{1:t-1}, x_{1:t}^{i})\beta_t^{*,-i}(a_t^{-i}|a_{1:t-1}, x_t^{-i}) Q(x_{t+1}|x_t, a_t)\mP^{\beta^i_{t:T} \beta^{*,-i}_{t:T},\, \mu_{t}^{*}[a_{1:t-1}]} (a_{t+1:T}, x_{t+2:T}| a_{1:t} ,x_{1:t-1}^i, x_{t:t+1}) 
\\
=&\sum_{x_t^{-i}}\mu_t^{*,-i}[a_{1:t-1}](x_t^{-i})\beta_t^{i}(a_t^{i}|a_{1:t-1}, x_{1:t}^{i}) 
\nonumber 
\\  
& \beta_t^{*,-i}(a_t^{-i}|a_{1:t-1}, x_t^{-i}) Q^i(x^i_{t+1}|x^i_t, a_t)Q^{-i}(x^{-i}_{t+1}|x^{-i}_t, a_t)\mP^{\beta^i_{t+1:T} \beta^{*,-i}_{t+1:T},\, \mu_{t+1}^{*}[a_{1:t}]} (a_{t+1:T}, x_{t+2:T}| a_{1:t} ,x_{1:t}^i, x_{t+1}),\label{eq:Nr2}
}
where \eqref{eq:Nr2} follows from the conditional independence of types given common information, as shown in Claim~\ref{claim:CondInd}, 
and the fact that probability on $(a_{t+1:T} ,x_{2+t:T})$ given $a_{1:t}, x_{1:t-1}^i,x_{t:t+1}, \mu_{t}^{*}[a_{1:t-1}] $ depends on $a_{1:t}, x_{1:t}^i,x_{t+1}, \mu_{t+1}^{*}[a_{1:t}] $ through ${\beta_{t+1:T}^{ i} \beta_{t+1:T}^{*,-i} }$. Similarly, the denominator in \eqref{eq:F2} is given by
\eq{
Dr &= \sum_{\tilde{x}_{t}^{-i}} \mP^{\beta^i_{t:T} \beta^{*,-i}_{t:T},\, \mu_{t}^{*}[a_{1:t-1}]} (\tilde{x}_t^{-i}, a_t, x_{t+1}^i\big\lvert  a_{1:t-1}, x_{1:t}^i )\nn\\
&\sum_{\tilde{x}_{t}^{-i}} \mP^{\beta^i_{t:T} \beta^{*,-i}_{t:T} ,\,\mu_{t}^{*}} (\tilde{x}_t^{-i} | a_{1:t-1}, x_{1:t}^i )  \beta_t^{i}(a_t^{i}|a_{1:t-1}, x_{1:t}^{i}) \beta_t^{*,-i}(a_t^{-i}|a_{1:t-1}, \tilde{x}_t^{-i})Q^i(x^i_{t+1}|x^i_t, a_t)\label{eq:F3}\\
=&\sum_{\tilde{x}_{t}^{-i}} \mu_t^{*,-i}[a_{1:t-1}](\tilde{x}_t^{-i}) \beta_t^{i}(a_t^{i}|a_{1:t-1}, x_{1:t}^{i}) \beta_t^{*,-i}(a_t^{-i}|a_{1:t-1}, \tilde{x}_t^{-i})Q^i(x^i_{t+1}|x^i_t, a_t).  \label{eq:F4}
%
}

By canceling the terms $\beta_t^i(\cdot)$ and $Q^i(\cdot)$ in the numerator and the denominator, \eqref{eq:F2} is given by
\eq{
&\frac{\sum_{x_t^{-i}}\mu_t^{*,-i}[a_{1:t-1}](x_t^{-i}) \beta_t^{*,-i}(a_t^{-i}|a_{1:t-1}, x_t^{-i}) Q_{t+1}^{-i}(x^{-i}_{t+1}|x^{-i}_t, a_t)}{\sum_{\tilde{x}_{t}^{-i}} \mu_t^{*,-i}[a_{1:t-1}](\tilde{x}_t^{-i}) \beta_t^{*,-i}(a_t^{-i}|a_{1:t-1}, \tilde{x}_t^{-i})}  
\nonumber \\
&\times\mP^{\beta^i_{t+1:T} \beta^{*,-i}_{t+1:T},\, \mu_{t+1}^{*}[a_{1:t}]} (a_{t+1:T}, x_{t+2:T}| a_{1:t} ,x_{1:t}^i, x_{t+1})
\\
&=\mu_{t+1}^{*,-i}[a_{1:t}](x_{t+1}^{-i})
\mP^{\beta^i_{t+1:T} \beta^{*,-i}_{t+1:T},\, \mu_{t+1}^{*}[a_{1:t}]} (a_{t+1:T}, x_{t+2:T}| a_{1:t} ,x_{1:t}^i, x_{t+1})
\label{eq:F6}
\\
&= \mP^{\beta_{t+1:T}^{ i} \beta_{t+1:T}^{*, -i},\, \mu_{t+1}^{*}[a_{1:t}]}  (x_{t+1}^{-i} ,a_{t+1:T},x_{t+2:T} | a_{1:t}, x_{1:t+1}^i ),
}
}
where \eqref{eq:F6} follows from using the definition of $\mu_{t+1}^{*,-i}[a_{1:t}](x_t^{-i})$ in the forward recursive step in \eqref{eq:mu*def} and the definition of the belief update in \eqref{eq:piupdate}.
\end{IEEEproof}

\begin{lemma}
\label{lemma:1}
$\forall i\in \cN, t\in \mathcal{T}, (a_{1:t-1}, x_{1:t}^i)\in \mathcal{H}_t^i$,
\begin{gather}
V^i_t(\underline{\mu}^*_t[a_{1:t-1}], x_t^i) =
\E^{\beta_{t:T}^{*,i} \beta_{t:T}^{*,-i},\mu_{t}^{*}[a_{1:t-1}]} \left\{ \sum_{n=t}^T R_n^i(X_n,A_n) \big\lvert  a_{1:t-1}, x_{1:t}^i \right\} .
\end{gather} 
\end{lemma}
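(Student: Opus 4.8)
The plan is to prove Lemma~\ref{lemma:1} by backward induction on $t$, running from $t=T+1$ down to $t=1$, peeling off one stage of reward at a time and matching the resulting expression to the backward-recursion definition of $V_t^i$ in (\ref{eq:Vdef}). The base case $t=T+1$ is immediate: the left side is $0$ by the initialization (\ref{eq:VT+1}) and the right side is an empty sum, so the step from $t+1$ to $t$ (for $t\le T$) is all that needs work.

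For the inductive step, assume the identity holds at $t+1$ for every $a_{1:t}\in\mathcal{H}_{t+1}^c$ and every $x_{1:t+1}^i$. Starting from the reward-to-go at time $t$ under $\beta^*$, I would apply the tower property of conditional expectation, conditioning on $(a_{1:t-1},A_t,x_{1:t+1}^i)$, and use that under the forward construction $\underline{\mu}^*_{t+1}[a_{1:t}]$ is a deterministic function of $a_{1:t}$ via (\ref{eq:mu*def}) while the tail strategy $\beta^*_{t+1:T}$ is left unchanged. This gives
\eq{
\E^{\beta_{t:T}^{*,i} \beta_{t:T}^{*,-i},\,\mu_t^*[a_{1:t-1}]}\left\{ R^i(X_t,A_t) + \E^{\beta_{t+1:T}^{*,i} \beta_{t+1:T}^{*,-i},\,\mu_{t+1}^*[a_{1:t-1}A_t]}\left\{\sum_{n=t+1}^T R^i(X_n,A_n)\big\lvert a_{1:t-1},A_t, x_{1:t+1}^i\right\} \big\lvert a_{1:t-1}, x_{1:t}^i\right\},
}
and the induction hypothesis replaces the inner expectation by $V_{t+1}^i(\underline{\mu}^*_{t+1}[a_{1:t-1}A_t], X_{t+1}^i)$.

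It then remains to show this one-stage expression equals $V_t^i(\underline{\mu}^*_t[a_{1:t-1}], x_t^i)$ as defined in (\ref{eq:Vdef}). Since $V_{t+1}^i$ depends on the future only through $X_{t+1}^i$, this reduces to checking that the joint law of $(X_t^{-i},A_t,X_{t+1}^i)$ under $\beta^*$ conditioned on $(a_{1:t-1},x_{1:t}^i)$ is exactly the product measure $\mu_t^{*,-i}[a_{1:t-1}](x_t^{-i})\,\tilde{\gamma}_t^i(a_t^i|x_t^i)\,\tilde{\gamma}_t^{-i}(a_t^{-i}|x_t^{-i})\,Q_{t+1}^i(x_{t+1}^i|x_t^i,a_t)$ appearing in (\ref{eq:m_FP})--(\ref{eq:Vdef}), where $\tilde{\gamma}_t=\theta_t[\underline{\mu}^*_t[a_{1:t-1}]]=\beta_t^*(\cdot|a_{1:t-1},\cdot)$. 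The action factor follows from (\ref{eq:beta*def}) and the product form of $\beta^{*,-i}$; the transition factor is the type kernel $Q_{t+1}^i$; and the belief argument matches because $\underline{\mu}^*_{t+1}[a_{1:t-1}A_t]=F(\underline{\mu}^*_t[a_{1:t-1}],\beta_t^*(\cdot|a_{1:t-1},\cdot),A_t)$ by (\ref{eq:mu*def}) and the definition of $F$ in Claim~\ref{claim:C1} (modulo the standard handling of zero-probability histories in the update).

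The main obstacle is the first factor: establishing $P^{\beta^*}(x_t^{-i}\mid a_{1:t-1},x_{1:t}^i)=\mu_t^{*,-i}[a_{1:t-1}](x_t^{-i})$, i.e. that the forward-constructed common belief is the true Bayesian posterior on $x_t^{-i}$ and, crucially, does not depend on player $i$'s private history. Independence of types is what makes this work: by Claim~\ref{claim:CondInd} the strings $(x_{1:t}^j)_{j\in\cN}$ are conditionally independent given $a_{1:t-1}$ under any profile, so further conditioning on $x_{1:t}^i$ leaves the law of $x_t^{-i}$ unchanged; and an auxiliary downward/forward induction (analogous to the claim invoked in the proof of Theorem~\ref{Thm:Main}) shows that each marginal $P^{\beta^*}(x_t^j\mid a_{1:t-1})$ coincides with $\mu_t^{*,j}[a_{1:t-1}]$, since both start from $Q_1^j$ and obey the same update $\bar{F}$ driven only by $\beta_t^{*,j}(\cdot|a_{1:t-1},\cdot)$ and $a_t^j$. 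Granting this, the two expectation expressions agree termwise and the induction closes.
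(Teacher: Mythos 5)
Your proof is correct and takes essentially the same route as the paper's: backward induction, the tower property, and the induction hypothesis, with the crucial measure-change step (rewriting the inner conditional expectation under the time-$t$ assessment as one under $\mu_{t+1}^{*}[a_{1:t}]$ with the tail strategies $\beta^*_{t+1:T}$) resting on exactly the belief-consistency argument the paper isolates as Lemma~\ref{lemma:3}. Your inline justification of that consistency---conditional independence of types from Claim~\ref{claim:CondInd} plus the observation that $\mu^{*,j}_t$ and the true posterior obey the same $\bar{F}$ recursion from the same initialization---is precisely the content of the paper's Lemma~\ref{lemma:3} and Claim~\ref{claim:C1}.
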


\begin{IEEEproof}
%
\seq{
We prove the lemma by induction. For $t=T$,
\eq{
 &\E^{\beta_{T}^{*,i} \beta_{T}^{*,-i} ,\,\mu_{T}^{*}[a_{1:T-1}]} \left\{  R_T^i(X_T,A_T) \big\lvert a_{1:T-1},  x_{1:T}^i \right\}
 \nonumber \\ 
 &= \sum_{x_T^{-i} a_T} R_T^i(x_T,a_T)\mu_{T}^{*}[a_{1:T-1}](x_T^{-i}) \beta_{T}^{*,i}(a_T^i|a_{1:T-1},x_{T}^i) 
 \beta_{T}^{*,-i}(a_T^{-i}|a_{1:T-1}, x_{T}^{-i})\\
 &= V^i_T(\underline{\mu}^*_T[a_{1:T-1}], x_T^i) \label{eq:C1},
}
}
where \eqref{eq:C1} follows from the definition of $V_t^i$ in \eqref{eq:Vdef} and the definition of $\beta_T^*$ in the forward recursion in \eqref{eq:beta*def}.

Suppose the claim is true for $t+1$, i.e., $\forall i\in \cN, t\in \mathcal{T}, (a_{1:t}, x_{1:t+1}^i)\in \mathcal{H}_{t+1}^i$
\begin{gather}
V^i_{t+1}(\underline{\mu}^*_{t+1}[a_{1:t}], x_{t+1}^i) = \E^{\beta_{t+1:T}^{*,i} \beta_{t+1:T}^{*,-i},\, \mu_{t+1}^{*}[a_{1:t}]} 
\left\{ \sum_{n=t+1}^T R_n^i(X_n,A_n) \big\lvert a_{1:t}, x_{1:t+1}^i \right\} 
\label{eq:CIndHyp}.
\end{gather}
Then $\forall i\in \cN, t\in \mathcal{T}, (a_{1:t-1}, x_{1:t}^i)\in \mathcal{H}_t^i$, we have
\seq{
\eq{
&\E^{\beta_{t:T}^{*,i} \beta_{t:T}^{*,-i} ,\,\mu_{t}^{*}[a_{1:t-1}]} \left\{ \sum_{n=t}^T R_n^i(X_n,A_n) \big\lvert  a_{1:t-1}, x_{1:t}^i \right\} 
\nonumber 
\\
&=  \E^{\beta_{t:T}^{*,i} \beta_{t:T}^{*,-i} ,\,\mu_{t}^{*}[a_{1:t-1}]} \left\{R_t^i(X_t,A_t) +\E^{\beta_{t:T}^{*,i} \beta_{t:T}^{*,-i} ,\,\mu_{t}^{*}[a_{1:t-1}]}  \right.
\nonumber \\ 
&\left. \left\{ \sum_{n=t+1}^T R_n^i(X_n,A_n)\big\lvert a_{1:t-1},  A_t, x_{1:t}^i,X_{t+1}^i\right\} \big\lvert a_{1:t-1},  x_{1:t}^i \right\} \label{eq:C2}
\\
&=  \E^{\beta_{t:T}^{*,i} \beta_{t:T}^{*,-i} ,\,\mu_{t}^{*}[a_{1:t-1}]} \left\{R_t^i(X_t,A_t) +\right.
\nonumber 
\\
&\hspace{0cm}\left.  \left\{ \sum_{n=t+1}^T R_n^i(X_n,A_n)\big\lvert a_{1:t-1},A_t, x_{1:t}^i,X_{t+1}^i\right\} \big\lvert a_{1:t-1}, x_{1:t}^i \right\} \label{eq:C3}
\\
&=  \E^{\beta_{t:T}^{*,i} \beta_{t:T}^{*,-i} ,\,\mu_{t}^{*}[a_{1:t-1}]} \left\{R_t^i(X_t,A_t) +  V^i_{t+1}(\underline{\mu}^*_{t+1}[a_{1:t-1}A_t], X_{t+1}^i) \big\lvert  a_{1:t-1}, x_{1:t}^i \right\} 
\label{eq:C4}
\\
&=  \E^{\beta_{t}^{*,i} \beta_{t}^{*,-i} ,\,\mu_{t}^{*}[a_{1:t-1}]} \left\{R_t^i(X_t,A_t) +  V^i_{t+1}(\underline{\mu}^*_{t+1}[a_{1:t-1}A_t], X_{t+1}^i) \big\lvert  a_{1:t-1}, x_{1:t}^i \right\} 
\label{eq:C5}
\\
&=V^i_{t}(\underline{\mu}^*_{t}[a_{1:t-1}], x_t^i) \label{eq:C6},
}
}
where \eqref{eq:C3} follows from Lemma~\ref{lemma:3} in Appendix~\ref{app:lemmas}, \eqref{eq:C4} follows from the induction hypothesis in \eqref{eq:CIndHyp}, \eqref{eq:C5} follows because the random variables involved in expectation, $X_t^{-i},A_t,X_{t+1}^i$ do not depend on $\beta_{t+1:T}^{*,i} \beta_{t+1:T}^{*,-i}$ and \eqref{eq:C6} follows from the definition of $\beta_t^*$ in the forward recursion in~\eqref{eq:beta*def}, the definition of $\mu_{t+1}^*$ in \eqref{eq:mu*def} and the definition of $V_t^i$ in \eqref{eq:Vdef}.
\end{IEEEproof}

\section{Proof of Theorem~\ref{thm:2}} \label{app:id}

\begin{IEEEproof}
We prove this by contradiction. Suppose for any equilibrium generating function $\phi$ that generates $(\beta^*,\mu^*)$ through forward recursion, there exists $t\in\cT, i\in\cN, a_{1:t-1}\in\cH_t^c,$ such that for $ \underline{\pi}_t =\underline{\mu}^*_t[a_{1:t-1}] $, \eqref{eq:m_FP} is not satisfied for $\phi$
i.e. for $\tgamma_t^i = \phi^i[\underline{\pi}_t] = \beta_t^{*,i}(\cdot|\underline{\mu}^*_t[a_{1:t-1}],x_t^i)$,
\eq{
 \tilde{\gamma}^{i}_t \not\in \arg\max_{\gamma^i_t} \E^{\gamma^i_t(\cdot|x^i) \tilde{\gamma}^{-i}_t, \, \pi_t} \left\{ R_t^i(X_t,A_t) 
+ V_{t+1}^i (\underline{F}(\underline{\pi}_t,\tilde{\gamma}_t, A_t), X_{t+1}^i) \big\lvert  x_t^i \right\}. \label{eq:FP4}
  }
  Let $t$ be the first instance in the backward recursion when this happens. This implies $\exists\ \hat{\gamma}_t^i$ such that
  \eq{
  \E^{\hat{\gamma}^i_t(\cdot|x^i) \tilde{\gamma}^{-i}_t, \, \pi_t} \left\{ R_t^i(X_t,A_t)+ V_{t+1}^i (\underline{F}(\underline{\pi}_t, \tilde{\gamma}_t, A_t), X_{t+1}^i) \big\lvert  x_t^i \right\}
  \nn\\
  > \E^{\tgamma^i_t(\cdot|x^i) \tilde{\gamma}^{-i}_t, \, \pi_t} \left\{ R_t^i(X_t,A_t) + V_{t+1}^i (\underline{F}(\underline{\pi}_t, \tilde{\gamma}_t, A_t), X_{t+1}^i) \big\lvert  x_t^i \right\} \label{eq:E1}
  }
  This implies for $\hat{\beta}_t(\cdot|\underline{\mu}^*_t[a_{1:t-1}],\cdot) = \hat{\gamma}_t^i$,
  \seq{
  \eq{
  &\E^{\beta_{t:T}^{*,i} \beta_{t:T}^{*,-i},\,\mu_{t}^{*}[a_{1:t-1}]} \left\{ \sum_{n=t}^T R_n^i(X_n,A_n) \big\lvert  a_{1:t-1}, x_{1:t}^i \right\}
  \nn\\
  &= \E^{\beta_t^{*,i} \beta_t^{*,-i}, \,\mu_t^*[a_{1:t-1}]} \left\{ R_t^i(X_t,A_t) + \E^{\beta_{t:T}^{*,i} \beta_{t:T}^{*,-i},\, \mu_{t}^{*}[ a_{1:t-1}]} \right. \nonumber \\
&\left.  \left\{ \sum_{n=t+1}^T R_n^i(X_n,A_n) \big\lvert a_{1:t-1},A_t, x_{1:t+1}^i \right\}  \big\vert a_{1:t-1}, x_{1:t}^i \right\}
\\
  &= \E^{\beta_t^{*,i} \beta_t^{*,-i}, \,\mu_t^*[a_{1:t-1}]} \left\{ R_t^i(X_t,A_t) + \E^{\beta_{t+1:T}^{*,i} \beta_{t+1:T}^{*,-i},\, \mu_{t+1}^{*}[ a_{1:t-1},A_t]} \right. \nonumber \\
&\left.  \left\{ \sum_{n=t+1}^T R_n^i(X_n,A_n) \big\lvert a_{1:t-1},A_t, x_{1:t+1}^i \right\}  \big\vert a_{1:t-1}, x_{1:t}^i \right\} \label{eq:E2}
  \\
  &=\E^{\tgamma^i_t(\cdot|x_t^i) \tilde{\gamma}^{-i}_t, \, \pi_t} \left\{ R_t^i(X_t,A_t) + V_{t+1}^i (\underline{F}(\underline{\pi}_t, \tilde{\gamma}_t, A_t), X_{t+1}^i) \big\lvert  x_t^i \right\} \label{eq:E3}
  \\
  &< \E^{\hat{\beta}^i_t(\cdot|\underline{\mu}^*_t[a_{1:t-1}],x_t^i) \tilde{\gamma}^{-i}_t, \, \pi_t} \left\{ R_t^i(X_t,A_t) + V_{t+1}^i (\underline{F}(\underline{\pi}_t, \tilde{\gamma}_t, A_t), X_{t+1}^i) \big\lvert  x_t^i \right\}\label{eq:E4}
  \\
  &= \E^{\hat{\beta}_t^i \beta_t^{*,-i}, \,  \mu_t^*[a_{1:t-1}]} \left\{ R_t^i(X_t,A_t) +  \E^{\beta_{t+1:T}^{*,i} \beta_{t+1:T}^{*,-i} \mu_{t+1}^{*}[a_{1:t-1},A_t]}\right. \nn \\
&\qquad \left. \left\{ \sum_{n=t+1}^T R_n^i(X_n,A_n) \big\lvert a_{1:t-1},A_t, x_{1:t}^i,X_{t+1}^i\right\} \big\vert a_{1:t-1}, x_{1:t}^i \big\} \right.\label{eq:E5}
  \\
  &=\E^{\hat{\beta}_t^i,\beta_{t+1:T}^{*,i} \beta_{t:T}^{*,-i},\,\mu_{t}^{*}[a_{1:t-1}]} \left\{ \sum_{n=t}^T R_n^i(X_n,A_n) \big\lvert  a_{1:t-1}, x_{1:t}^i \right\},\label{eq:E6}
  }
}
  where \eqref{eq:E2} follows from Lemma~\ref{lemma:3}, \eqref{eq:E3} follows from the definitions of $\tgamma_t^i$ and $\mu^*_{t+1}[a_{1:t-1},A_t]$ and Lemma~\ref{lemma:1}, \eqref{eq:E4} follows from \eqref{eq:E1} and the definition of $\hat{\beta}_t^i$, \eqref{eq:E5} follows from Lemma~\ref{lemma:2}, \eqref{eq:E6} follows from Lemma~\ref{lemma:3}. However, this leads to a contradiction since $(\beta^*,\mu^*)$ is a PBE of the game.
\end{IEEEproof}

\section{Proof of Theorem~\ref{thih}} \label{app:ih}

	We divide the proof into two parts: first we show that the value function $ V^i $ is at least as big as any reward-to-go function; secondly we show that under the strategy $ \beta_i^\star $, reward-to-go is $ V^i $.	

\paragraph*{Part 1}
For any $ i \in \mN $, $ \beta^i $ define the following reward-to-go functions
\begin{subequations} \label{eqihr2g}
\begin{gather}
	W_t^{i,\beta^i}(h_t^i) = \mE^{\beta^i,\beta^{-i,\star},\mu_t^\star[h_t^c]} \lpr \sum_{n=t}^\infty \delta^{n-t} R^i(X_n,A_n) \mid h_t^i \rpr
\end{gather}\vspace{-4ex}
\begin{gather}
	W_t^{i,\beta^i,T}(h_t^i) = \mE^{\beta^i,\beta^{-i,\star},\mu_t^\star[h_t^c]} \lpr \sum_{n=t}^T \delta^{n-t} R^i(X_n,A_n)
	+  \delta^{T+1-t} V^{i}(\underline{\Pi}_{T+1},X^i_{T+1}) \mid h_t^i \rpr.
\end{gather}
\end{subequations}
Since $ \mX^i,\mA^i $ are finite sets the reward $ R^i $ is absolutely bounded, the reward-to-go $ W_t^{i,\beta^i}(h_t^i) $ is finite $ \forall $ $ i,t,\beta^i,h_t^i $.

For any $ i \in \mN $, $ h_t^i \in \mathcal{H}_t^i $,
\begin{multline}\label{eqdc}
V^i\big(\underline{\mu}_t^\star[h_t^c],x_t^i\big) - W_t^{i,\beta^i}(h_t^i)
= \Big[ V^i\big(\underline{\mu}_t^\star[h_t^c],x_t^i\big) - W_t^{i,\beta^i,T}(h_t^i) \Big]
+ \Big[ W_t^{i,\beta^i,T}(h_t^i) - W_t^{i,\beta^i}(h_t^i) \Big]
	\end{multline}
Combining results from Lemmas~\ref{thmfh2} and~\ref{lemfhtoih} in Appendix~\ref{app:interm_ih}, %
the term in the first bracket in RHS of~\eqref{eqdc} is non-negative. Using~\eqref{eqihr2g}, the term in the second bracket is
\begin{gather}\label{eqdiff}	
\left( \delta^{T+1-t} \right) \mE^{\beta^i,\beta^{-i,\star},\mu_t^\star[h_t^c]} \Big\{- \sum_{n=T+1}^\infty \delta^{n-(T+1)} R^i(X_n,A_n)
+ V^{i}(\underline{\Pi}_{T+1},X^i_{T+1}) \mid h_t^i \Big\}.
\end{gather} 	
The summation in the expression above is bounded by a convergent geometric series. Also, $ V^i $ is bounded. Hence the above quantity can be made arbitrarily small by choosing $ T $ appropriately large. Since the LHS of~\eqref{eqdc} does not depend on $ T $, this results in
\begin{gather}
V^i\big(\underline{\mu}_t^\star[h_t^c],x_t^i\big) \ge W_t^{i,\beta^i}(h_t^i).
\end{gather}

\paragraph*{Part 2}
Since the strategy $ \beta^\star $ generated in~\eqref{eqbeta} is such that $\beta^{i,\star}_t $ depends on $ h_t^i $ only through $ \underline{\mu}_t^\star[h_t^c] $ and $ x_t^i $, the reward-to-go $ W_t^{i,\beta^{i,\star}} $, at strategy $ \beta^\star $, can be written (with abuse of notation) as
\begin{gather}
W_t^{i,\beta^{i,\star}}(h_t^i) = W_t^{i,\beta^{i,\star}}(\underline{\mu}_t^\star[h_t^c],x_t^i)
= \mE^{\beta^{\star},\mu_t^\star[h_t^c]} \lpr \sum_{n=t}^\infty \delta^{n-t} R^i(X_n,A_n) \mid \underline{\mu}_t^\star[h_t^c],x_t^i \rpr.
\end{gather}

For any $ h_t^i \in \mathcal{H}_t^i $,
\begin{subequations}
	\begin{multline}
	W_t^{i,\beta^{i,\star}}(\underline{\mu}_t^\star[h_t^c],x_t^i)
	= \mE^{\beta^{\star},\mu_t^\star[h_t^c]} \lpr R^i(X_t,A_t)	+ \delta W_{t+1}^{i,\beta^{i,\star}}
	\big(\underline{F}(\underline{\mu}_t^{\star}[h_t^c],\theta[\underline{\mu}_t^\star[h_t^c]],A_{t+1}),X_{t+1}^i\big)  \mid \underline{\mu}_t^\star[h_t^c],x_t^i \rpr
	\end{multline}\vspace{-3ex}
	\begin{multline}
	V^{i}(\underline{\mu}_t^\star[h_t^c],x_t^i)
	= \mE^{\beta^{\star},\mu_t^\star[h_t^c]} \Big\{ R^i(X_t,A_t) + \delta V^{i}
	\big(\underline{F}(\underline{\mu}_t^{\star}[h_t^c],\theta[\underline{\mu}_t^\star[h_t^c]],A_{t+1}),X_{t+1}^i\big)  \mid \underline{\mu}_t^\star[h_t^c],x_t^i \Big\}.
	\end{multline}
\end{subequations}
Repeated application of the above for the first $ n $ time periods gives
\begin{subequations}
	\begin{multline}
	W_t^{i,\beta^{i,\star}}(\underline{\mu}_t^\star[h_t^c],x_t^i)
	= \mE^{\beta^{\star},\mu_t^\star[h_t^c]} \Bigg\{ \sum_{m=t}^{t+n-1} \delta^{m-t} R^i(X_t,A_t)
	+ \delta^{n}  W_{t+n}^{i,\beta^{i,\star}}\big(\underline{\Pi}_{t+n},X_{t+n}^i\big)  \mid \underline{\mu}_t^\star[h_t^c],x_t^i \Bigg\}
	\end{multline}\vspace{-7ex}
	\begin{multline}
\\
	V^{i}(\underline{\mu}_t^\star[h_t^c],x_t^i)
	= \mE^{\beta^{\star},\mu_t^\star[h_t^c]} \Bigg\{ \sum_{m=t}^{t+n-1} \delta^{m-t} R^i(X_t,A_t)
	+ \delta^{n}  V^{i}\big(\underline{\Pi}_{t+n},X_{t+n}^i\big)  \mid \underline{\mu}_t^\star[h_t^c],x_t^i \Bigg\}.
	\end{multline}
\end{subequations}
Here $ \underline{\Pi}_{t+n} $ is the $ n-$step belief update under strategy and belief prescribed by $\beta^\star,\mu^\star$.

Taking differences results in
	\begin{multline}
	W_t^{i,\beta^{i,\star}}(\underline{\mu}_t^\star[h_t^c],x_t^i)  - V^{i}(\underline{\mu}_t^\star[h_t^c],x_t^i)
	\\
	=\delta^n \mE^{\beta^{\star},\mu_t^\star[h_t^c]}
	  \lpr W_{t+n}^{i,\beta^{i,\star}}\big(\underline{\Pi}_{t+n},X_{t+n}^i\big)
	- V^{i}\big(\underline{\Pi}_{t+n},X_{t+n}^i\big) \mid \underline{\mu}_t^\star[h_t^c],x_t^i \rpr.
	\end{multline}
Taking absolute value of both sides then using Jensen's inequality for $ f(x) = \vert x \vert $ and finally taking supremum over $ h_t^i $ reduces to
\begin{multline}
\sup_{h_t^i} \big\vert W_t^{i,\beta^{i,\star}}(\underline{\mu}_t^\star[h_t^c],x_t^i)  - V^{i}(\underline{\mu}_t^\star[h_t^c],x_t^i) \big\vert 
\\
\le \delta^n \sup_{h_t^i}  \mE^{\beta^{\star},\mu_t^\star[h_t^c]}
 \lpr\big\vert W_{t+n}^{i,\beta^{i,\star}}(\underline{\Pi}_{t+n},X_{t+n}^i)
 - V^{i}(\underline{\mu}_t^\star[h_t^c],x_t^i) \big\vert  \mid \underline{\mu}_t^\star[h_t^c],x_t^i \rpr.
\end{multline}
Now using the fact that $ W_{t+n},V^i $ are bounded and that we can choose $ n $ arbitrarily large, we get $ \sup_{h_t^i} \vert W_t^{i,\beta^{i,\star}}(\underline{\mu}_t^\star[h_t^c],x_t^i)  - V^{i}(\underline{\mu}_t^\star[h_t^c],x_t^i) \vert = 0 $.

\section{Intermediate Lemma used in Proof of Theorem~\ref{thih}} \label{app:interm_ih}

In this section, we present four lemmas. Lemma~\ref{thmfh1} and~\ref{lemcond} are intermediate technical results needed in the proof of Lemma~\ref{thmfh2}. Then the results in Lemma~\ref{thmfh2} and~\ref{lemfhtoih} are used in Appendix~\ref{app:ih} for the proof of Theorem~\ref{thih}. The proofs for Lemma~\ref{thmfh1} and \ref{lemcond} below aren't stated as they are analogous (the only difference being a non-zero terminal reward in the finite horizon model) to the proofs of Lemma~\ref{lemma:2} and \ref{lemma:3}, from Appendix~\ref{app:lemmas}, used in the proof of Theorem~\ref{Thm:Main}.

Define the reward-to-go $ W_t^{i,\beta^i,T} $ for any agent $ i $ and strategy $ \beta^i $  as
\begin{equation}\label{eqr2gfh}
W_t^{i,\beta^i,T}(h_t^i) = \mE^{\beta^i,\beta^{-i,\star},\mu_t^\star[h_t^c]} \big[ \sum_{n=t}^T \delta^{n-t} R^i(X_n,A_n)
+ \delta^{T+1-t} G^{i}(\underline{\Pi}_{T+1},X^i_{T+1}) \mid h_t^i \big].
\end{equation}
Here agent $ i $'s strategy is $ \beta^i $ whereas all other agents use strategy $ \beta^{-i,\star} $ defined above. Since $ \mX^i,\mA^i $ are assumed to be finite and $ G^i $ absolutely bounded, the reward-to-go is finite $ \forall $ $ i,t,\beta^i,h_t^i $.

In the following, any quantity with a $T$ in the superscript refers the finite horizon model with terminal reward $G^i$. For further discussion, please refer to the comments after the statement of Theorem~\ref{thih}.

\begin{lemma}\label{thmfh1}
	For any $ t \in \mathcal{T} $, $ i \in \mN $, $ h_t^i $ and $ \beta^i $,	
	\begin{equation} \label{eqintlem}
	V_t^{i,T}(\underline{\mu}_t^\star[h_t^c],x_t^i) \ge \mE^{\beta^i,\beta^{-i,\star},\mu_t^\star[h_t^c]} \big[ R^i(X_t,A_t)
	+ \delta V_{t+1}^{i,T}\big( \underline{F}(\underline{\mu}_t^{\star}[h_t^c],\beta_t^{\star}(\cdot|\underline{\mu}_t^*[h_t^c],\cdot),A_t) , X_{t+1}^i \big) \mid h_t^i \big].
	\end{equation}
\end{lemma}

\begin{lemma} \label{lemcond}
	\begin{multline}
	\mE^{\beta^i_{t+1:T},\beta_{t+1:T}^{-i,\star},\mu_{t+1}^\star[h_t^c,a_t]} \big[ \sum_{n=t+1}^T \delta^{n-(t+1)} R^i(X_n,A_n)
	+ \delta^{T+1-t} G^i(\underline{\Pi}_{T+1},X_{T+1}^i) \mid h_t^i,a_t,x_{t+1}^i \big]
	\\
	= \mE^{\beta^i_{t:T},\beta_{t:T}^{-i,\star},\mu_{t}^\star[h_t^c]} \big[ \sum_{n=t+1}^T \delta^{n-(t+1)} R^i(X_n,A_n)
	+ \delta^{T+1-t} G^i(\underline{\Pi}_{T+1},X_{T+1}^i) \mid h_t^i,a_t,x_{t+1}^i \big].
	\end{multline}
\end{lemma}
The result below shows that the value function from the backwards recursive algorithm is higher than any reward-to-go.

\begin{lemma}\label{thmfh2}
	For any $ t \in \mathcal{T} $, $ i \in \mN $, $ h_t^i $ and $ \beta^i $,	
	\begin{gather}
	V_t^{i,T}(\underline{\mu}_t^\star[h_t^c],x_t^i) \ge W_t^{i,\beta^i,T}(h_t^i).
	\end{gather}
\end{lemma}
\begin{IEEEproof}
We use backward induction for this. At time $ T $, using the maximization property from~\eqref{eq:m_FP} (modified with terminal reward $ G^i $),
\begin{subequations}
	\begin{align}
	&V_T^{i,T}(\underline{\mu}_T^\star[h_T^c],x_T^i)
	\\
	&\triangleq \mE^{\tilde{\gamma}_T^{i,T}(\cdot \mid x_T^i),\tilde{\gamma}_T^{-i,T},\mu_T^\star[h_t^c]} \big[ R^i(X_T,A_T) + \delta G^i\big( \underline{F}(\underline{\mu}_T^{\star}[h_T^c],\tilde{\gamma}_T^{T},A_T),X_{T+1}^i \big) \mid \underline{\mu}_T^\star[h_T^c],x_T^i \big]
	\\
	&\ge \mE^{{\gamma}_T^{i,T}(\cdot \mid x_T^i),\tilde{\gamma}_T^{-i,T},\mu_T^\star[h_t^c]} \big[ R^i(X_T,A_T)
	+ \delta G^i\big( \underline{F}(\underline{\mu}_T^{\star}[h_T^c],\tilde{\gamma}_T^{T},A_T) ,X_{T+1}^i \big) \mid \underline{\mu}_T^\star[h_T^c],x_T^i \big]
	\\
	&= W_T^{i,\beta^i,T}(h_T^i)
	\end{align}
\end{subequations}
Here the second inequality follows from~\eqref{eq:m_FP} and~\eqref{eq:Vdef} and the final equality is by definition in~\eqref{eqr2gfh}.

Assume that the result holds for all $ n \in \{t+1,\ldots,T\} $, then at time $ t $ we have
\begin{subequations}
	\begin{align}
	&V_t^{i,T}(\underline{\mu}_t^\star[h_t^c],x_t^i)
	\\
	&\ge \mE^{\beta_t^i,\beta_t^{-i,\star},\mu_t^\star[h_t^c]} \big[ R^i(X_t,A_t)
	+ \delta V_{t+1}^{i,T}\big( \underline{F}(\underline{\mu}_t^{\star}[h_t^c],\beta_t^{\star}(\cdot|\underline{\mu}_t^*[h_t^c],\cdot),A_t) , X_{t+1}^i \big) \mid h_t^i \big]
	\\
	&\ge \mE^{\beta_t^i,\beta_t^{-i,\star},\mu_t^\star[h_t^c]} \big[ R^i(X_t,A_t)
	+ \delta \mE^{\beta^i_{t+1:T},\beta_{t+1:T}^{-i,\star},\mu_{t+1}^\star[h_t^c,A_t]} \big[ \sum_{n=t+1}^T \delta^{n-(t+1)} R^i(X_n,A_n)
	\\ \nonumber
	&+ \delta^{T-t} G^i(\underline{\Pi}_{T+1},X_{T+1}^i) \mid h_t^i,A_t,X_{t+1}^i \big] \mid h_t^i \big]
	\\
	&= \mE^{\beta^i_{t:T},\beta^{-i,\star}_{t:T},\mu_t^\star[h_t^c]} \big[ \sum_{n=t}^T \delta^{n-t} R^i(X_n,A_n)
	+ \delta^{T+1-t}G^i(\underline{\Pi}_{T+1},X_{T+1}^i) \mid h_t^i \big]
	\\
	&= W_t^{i,\beta^i,T}(h_t^i)
	\end{align}
\end{subequations}
Here the first inequality follows from Lemma~\ref{thmfh1}, the second inequality from the induction hypothesis, the third equality follows from Lemma~\ref{lemcond} and the final equality by definition~\eqref{eqr2gfh}.
\end{IEEEproof}

The following result highlights the similarities between the fixed-point equation in infinite horizon and the backwards recursion in the finite horizon.

\begin{lemma}\label{lemfhtoih}
	Consider the finite horizon game with $ G^i \equiv V^i $. Then $ V_t^{i,T} = V^i $,  $ \forall $ $ i \in \mN $, $ t \in \{1,\ldots,T\} $ satisfies the backwards recursive construction stated above (adapted from \eqref{eq:m_FP} and \eqref{eq:Vdef}).

\end{lemma}	
\begin{IEEEproof}
	Use backward induction for this. Consider the finite horizon algorithm at time $ t=T $, noting that $ V_{T+1}^{i,T} \equiv G^i \equiv  V^i $,
	\begin{subequations} \label{eqfhT}
		\begin{align} 	
		\tilde{\gamma}_T^{i,T}(\cdot \mid x_T^i) &\in\!\!\!\!\! \argmax_{\gamma_T^i(\cdot \mid x_T^i) \in \Delta(\mA^i)} \!\!\! \mE^{\gamma_T^i(\cdot \mid x_T^i),\tilde{\gamma}_T^{-i,T},\upi_T^{-i}} \big[ R^i(X_T,A_T)
		+ \delta V^i\big( \underline{F}(\underline{\pi}_T, \tilde{\gamma}_T^{T}, A_T) , X_{T+1}^i \big) \mid \upi_T,x_T^i \big]
		\\
		V_T^{i,T}(\upi_T,x_T^i) &= \mE^{\tilde{\gamma}_T^{i,T}(\cdot \mid x_T^i),\tilde{\gamma}_T^{-i,T},\upi_T^{-i}} \big[ R^i(X_T,A_T)
		+ \delta V^i\big( \underline{F}(\underline{\pi}_T, \tilde{\gamma}_T^{T}, A_T) , X_{T+1}^i \big) \mid \upi_T,x_T^i \big].
		\end{align}
	\end{subequations}
	Comparing the above set of equations with~\eqref{eqihfpe}, we can see that the pair $ (V,\tilde{\gamma}) $ arising out of~\eqref{eqihfpe} satisfies the above. Now assume that $ V_n^{i,T} \equiv V^i $ for all $ n \in \{t+1,\ldots,T\} $. At time $ t $, in the finite horizon construction from~\eqref{eq:m_FP},~\eqref{eq:Vdef}, substituting $ V^i $ in place of $ V_{t+1}^{i,T} $ from the induction hypothesis, we get the same set of equations as~\eqref{eqfhT}. Thus $ V_t^{i,T} \equiv V^i $ satisfies it.
\end{IEEEproof}

\section{Proof of Theorem~\ref{thm:exist}} \label{app:existence}

	Denote the vector correspondence defined by the RHS of~\eqref{eqpp4} by
	\begin{gather}
	\phi(\underline{x}) = \begin{pmatrix}
	\phi_1(\underline{x}) \\
	\vdots \\
	\phi_4(\underline{x})
	\end{pmatrix} = \begin{pmatrix}
	\argmax_a a f_1(\underline{x}) \\
	\vdots \\
	\argmax_d d f_4(\underline{x})
	\end{pmatrix}
	\end{gather}
	where $ \underline{x} = (x,y,w,z) $. For any $ \underline{x} \in [0,1]^4 $, $ \phi(\underline{x}) $ is non-empty and closed, since the $ \argmax $ solution always exists and is one of $ \{0\},\{1\}, [0,1] $. If in addition $ \phi $ also has a \emph{closed graph} then by Kakutani Fixed Point Theorem there exists a solution to~\eqref{eqpp4}.
	
	Consider any sequence $ (\underline{x}_n,a_n,b_n,c_n,d_n) \rightarrow (\underline{x}_0,a_0,b_0,c_0,d_0) $ such that $ \forall $ $ n \ge 1 $,
	\begin{subequations} \label{eqr}
		\begin{gather}
		a_n \in \argmax_{a \in [0,1]} a f_1(\underline{x}_n), 
		\qquad 
		b_n \in \argmax_{b \in [0,1]} b f_2(\underline{x}_n),
		\\
		c_n \in \argmax_{c \in [0,1]} c f_3(\underline{x}_n),
		\qquad 
		d_n \in \argmax_{d \in [0,1]} d f_4(\underline{x}_n).	
		\end{gather}
	\end{subequations}
	We need to verify that~\eqref{eqr} also holds for the limit $ (\underline{x}_0,a_0,b_0,c_0,d_0) $.
	If $ \underline{x}_0 \notin \mathcal{D} $ then due to continuity, \eqref{eqr} indeed holds at the limit.
	For $ \underline{x}_0 \in \mathcal{D} $, for any $ i \in S(\underline{x}_0) $ if $ f_i(\underline{x}_0) = 0 $ then in the relation to be verified, the requirement is either of $ a_0,b_0,c_0,d_0 \in [0,1] $, which is always true.
	For $ \underline{x}_0 \in \mathcal{D}_1 \cap \mathcal{D}_2^\complement \cap \mathcal{D}_3^\complement \cap \mathcal{D}_4^\complement $, if $ f_1(\underline{x}_0) > 0 $ then for any sequence $ \underline{x}_n \rightarrow \underline{x}_0 $, for large $ n $ the points in the sequence are within $ B_\epsilon(\underline{x}_0) $ and thus $ f_1(\underline{x}_n) > 0 $ for large $ n $. This means that the relation from~\eqref{eqr} holds at the limit (noting that $ f_2,f_3,f_4 $ are continuous at $ \underline{x}_0 $ in this case).
	
	Similarly if $ f_1(\underline{x}_0) < 0 $ and for any $ \underline{x}_0 \in \mathcal{D}_1^\complement \cap \mathcal{D}_2 \cap \mathcal{D}_3^\complement \cap \mathcal{D}_4^\complement $.
	For $ \underline{x}_0 \in \mathcal{D}_1 \cap \mathcal{D}_2 \cap \mathcal{D}_3^\complement \cap \mathcal{D}_4^\complement $ if $ f_1(\underline{x}_0) > 0 $ and $ f_2(\underline{x}_0) < 0 $ then there exists an $ \epsilon > 0 $ such that $ \forall $ $ \underline{x} \in B_\epsilon(\underline{x}_0) $ we have $ f_1(\underline{x}) > 0 $ and $ f_2(\underline{x}) < 0 $. From this it follows that the relation~\eqref{eqr} holds at the limit. Similar argument works for any other sign combination of $ f_1,f_2,f_3,f_4 $.
	
	The two arguments above cover all cases.

%
\bibliographystyle{IEEEtran}

\end{document}